\newcommand{\qedhere}{}
\tikzset{>=stealth}
\newcommand{\mc}{\mathscr}
\newcommand{\mb}{\mathbb}
\newcommand{\ralg}{\mathbb{R}_{\rm alg}}
\newcommand{\n}{${}$\newline}
\newcommand{\figsz}{\footnotesize}
\newcommand{\eps}{\varepsilon}
\renewcommand{\vec}{\mathbf}
\newtheorem{thrm}{Theorem}[section]
\newtheorem{lem}[thrm]{Lemma}
\newtheorem{prop}[thrm]{Proposition}
\newcommand{\df}[1]{\textbf{\textit{\color{cyan!10!black} #1}}}
\DeclareMathOperator{\conv}{conv}
\DeclareMathOperator{\cone}{cone}
\DeclareMathOperator{\ncone}{ncone}
\DeclareMathOperator{\face}{face}
\DeclareMathOperator{\nfan}{nfan}
\DeclareMathOperator{\linspan}{span}
\DeclareMathOperator{\vis}{vis}
\DeclareMathOperator{\projcl}{cl_\mb{P}}
\DeclareMathOperator{\proj}{proj}
\DeclareMathOperator{\trans}{trans}
\DeclareMathOperator{\side}{side}
\DeclareMathOperator{\labl}{indx}
\DeclareMathOperator{\bal}{balance}
\DeclareMathOperator{\trmr}{trans}
\DeclareMathOperator{\tent}{tent}
\DeclareMathOperator{\pris}{pris}
\DeclareMathOperator{\pyr}{pyr}
\DeclareMathOperator{\conn}{conn}
\DeclareMathOperator{\lamp}{lamp}
\DeclareMathOperator{\adapt}{adapt}
\DeclareMathOperator{\anch}{anch}
\newcommand*{\projeq}{%
  \mathrel{\vcenter{\offinterlineskip
  \hbox{{\tiny{\rm proj}}}\vskip+1pt\hbox{\scalebox{1.5}[1]{$\sim$}}}}}
\newcommand*{\combeq}{%
  \mathrel{\vcenter{\offinterlineskip
  \hbox{{\tiny{\rm comb}}}\vskip+1pt\hbox{\scalebox{2}[1]{\hskip-0pt$\sim$}}}}}
\newcommand*{\catprod}{\times}
\newcommand{\cv}{\mathrel{\hbox{\ooalign{$\cup$\cr\hidewidth\hbox{$\cdot\mkern5mu$}\cr}}}}
\newcommand{\Cv}{\mathrel{\hbox{\ooalign{$\bigcup$\cr\hidewidth\hbox{$\cdot\mkern9mu$}\cr}}}}
\newcommand{\nin}{\not\in }
\newcommand{\pentagon}{
\begin{tikzpicture}
\draw (90:3pt) -- (162:3pt) -- (234:3pt) -- (306:3pt) -- (378:3pt) -- cycle;
\end{tikzpicture}{} }
\newcommand{\bfpentagon}{ 
\begin{tikzpicture}
\draw[very thick] (90:3pt) -- (162:3pt) -- (234:3pt) -- (306:3pt) -- (378:3pt) -- cycle;
\end{tikzpicture} }
\newcommand{\subpentagon}{
\begin{tikzpicture}
\draw (90:2pt) -- (162:2pt) -- (234:2pt) -- (306:2pt) -- (378:2pt) -- cycle;
\end{tikzpicture} }
\newcommand{\cube}{
{\mkern0.5mu \begin{tikzpicture}[scale=0.1, baseline= -2pt]
\draw[gray] (-1,-1) -- (0,0) -- (2,0) 
            (0,0) -- (0,2);
\draw (1,1) -- (2,2) -- (2,0) -- (1,-1) -- (-1,-1) -- (-1,1) -- (0,2) -- (2,2)
      (1,-1) -- (1,1) -- (-1,1);
\end{tikzpicture} \mkern0.5mu}{} }
\newcommand{\subcube}{
{\mkern2mu \begin{tikzpicture}[scale=0.06, baseline= -1pt]
\draw[gray] (-1,-1) -- (0,0) -- (2,0) 
            (0,0) -- (0,2);
\draw (1,1) -- (2,2) -- (2,0) -- (1,-1) -- (-1,-1) -- (-1,1) -- (0,2) -- (2,2)
      (1,-1) -- (1,1) -- (-1,1);
\end{tikzpicture} \mkern2mu} }
\begin{document}

\title{%
Antiprismless, \\
or: Reducing Combinatorial Equivalence to Projective Equivalence in Realizability Problems for Polytopes
}

\author{Michael Gene Dobbins 
\thanks{Department of Mathematical Sciences, Binghamton University (SUNY),
Binghamton, NY, USA. \newline
\texttt{michaelgenedobbins@gmail.com} \newline
This research was supported by NRF grant 2011-0030044 (SRC-GAIA) funded by the government of Korea.\newline
}}

\date{ }








\maketitle


\begin{abstract}
This article exhibits a 4-dimensional combinatorial polytope that has no antiprism, answering a question posed by Bernt Lindst\"om.  
As a consequence, any realization of this combinatorial polytope has a face that it cannot rest upon without toppling over. 
To this end, we provide a general method for solving a broad class of realizability problems.  Specifically, we show that for any semialgebraic property that faces inherit, the given property holds for some realization of every combinatorial polytope if and only if the property holds from some projective copy of every polytope. 
The proof uses the following result by Below.  Given any polytope with vertices having algebraic coordinates, there is a combinatorial ``stamp'' polytope with a specified face that is projectively equivalent to the given polytope in all realizations.  
Here we construct a new stamp polytope that is closely related to Richter-Gebert's proof of universality for 4-dimensional polytopes, and we generalize several tools from that proof.
\end{abstract}

\section{Introduction}

The combinatorial type of a polytope is defined by the partial ordering of its face lattice. 
We generally ``see'' a partial ordering by drawing its Hasse diagram;  
see Figure \ref{fig:trifl}.
If we draw the Hasse diagram of the face lattice of a polytope, 
we may observe that it resembles the 1-skeleton of a larger polytope.  
For example, the Hasse diagram of a simplex's face lattice is the 1-skeleton of a hypercube standing on a vertex. 
The 1-skeleton alone does not uniquely determine the combinatorial type of a polytope, but there is a natural extension of the Hesse diagram that does, the intervals of a poset ordered by inclusion. 
When the original poset is a combinatorial polytope, 
the resulting poset of intervals shares some basic properties with combinatorial polytopes, such as being a lattice and satisfying Euler's formula \cite{lindstrom1971realization}.

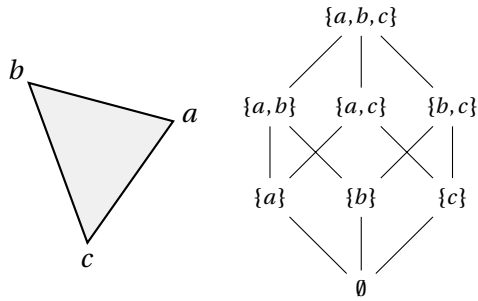
\begin{figure}[!h]

\begin{center}
\begin{tikzpicture}[scale=1.2]

 \path (-2,1.5) coordinate (t);
 \path (t) +(20:1) coordinate (a);
 \path (t) +(130:1) coordinate (b);
 \path (t) +(270:1) coordinate (c);
 
\filldraw[thick,fill=black!6] (a) -- (c) -- (b) -- cycle;

 \path (t) +(20:1.2) coordinate (a);
 \path (t) +(130:1.2) coordinate (b);
 \path (t) +(270:1.2) coordinate (c);

\node at (a) {$a$}; 
\node at (b) {$b$}; 
\node at (c) {$c$};

 \node at (1,3) (abc) {\scalebox{.8}{$\{a,b,c\}$}};
 \node at (0,2) (ab) {\scalebox{.8}{$\{a,b\}$}};
 \node at (1,2) (ac) {\scalebox{.8}{$\{a,c\}$}};
 \node at (2,2) (bc) {\scalebox{.8}{$\{b,c\}$}};
 \node at (0,1) (a) {\scalebox{.8}{$\{a\}$}};
 \node at (1,1) (b) {\scalebox{.8}{$\{b\}$}};
 \node at (2,1) (c) {\scalebox{.8}{$\{c\}$}};
 \node at (1,0) (o) {\scalebox{.8}{$\emptyset$}};
 
 \draw (o) -- (a) -- (ac) -- (c) -- (o)
 (b) -- (ab) -- (abc) -- (bc) -- (b)
 (ac) -- (abc)
 (a) -- (ab)
 (c) -- (bc)
 (o) -- (b);  

\end{tikzpicture}

\caption{ A triangle and a Hasse diagram of its face lattice.}\label{fig:trifl}
\end{center}

\end{figure}

In 1971 Lindstr\"om asked whether the intervals of a polytope's face lattice always form a new combinatorial polytope \cite{lindstrom1971problem}.  
In this article, we will see that this is not the case.  Moreover, we will construct a 4-polytope such that the poset of intervals of its face lattice is not the combinatorial type of any polytope. 
An equivalent question appears in Gr\"unbaum's text book \cite{grunbaum2003convex} and has applications in linear optimization \cite{broadie1985antiprisms}. 
Namely, does every polytope have an antiprism?  An antiprism is the combinatorial dual of the interval polytope (see Figure~\ref{fig:cubeantiprism}). 
Anders Bj\"orner announced the answer in 3 dimensions; every 3-polytope does have an antiprism \cite{bjorner1997antiprism}, but this result remains unpublished. 

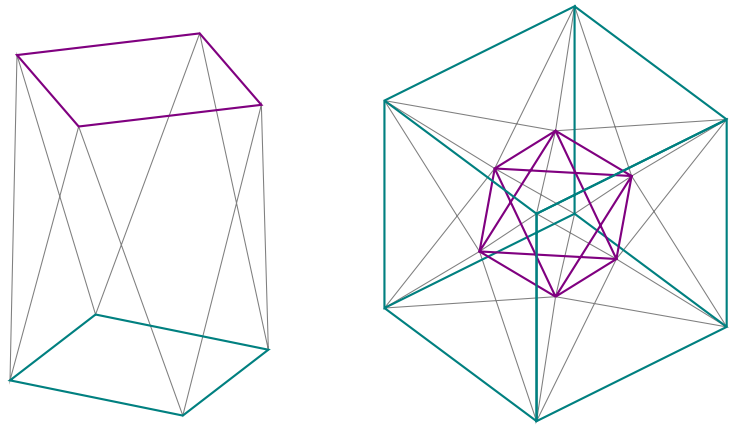
\begin{figure}[ht]
\centering

\begin{tikzpicture}

\matrix[column sep=1.5cm]
{

\begin{scope}[scale=1.4,x={(1cm,.3cm)},z={(0cm,2.7cm)},y={(.8cm,-.4cm)}]

\path
(0,0,-.2)
+(20:1) coordinate (a) 
+(110:1) coordinate (b) 
+(20:-1) coordinate (c) 
+(110:-1) coordinate (d) 
(0,0,.8)
+(65:1) coordinate (e) 
+(155:1) coordinate (f) 
+(65:-1) coordinate (g) 
+(155:-1) coordinate (h) 
;

\draw[black!50]
(a) -- (e) -- (b) -- (f) -- (c) -- (g) -- (d) -- (h) -- cycle
;

\draw[thick,teal]
(a) -- (b) -- (c) -- (d) -- cycle
;
\draw[thick,violet]
(e) -- (f) -- (g) -- (h) -- cycle
;

\end{scope}

&

\begin{scope}[scale=2.5,x={(1cm,.5cm)},y={(0cm,1.1cm)},z={(.8cm,-.6cm)}]

\path
(0,0,0) coordinate (a) -- (1,0,0) coordinate (b) -- (1,1,0) coordinate (c) -- (0,1,0) coordinate (d) -- cycle
(0,0,1) coordinate (e) -- (1,0,1) coordinate (f) -- (1,1,1) coordinate (g) -- (0,1,1) coordinate (h) -- cycle
;
\path
(.5,.1,.5) coordinate (i) ++(0,.8,.0) coordinate (j) 
(.1,.5,.5) coordinate (k) ++(.8,0,0) coordinate (l) 
(.5,.5,.1) coordinate (m) ++(0,0,.8) coordinate (n) 
;

\draw[black!50]
(i) -- (a) -- (m) -- (b) -- (i)
(i) -- (e) -- (n) -- (f) -- (i)
(j) -- (c) -- (l) -- (g) -- (j)
(j) -- (d) -- (k) -- (h) -- (j)
(a) -- (k) -- (e)
(b) -- (l) -- (f)
(c) -- (m) -- (d)
(g) -- (n) -- (h)
;
\draw[thick,teal]
(a) -- (b) -- (c) -- (d) -- cycle
(e) -- (f) -- (g) -- (h) -- cycle
(a) -- (e)
(b) -- (f)
(c) -- (g)
;
\draw[thick,violet]
(i) -- (k) -- (j) -- (l) -- (i)
(i) -- (m) -- (j) -- (n) -- (i)
(m) -- (k) -- (n) -- (l) -- (m)
;
\draw[thick,teal]
(d) -- (h)
(g) -- (h) -- (e)
;

\end{scope}

\\
};

\end{tikzpicture}
\caption{\textsc{Left:} The antiprism of a square. \textsc{Right:} The antiprism of a cube.}\label{fig:cubeantiprism}
\end{figure}

Broadie gave sufficient conditions for a polytope to have an antiprism \cite{broadie1985antiprisms}. 
These conditions ask for a perfectly centered realization of the original polytope \cite{fukudaweibel2007fvectors}.  
Perfectly centered has a nice physical interpretation; it says a polytope with a specified center of mass 
can rest on any face without toppling over; we may consider a polytope filled with some inhomogeneous material. 
Since every 3-polytope has an antiprism, every combinatorial 3-polytope has a realization that can rest on each of its faces. 
Here, we will give both necessary and sufficient conditions for the realizability of a polytope's antiprism,  
then construct a 4-polytope without an antiprism.  Consequently, every realization of this 4-polytope has some face on which it cannot rest.  Note, however, that the face on which it cannot rest is not necessarily a facet. 

As part of this construction, we provide a technique for answering questions of the following form. 
Does a certain geometric property hold for some realization of every combinatorial polytope? 
Such questions are made difficult by the universality theorem for polytopes.  The universality theorem states that for any primary basic semialgebraic set $X$, there exists a poset with realization space (modulo isometries) that is homotopy equivalent to $X$.  
J{\"u}rgen Richter-Gebert showed that universality holds even for polytopes of dimension 4 \cite{richter1996realization}. 
As a consequence, 
searching for a realization of a certain combinatorial polytope that satisfies a certain geometric property, can be as hard as searching for a point in a semialgebraic set.  This may be difficult, since a semialgebraic set may be disconnected or have holes or other unwanted features for a search space. 

For a broad class of properties of polytopes, 
we show that the problem of determining whether such a property 
holds for some realization of every combinatorial type can be reduced to determining whether the property holds for some realization of every projective type.  
This is a considerable improvement since, in contrast to the realization spaces of polytopes with fixed combinatorial type, the space of polytopes (up to isometry) with fixed projective type is convex.  
We will also see that, 
when such a realization does not always exist, 
there is a gap of at most 2 between the lowest dimension where this fails for combinatorial types and the lowest dimension where it fails for projective types. 

For a geometric property that is general enough to be relevant in any dimension, if the property holds for a polytope, then in many cases, it holds for the polytope's faces as well.  
We say faces projectively inherit a property when, for any face of a polytope with this property, some projective copy of the face has the property.  
Theorem \ref{thrm:log_comb=proj} gives a reduction from combinatorial type to projective type for semialgebraic properties that faces projectively inherit, and Theorem \ref{thrm:alg_comb=proj} gives a reduction from combinatorial type to projective type for any propery faces projectively inherit provided we restrict ourselves to polytopes with vertices having algebraic coordinates. 

An example of such a property is, ``The polytope's vertices have rational coordinates''.  Trivially, if this is true of a polytope then it is also true of its faces. 
This reduces the question ``Can every combinatorial 4-polytope be realized with rational coordinates?'' to the question 
``Does every polygon with algebraic vertices have a projective copy with rational vertices?''.  Answering the first question was a considerable hurdle that paved the way for Gale duality and the universality theorem \cite{ziegler2008nonrational}.  The answer to the second question, however, is easily seen to be no.  Just consider the regular pentagon. 

\begin{figure}[ht]
\centering

\begin{tikzpicture}[scale=1.2]

\draw[thick]
(90:1) coordinate (a0)
\foreach \i in {1,2,3,4}
 { -- ({90+72*\i}:1) coordinate (a\i) }
-- cycle
;

\draw
(a2) -- (a0) -- (a3)
(a1) -- (a4)
;

\node[shift={(-.1,.1)},anchor=base] at (a1) {\figsz $a$};
\node[shift={(.8,.1)}, anchor=base] at (a1) {\figsz $b$};
\node[shift={(-.8,.1)},anchor=base] at (a4) {\figsz $c$};
\node[shift={(.1,.1)}, anchor=base] at (a4) {\figsz $d$};

\end{tikzpicture}
\caption{For any projective copy of a regular pentagon, the cross-ratio $(a,b\mathbin{|}c,d) = \frac{1+\sqrt{5}}{2}$ is irrational, so the vertices cannot have rational coordinates.}
\end{figure}
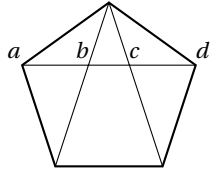


This reduction is a consequence of a construction we call a stamp of a polytope. 
In $\mathbb{R}^3$ it is known that faces of polyhedra are prescribable \cite{barnette1970preassigning}. That is, given a realization of a face of a combinatorial 3-polytope, it is always possible to extend this to a realization of the entire polytope. This does not hold in higher dimensions 
\cite{kleinschmidt1976facets}\cite{ziegler2007lecture}, and
a stamp gives the strongest possible violation of this for polytopes in general.  
A stamp is a combinatorial polytope that forces a specified face to have a fixed projective type in all realizations.  
In his unpublished thesis, Below constructed a stamp for any projective type of polytope 
having vertices with coordinates in the real algebraic completion of the rationals
\cite{below2002complexity}. 
This article will give a different stamp construction. 

The stamp construction presented here has the advantage that it closely follows Richter-Gebert's proof of universality for 4-polytopes \cite{richter1996realization}. 
The proof of the universality theorem encodes a system of polynomial constraints into a poset in such a way that a realization of the poset by a polytope corresponds to a solution of the given system.  
The stamp construction uses similar techniques to encode a system of polynomial constraints into a polytope and then force the coordinates of certain vertices to satisfy those polynomial constraints. 
While both proofs involve many technical details, the reader who is already familiar with Richter-Gebert's proof may easily recognize the modifications of that proof in the stamp construction and find it much more accessible.
Several lemmas are also of independent interest as generalizations of those in \cite{richter1996realization}.

{

\paragraph{Organization of the paper.}

Section \ref{algred} deals with the reduction from combinatorial equivalence to projective equivalence. 
Section \ref{antiprism} proves some basic results about prismoids, then shows that the existence of a balanced pair is equivalent to the existence of an antiprism, and finally presents a polytope without an antiprism.  Section \ref{stamp} constructs the stamp of a polytope. 
Sections \ref{algred} and \ref{antiprism} depend on Section \ref{stamp}, 
but Section \ref{stamp} is placed later in the text to spare the reader the extensive details of the construction of stamp polytopes until motivated by their use.   
Finally, Section~\ref{question} leaves the reader with some open questions.

\paragraph{Terminology and notation.}

In this article, we assume that a polytope's faces are indexed by a poset with the order of indices in the poset indicating containment of faces. 
We denote the face of a polytope $P$ that has index $f$ by $F = \face(P,f)$, and we may refer to $F$ as the face $f$ of $P$.  
We say polytopes are combinatorially equivalent (or have the same combinatorial type) when their faces are indexed by the same post, 
and we will makes use of the implied correspondence between their faces.   Note that a polytope may have non-trivial symmetry, and some properties considered depend on the indexing of the faces in a way that is not preserved by these symmetries.  

An invertible projective transformation on $\mb{R}^n$ is called a projectivity, and 
we say polytopes $P$ and $Q$ are projectively equivalent (or have the same projective type) when they are combinatorially equivalent their exists a projectivity $\pi$ such that for each index $f$, $\pi(\face(P,f)) = \face(Q,f)$.  Note that the restriction of $\varphi$ to $P$ is bounded, but does not necessarily preserve orientation.  We may also say $Q$ is a projective copy of $P$.

We denote column vectors by $[x_1; x_2; \dots; x_n]$, and we may include a set among the entries of a vector to indicate a Cartesian product.  For example, $[P;1] = \{[x_1;\dots;x_n;1] : [x_1;\dots;x_n] \in P \}$. 
A brief glossary of notation follows.  

\newlength{\myitemwidth}
\setlength{\myitemwidth}{1\textwidth}
\addtolength{\myitemwidth}{-1.5cm}
\begin{tabular}{l@{\, :\ }p{\myitemwidth}} 
$\ralg$ & The real algebraic closure of the rationals. \\
$f\wedge g$ & The meet of elements of a lattice; e.g.\ logical conjunction ``and''. \\
$f \vee g$ & The join of elements of a lattice; e.g.\ logical disjunction ``or''. \\
$\bot$ ($\top$) & The least (greatest) element of a bounded poset. \\ 
$\mc{P}^*$ & The poset $\mc{P}$ with order reversed. \\
$\mc{P} \catprod \mc{Q}$ & The categorical product of posets, 
$(a,x) \leq (b,y)$ when $a \leq b$ and $x \leq y$. \\
$\mc{P}_1 \wedge \mc{P}_2$ & The common refinement of a pair of sublattices. \\
$\labl(P)$ & The poset indexing the face lattice of a polytope $P$. \\
$\face(P,f)$ & The face labeled by $f \in \mc{P}$ of a polytope $P$ realizing the poset $\mc{P}$.  \\
$P^\circ$ & The relative interior of a set $P$. \\
$P^*$ & The polar of a centered polytope (or cone) $P$. \\
$P_1 \cv P_2$ & The convex join, $\conv(P_1 \cup P_2)$. \\ 
$\cone(P,f)$ & The cone over the face $f$ of $P$. \\
$\ncone(P,f)$ & The normal cone of the face $f$ of $P$. \\
$\nfan(P)$ & The normal fan of $P$. \\
$P \combeq Q$ & $P$ and $Q$ are combinatorially equivalent. \\
$[P]_\text{comb}$ & The class of polytopes that are combinatorially equivalent to $P$. \\
$P \projeq Q$ & $P$ and $Q$ are projectively equivalent. \\
$[P]_\text{proj}$ & The class of polytopes that are projectively equivalent to $P$. \\
\end{tabular}

}


\section{Combinatorial and Projective Equivalence} 
\label{algred}

Later we will see how to construct the stamp of an algebraic polytope, but in this section we consider the consequences of its existence.  For now, a \df{stamp} is the pair $(\mc{S}_P,f_P)$ implied by the following theorem.

\begin{thrm}\label{the:paf} 
Given an algebraic $d$-polytope $P$, 
there exists a combinatorial ${(d\! +\! 2)}$-polytope $\mc{S}_P$ with a specified face $f_P \in \mc{S}_P$ 
such that for any realization $S$ of $\mc{S}_P$, the specified face is projectively equivalent to the given polytope, $\face(S,f_P) \projeq P$.
\end{thrm}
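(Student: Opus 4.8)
The plan is to encode the algebraic coordinates of $P$ as the solution to a system of polynomial equations and inequalities, and then realize that system combinatorially inside a higher-dimensional polytope using the machinery developed in Section \ref{stamp}. The starting observation is that any point with coordinates in $\ralg$ is cut out, together with a bounded neighborhood, by a finite system of polynomial constraints with rational (indeed integer) coefficients; so the vertex coordinates of $P$ can be pinned down exactly by such a system. The heart of the construction is a ``gadget'' mechanism — addition, multiplication, and sign gadgets built from prescribed sub-configurations — that forces the coordinates of certain marked vertices to obey prescribed arithmetic relations in every realization. These are the tools that generalize Richter-Gebert's building blocks (the Lawrence-type extensions, the von Staudt constructions for $+$ and $\times$, and the mechanisms that transfer cross-ratios), which is exactly what the excerpt advertises.

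Concretely, I would proceed in the following steps. First, reduce to the case where $P$ sits inside a fixed bounded region, say $P \subset [0,1]^d$ shifted to have all coordinates strictly positive, so that the arithmetic gadgets (which typically operate on positive reals via cross-ratios on a line) apply without sign complications. Second, write down the defining system: finitely many equations $p_i(\vec x) = 0$ and strict inequalities $q_j(\vec x) > 0$ over $\mathbb{Z}$ whose unique solution (in the relevant box) gives the vertex coordinates of $P$. Third, build an auxiliary $(d{+}1)$-dimensional ``arithmetic ladder'' — a polytope carrying a distinguished line segment on which the coordinates of $P$'s vertices and all intermediate computation values live as cross-ratios — and attach to it the $+$, $\times$, and order gadgets realizing each polynomial constraint; combining these is the step where one must be careful that the gadgets are mutually compatible (shared reference points, consistent orientations) and that the whole thing remains a single combinatorial polytope whose face poset is rigid enough. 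Fourth, lift from dimension $d{+}1$ to dimension $d{+}2$: the extra dimension is what lets us simultaneously (a) house the copy of $P$ as the face $f_P$ and (b) house all the gadget machinery so that every realization $S$ of $\mathcal S_P$ forces $\face(S,f_P)$ to be projectively — not merely combinatorially — equivalent to $P$. The projective (rather than affine) conclusion comes for free because cross-ratios, not absolute positions, are what the gadgets control, so the face $f_P$ is determined only up to the action of a projectivity; this is a feature, and it is why the statement is clean.

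The main obstacle, and the step where the real work lies, is the compatibility and rigidity of the combined gadget construction: each individual gadget is a small prescribed-subpolytope configuration whose realization space is understood, but one must show that gluing polynomially many of them into one polytope does not create unintended realizations — i.e., that no realization of $\mathcal S_P$ ``cheats'' by deforming the ambient polytope so that the gadgets no longer compute what they are supposed to. This is precisely the technical core of Richter-Gebert-style universality proofs, and here it must be adapted to the stamp setting, where we additionally need the marked face $f_P$ to carry the whole solution vector rigidly. I expect to spend most of Section \ref{stamp} establishing these rigidity lemmas (the generalizations of the tools from \cite{richter1996realization} alluded to in the introduction), after which the assembly into $\mathcal S_P$ with its face $f_P$, and the verification that $\face(S,f_P) \projeq P$ for every realization $S$, is a matter of bookkeeping.
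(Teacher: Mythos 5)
Your plan is essentially the naive outline that the paper itself sketches at the start of Section~\ref{stamp} and then explicitly departs from (``the actual construction will differ from this''), and as it stands it has a genuine gap: everything that makes the theorem true is deferred to unspecified ``rigidity lemmas'' and ``bookkeeping.'' Two specific mechanisms are missing. First, the anti-cheating mechanism. The paper does not establish rigidity of the large glued complex by a global argument; it introduces completion conditions and shows (Lemmas~\ref{lem:glue} and~\ref{lem:unglue}) that gluing exclusively along necessarily flat pyramidal facets lets every realization of the glued poset be decomposed back into realizations of the pieces, so that completion conditions combine by conjunction via connectors (Lemma~\ref{lem:cons}) and transmitters (Lemma~\ref{lem:projtrans}). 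Without this, or some substitute, your stated worry that gluing many gadgets ``does not create unintended realizations'' is precisely the unproved step, not a technicality to be absorbed later.

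Second, and more fundamentally, the transfer mechanism from arithmetic to geometry. Richter-Gebert's arithmetic gadgets are $4$-polytopes whose completion conditions constrain a planar computational frame (a $2k$-gon); they control cross-ratios in codimension $2$. Your ``arithmetic ladder in dimension $d+1$ carrying the coordinates as cross-ratios on a segment'' offers no mechanism for forcing anything in codimension $1$ (whether even a facet can be forced up to projectivity is left open in Section~\ref{question}), and you never say how the computed values are imposed on the $d$-dimensional face $f_P$ in every realization of the $(d+2)$-polytope. The paper's solution is concrete: a stamp of the unit cube (Lemma~\ref{lem:unit}) forces a projective coordinate system in every realization; anchor polytopes $\anch(\alpha)$ (Lemma~\ref{lem:anchor}), built from computational frames $\mc{R}_\alpha$ that encode the minimal polynomial of $\alpha$ together with rational bounds isolating the root (Lemma~\ref{lem:const}), pin one coordinate of a point lying on an edge of the cube; and Lemmas~\ref{lem:pent} and~\ref{lem:nicepose} show that after a rational affine change of coordinates each facet hyperplane of $P$ meets the cube in $d$ such anchorable points in general position, so it is the facet hyperplanes, rather than the vertex coordinates, of $\face(S,f_P)$ that get fixed. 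Your proposal would need to supply equivalents of all of these before the final assembly could honestly be called bookkeeping.
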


Such a polytope has been constructed in \cite[p.\ 134]{below2002complexity}. 
The stamp helps us answer questions about properties that faces inherit, or more generally the following class of predicates. 
Let $\psi$ be a predicate of several algebraic polytopes of the same combinatorial type. 
We say the face $f \in \mc{P}$ \df{projectively inherits} $\psi$ when, 
if $P_1,\dots,P_n$ are realizations of $\mc{P}$ such that $\psi(P_1,\dots,P_n)$ is true, 
then there are projectivities 
$\pi_1,\dots,\pi_n$ such that $\psi(\pi_1(\face(P_1,f)),\dots,\pi_n(\face(P_n,f)))$ is true. 
Recall that a ridge of a polytope is a face of co-dimension 2. 

\begin{figure}[ht]
\centering

\begin{tikzpicture}[scale=1.5]

\draw[thick] (0,0) coordinate (a) -- ++(0:1) coordinate (b) -- ++(40:0.6) coordinate (c);
\draw (c) -- ++(0:-1) coordinate (d) -- (a);
\draw[thick]
(0,1) coordinate (e) -- ++(0:1) coordinate (f) -- ++(40:0.6) coordinate (g) -- ++(0:-1) coordinate (h) -- cycle
(a) -- (e)
(b) -- (f)
(c) -- (g);
\draw
(d) -- (h);

\begin{scope}[xshift=2cm]
\draw[thick] (0,0) coordinate (a) -- ++(0:1) coordinate (b) -- ++(40:0.6) coordinate (c);
\draw (c) -- ++(0:-1) coordinate (d) -- (a);
\draw[thick]
(0,2) ++(0:.5) ++(40:.3) coordinate (x)
($(a)!.5!(x)$) coordinate (e) -- ($(b)!.5!(x)$) coordinate (f) 
-- ($(c)!.5!(x)$) coordinate (g) -- ($(d)!.5!(x)$) coordinate (h) -- cycle
(a) -- (e)
(b) -- (f)
(c) -- (g);
\draw
(d) -- (h);

\draw[dotted]
(x) -- (e)
(x) -- (f)
(x) -- (g)
(x) -- (h);
\end{scope}

\begin{scope}[xshift=4cm]
\draw[thick] (0,0) coordinate (a) -- ++(0:1) coordinate (b) -- ++(40:0.6) coordinate (c);
\draw (c) -- ++(0:-1) coordinate (d) -- (a);
\draw[thick]
(0,1) coordinate (e) -- ++(0:.5) coordinate (f)
 -- ++(40:0.6) coordinate (g) -- ++(0:-.5) coordinate (h) -- cycle
(a) -- (e)
(b) -- (f)
(c) -- (g);
\draw
(d) -- (h);

\draw[dotted]
(e) -- (0,2) -- (f)
(0,2) ++(40:.6) coordinate (x)
(g) -- (x) -- (h)
(0,2) -- (x);
\end{scope}

\end{tikzpicture}

\caption{Three combinatorially equivalent polytopes.  Only the left two are projectively equivalent.}
\end{figure}
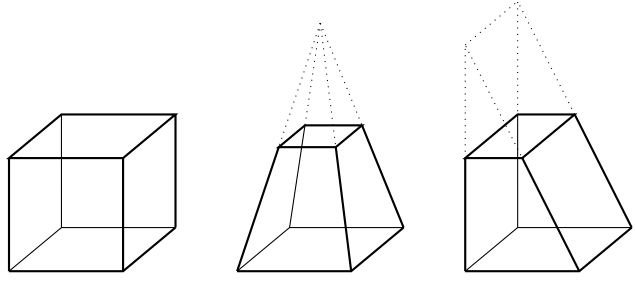

\vbox{
\begin{thrm}\label{thrm:alg_comb=proj} 
Let $\psi$ be 
a predicate of several algebraic polytopes of the same combinatorial type that ridges projectively inherit.  Then, $\psi$ holds for some realization of every combinatorial type of polytope if and only if it holds for some realization of every algebraic projective type.
Moreover, there can be a gap of at most 2 between the lowest dimension of a combinatorial type where $\psi$ always fails and the lowest dimension of an algebraic projective type where $\psi$ always fails, 
\[\begin{array}{rl@{\: }l}
& \forall p_i \in \ralg^{d+2} \exists P_j \in [\Cv_{i=1}^m p_i]_{\rm comb}. & \psi(P_1,\dots,P_n) \\ 
\Rightarrow &
\forall p_i \in \ralg^{d\phantom{+2}} \exists P_j \in [\Cv_{i=1}^m p_i]_{\rm{proj}}. & \psi(P_1,\dots,P_n) \\
\Rightarrow &
\forall p_i \in \ralg^{d\phantom{+2}} \exists P_j \in [\Cv_{i=1}^m p_i]_{\rm comb}. & \psi(P_1,\dots,P_n). 
\end{array}\]
\end{thrm}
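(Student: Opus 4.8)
The plan is to prove the two displayed implications, disposing of the second (middle line $\Rightarrow$ bottom line) first since it is immediate, and then treating the first. For the easy one: a projective equivalence is in particular a combinatorial equivalence, so $[\,\Cv_i p_i\,]_{\mathrm{proj}}\subseteq[\,\Cv_i p_i\,]_{\mathrm{comb}}$, and any tuple $P_1,\dots,P_n$ witnessing the middle line already witnesses the bottom line. All the work is in the first implication — passing from combinatorial realizability in dimension $d+2$ to \emph{projective} realizability in dimension $d$ — and the engine will be the stamp supplied by Theorem~\ref{the:paf}.

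For that implication, assume its hypothesis, fix algebraic points $p_1,\dots,p_m\in\ralg^{d}$, and put $P=\Cv_{i=1}^{m}p_i$, an algebraic polytope of dimension $d'\le d$ (nothing below changes if $d'<d$, as $d'+2\le d+2$). Since $P$ is a $d'$-polytope, Theorem~\ref{the:paf} yields a combinatorial $(d'+2)$-polytope $\mc{S}_P$ with a distinguished face $f_P$ such that $\face(S,f_P)\projeq P$ for every realization $S$ of $\mc{S}_P$. In particular $\face(S,f_P)$ is always $d'$-dimensional, so $f_P$ has codimension $2$ in $\mc{S}_P$; that is, $f_P$ is a ridge — this is exactly the point at which the hypothesis ``ridges projectively inherit $\psi$'' is used, and it is the source of the gap of $2$ between the dimensions. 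Moreover, the realization space of $\mc{S}_P$ is a non-empty semialgebraic set defined over $\mathbb{Q}$, so, since $\ralg$ is the real closure of $\mathbb{Q}$, it contains a point with coordinates in $\ralg$; taking the vertices of the corresponding realization as $q_1,\dots,q_{m'}\in\ralg^{d'+2}\subseteq\ralg^{d+2}$ exhibits $\mc{S}_P$ as $[\,\Cv_i q_i\,]_{\mathrm{comb}}$. The hypothesis of the first line, applied to this combinatorial type, then produces algebraic realizations $S_1,\dots,S_n$ of $\mc{S}_P$ with $\psi(S_1,\dots,S_n)$ true.

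Next I invoke that the ridge $f_P$ projectively inherits $\psi$: from $\psi(S_1,\dots,S_n)$ one obtains projectivities $\pi_1,\dots,\pi_n$ with $\psi\!\left(\pi_1(\face(S_1,f_P)),\dots,\pi_n(\face(S_n,f_P))\right)$ true, where each $\pi_j(\face(S_j,f_P))$ is again an algebraic polytope because $\psi$ is a predicate of algebraic polytopes. Since $\face(S_j,f_P)\projeq P$ for every $j$, the polytope $P_j:=\pi_j(\face(S_j,f_P))$ is projectively equivalent to $P=\Cv_i p_i$, i.e.\ $P_j\in[\,\Cv_i p_i\,]_{\mathrm{proj}}$, which is exactly the conclusion of the middle line. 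The ``if and only if'' in the statement of the theorem then follows by reading the two implications for all dimensions simultaneously: the bottom line ranging over all $d$ is the same family of statements as the first line ranging over all $d$, so ``$\psi$ holds for some realization of every combinatorial type'' and ``$\psi$ holds for some representative of every algebraic projective type'' are equivalent.

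I do not expect a serious obstacle: the argument is a short reduction, and everything genuinely hard — actually forcing a codimension-$2$ face into a prescribed projective type — is quarantined in Theorem~\ref{the:paf}, whose proof is deferred to Section~\ref{stamp}. The points needing care are bookkeeping: $\psi$ is first evaluated on the $n$ stamps $S_j$ and only then transported to their distinguished faces, which is legitimate precisely because those faces all carry the common combinatorial type of $P$; and two soft facts are used in passing, namely that the distinguished face of a stamp is a ridge (so one application of the inheritance hypothesis suffices) and that a non-empty semialgebraic set defined over $\mathbb{Q}$ has an $\ralg$-point (so a stamp always falls within the scope of the dimension-$(d+2)$ hypothesis).
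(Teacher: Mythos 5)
Your proposal is correct and follows essentially the same route as the paper: the trivial direction via projective equivalence refining combinatorial equivalence, and the main direction by applying the dimension-$(d+2)$ hypothesis to the stamp $\mc{S}_P$ of Theorem~\ref{the:paf} and then invoking that the ridge $f_P$ projectively inherits $\psi$. The extra bookkeeping you supply (that $f_P$ is a ridge, and that the stamp admits an algebraic realization so the hypothesis is applicable) is left implicit in the paper but is consistent with its argument.
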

}

\begin{proof}
Since projective equivalence is finer than combinatorial equivalence, we have the `if' direction trivially.  For the other direction, let $\psi$ be a predicate that ridges projectively inherit and suppose $\psi$ holds for some realizations of every combinatorial polytope.  Consider an algebraic polytope $P$.  Since the stamp $\mc{S}_P$ has realizations where $\psi$ holds, it must also hold for some projective copies of the face $f_P$ of each of these realizations, and these faces are all projectively equivalent to $P$ by Theorem \ref{the:paf}.  Thus, $\psi$ holds for some projective copies of $P$.  Also, since $\mc{S}_P$ is 2 dimensions higher than $P$, if $\psi$ holds for some realizations of every combinatorial polytope up to dimension $d+2$, then it holds for some projective copies of every algebraic polytope up to dimension $d$. 
\end{proof}

Generally it is common to consider polytopes in $\mb{R}^d$, so it would be nicer if Theorem~\ref{thrm:alg_comb=proj} were not restricted to polytopes in $\ralg^d$.  If we simply removed this condition, the resulting claim would be false.  
Instead, we can replace this restriction on the space of polytopes to further restrictions on the kind of properties considered.  Specifically, we require the $\psi$ to be semialgebraic.  
We say $\psi$ is \df{semialgebraic property} when, 
for each combinatorial type of polytope $\mc{P}$, the restriction of $\psi$ to realizations of $\mc{P}$ is expressible as a formula on the coordinates of the vertices in the language of real closed fields $({+},\,{\cdot}\,,\,0,\,1,\,{\leq})$. 
That is, for any combinatorial polytope $\mc{P}$ with $m$ vertices, there is some formula $\psi_\mc{P}$ in the language of real closed fields with free variables $v = (v_{1,1,1},\dots,v_{d,m,K})$ such that for any model $\mb{K}$ of real closed fields and any polytopes $P_1,\dots,P_K$ of type $\mc{P}$ with vertex coordinates $c \in \mb{K}^{dmK}$, 
$\psi(P_1,\dots,P_K)$ is true if and only if the formula $\psi_\mc{P}[c/v]$ defined by substituting each vertex coordinate $v_{i,j,k}$ by the constant $c_{i,j,k}$ is true in ${\mb{K}}$.  
For an accessible review of real closed fields and model theory see \cite[Section IV.23]{princeton2008}.

\begin{thrm}\label{thrm:log_comb=proj}
If $\psi$ is a semialgebraic property of several polytopes of the same combinatorial type such that ridges always projectively inherit $\psi$, then 
$\psi$ holds for some realizations of every combinatorial type of polytope if and only if it holds for some realizations of every projective type.
Moreover, there can be a gap of at most 2 dimensions.
\end{thrm}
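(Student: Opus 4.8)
The plan is to reduce this to Theorem~\ref{thrm:alg_comb=proj} by a transfer argument between $\ralg$ and $\mathbb{R}$, using completeness of the theory of real closed fields (the Tarski transfer principle). The hypothesis that $\psi$ is semialgebraic is used exactly to make the relevant geometric assertions first-order, so that their truth over $\ralg$ and over $\mathbb{R}$ coincide.

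First I would record the routine observation that, for a fixed combinatorial type $\mc{P}$ with a fixed labeling of its $m$ vertices, the set of coordinate vectors $c$ for which the convex hull of the corresponding $m$ points is a polytope with face lattice $\mc{P}$ (compatibly with the labeling) is defined by a formula $\rho_{\mc{P}}$ over $\mathbb{Q}$, and likewise that ``$c'$ is a projective copy of $c$'' --- i.e.\ there is an invertible $(d{+}1)\times(d{+}1)$ matrix inducing a projectivity that is bounded on the polytope given by $c$ and carries each labeled face of $c$ to the correspondingly labeled face of $c'$ --- is defined by a formula $\sigma_{\mc{P}}$ over $\mathbb{Q}$. In particular the combinatorial types realizable with at most $m$ vertices are the same over every real closed field. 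Combining $\rho_{\mc{P}}$ and $\sigma_{\mc{P}}$ with the formulas $\psi_{\mc{P}}$ from the definition of ``semialgebraic property'', each of the three lines displayed in Theorem~\ref{thrm:alg_comb=proj}, read over an arbitrary real closed field $\mathbb{K}$ in place of $\ralg$, becomes a finite conjunction --- over those finitely many combinatorial types --- of first-order sentences whose truth values are fixed by the theory of real closed fields and hence do not depend on $\mathbb{K}$.

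Now assume $\psi$ holds for some realization (an $n$-tuple) of every combinatorial type of dimension at most $d+2$, that is, the first displayed line holds over $\mathbb{R}$. Each of the corresponding sentences is true in $\mathbb{R}$, hence true in $\ralg$, so $\psi$ holds for some algebraic realization of every such type. In the same way, the hypothesis that ridges projectively inherit $\psi$ --- for each combinatorial type and each ridge, the first-order assertion that every tuple of realizations satisfying $\psi$ admits projectivities sending the ridges to copies that again satisfy $\psi$ --- transfers from $\mathbb{R}$ down to $\ralg$, so ridges projectively inherit the restriction of $\psi$ to algebraic polytopes. Theorem~\ref{thrm:alg_comb=proj} now applies and yields the second displayed line over $\ralg$: every algebraic polytope of dimension at most $d$ has an algebraic projective copy satisfying $\psi$. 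But this too is, for each combinatorial type $\mc{P}$, a single first-order sentence, roughly $\forall c\,\big(\rho_{\mc{P}}(c) \to \exists c'\,(\sigma_{\mc{P}}(c,c') \wedge \psi_{\mc{P}}(c'))\big)$; being true in $\ralg$ it is true in $\mathbb{R}$, which is precisely the second displayed line over $\mathbb{R}$. The third line follows trivially, since projective equivalence refines combinatorial equivalence. The ``gap of at most $2$'' is then extracted exactly as for Theorem~\ref{thrm:alg_comb=proj}: the first implication shows that the least dimension of a combinatorial type on which $\psi$ always fails exceeds the least dimension of a projective type on which $\psi$ always fails by at most $2$, while the reverse inequality holds because, if $\psi$ fails on every realization of a combinatorial type $\mc{P}$, then it fails on every realization of the projective type of any polytope of type $\mc{P}$, which has the same dimension.

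I expect the only real difficulty to be bookkeeping rather than mathematics: checking carefully that ``realizing a prescribed combinatorial type'' and ``being a projective copy'' --- the latter including both the boundedness of the projectivity on the relevant face and compatibility with the vertex labeling, on which $\psi$ may genuinely depend --- are expressible by formulas over $\mathbb{Q}$, uniformly across real closed fields, so that the Tarski transfer principle legitimately applies in both directions. Once that is settled, the upward transfer step from $\ralg$ to $\mathbb{R}$, which is the one place where semialgebraicity of $\psi$ is essential, is immediate, and everything else is inherited from Theorem~\ref{thrm:alg_comb=proj}.
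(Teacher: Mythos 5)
Your proposal is correct and follows essentially the same route as the paper: both rest on the semialgebraicity hypothesis to encode realizability, projective equivalence, and $\psi$ as first-order formulas, then shuttle between $\mathbb{R}$ and $\ralg$ via Tarski's transfer principle, with the stamp (through Theorem~\ref{thrm:alg_comb=proj}) doing the geometric work over $\ralg$. The only differences are presentational --- you argue directly and cite Theorem~\ref{thrm:alg_comb=proj} as a black box (explicitly transferring the ridge-inheritance hypothesis down to $\ralg$, which the paper leaves implicit), whereas the paper argues contrapositively with the formulas $\rho_\mc{P}$, $\psi'_\mc{P}$, $\chi'_\mc{P}$ written out and the stamp invoked inline.
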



\begin{proof}
Briefly, Theorem~\ref{thrm:log_comb=proj} follows from the fact that $\mb{R}$ and $\ralg$ are elementarily equivalent \cite{tarski1951decision}, and both realizability and projective equivalence are definable in the language of real closed fields. 

Note that we will exclusively use $\wedge$ for logical conjunction in this proof. 
For a formula $\phi$ with free variable $x$, and a formula $\theta$, recall $\phi[\theta/x]$ denotes the formula where $x$ is replaced by $\theta$.
We will also use conventional notation in $\mb{R}^d$ as abbreviations for the formulas that can easily be written in the language of real closed fields.  For example, $x-y=z$ should be understood as $x=z+y$.

First we write a formula $\psi'_\mc{P}$ that says there are realizations of $\mc{P}$ where $\psi_\mc{P}$ holds. 
For this, write a formula $\rho_\mc{P}$ with $dmK$ free variables $v_{i,j,k}$ for each coordinate of each vertex $\vec{v}_{j,k}$ of each polytope $P_k$ that says these are indeed vertex coordinates of a polytope of type $\mc{P}$.  Let $\mc{F}_1,\dots,\mc{F}_n$ be the set vertices in each facet of $\mc{P}$. 
\[ \rho_\mc{P} := \exists \vec{a}_{1,1} \dots \exists\vec{a}_{n,K}. \nu_\mc{P} \]
\[\nu_\mc{P} := 
\bigwedge_{ k=1 }^K 
\left( \bigwedge_{v_j\in \mc{F}_i}  \left<\vec{a}_{i,k},\vec{v}_{j,k}-\vec{\tau}_k\right> = 1 \right) \wedge 
\left( \bigwedge_{v_j\not\in \mc{F}_i}  \left<\vec{a}_{i,k},\vec{v}_{j,k}-\vec{\tau}_k\right> < 1 \right)  \]
\[ \vec{\tau}_k := \tfrac{1}{m}\sum_{j=1}^{m} \vec{v}_{j,k}  \]
Note that $\vec{\tau}_k$ is a formula for a translation vector that centers $P_k$.
The formula $\nu_\mc{P}$
includes free variables for the half-spaces supporting each facet of a centered translation of the polytope, and says that the vertices of a facet are on the boundary of its supporting half-space and the rest of the vertices are in the interior of this half-space.
Now write a formula $\psi'_\mc{P}$ asserting the existence of a realization where $\psi_\mc{P}$ holds.
\[ \psi'_\mc{P} := \exists\vec{v}_{1,1}  \dots  \exists\vec{v}_{m,K}.\: \rho_\mc{P} \wedge \psi_\mc{P} \ .\]

Next, write a formula $\chi_\mc{P}$ saying for $K$ polytopes with combinatorial type $\mc{P}$ that there is a projective copy where predicate $\psi_\mc{P}$ holds.  
For this we represent projectivities $\pi_k$ on $\mathbb{R}^d$ by $(d{+}1)\times(d{+}1)$ matrices 
$M_k$ acting on homogeneous coordinates.
\[ \chi_\mc{P} := \exists M_1 \dots \exists M_K  \exists x_{1,1} \dots \exists x_{m,K} . \mu_\mc{P}, \quad
M_k := \left[ \begin{array}{cc} A_k & b_k \\ c_k^* &  1 \end{array} \right]
\]
\[ \mu_\mc{P} := 
\bigwedge_{j,k = 1,1}^{m_\mc{P},K} \left( x_{j,k} {\cdot} \left(\left< \vec{c}_k,\vec{w}_{j,k} \right> +1\right) = 1 \right)
\wedge (\rho_\mc{P} \wedge \psi_\mc{P})[x_{j,k} {\cdot} (A_k \vec{w}_{j,k} +\vec{b}_k)/\vec{v}_{j,k}] \]
Note that the formula $\mu_\mc{P}$ is defined by 
replacing each coordinate of $\vec{v}_{j,k}$ in the formula $\rho_\mc{P} \wedge \psi_\mc{P}$ with a formula for the corresponding coordinate of $\pi_k(\vec{v}_{j,k}) := x_{j,k} \vec{u}_{1:d}$ where $\vec{u}=M_k[v_{j,k} ; 1]$ and $x_{j,k} = u_{d+1}^{-1}$. 
Now write a formula $\chi'_\mc{P}$ asserting that every realization has a projective copy where $\psi_\mc{P}$ holds.
\[ \chi'_\mc{P} := \forall \vec{v}_{1,1}  \dots \forall \vec{v}_{m_\mc{P},K}.\: \rho_\mc{P} \Rightarrow \chi_\mc{P} \ . \]

In both $\mb{R}$ and $\ralg$ we have immediately that the existence of a realization of every projective type where the predicate holds implies the existence of such a realization for every combinatorial type.
For the other direction, 
suppose there is some combinatorial $d$-polytope $\mc{P}$ with realizations in $\mathbb{R}^d$ such that $\psi$ does not hold for any projective copies in $\mathbb{R}^d$. That is, $\mathbb{R} \models \neg\chi'_\mc{P}$.  Then, $\ralg \models \neg\chi'_\mc{P}$, which asserts the existence of an algebraic polytope $P$ where $\psi$ does not hold for any algebraic projective copies $\pi_1(P),\dots,\pi_K(P)$.  Let $\mc{S}_P$ be the combinatorial $(d\!+\!2)$-polytope that is the stamp of $P$.  Then, $\ralg \models \neg\psi'_{\mc{S}_P}$, and therefore $\mathbb{R} \models \neg\psi'_{\mc{S}_P}$.  Thus, we have found a  combinatorial $(d\!+\!2)$-polytope such that $\psi$ does not hold for any realization in $\mathbb{R}^d$.
\end{proof}

\section{Antiprisms}
\label{antiprism}

A polytope is a \df{prismoid} when every vertex of the polytope is in one of two nonintersecting faces, which we call the \df{bases} of the prismoid.  
That is, every prismoid $P$ is of the form 
\[ {P} = B_0 \cv B_1 \ =  \{t_0 B_0 +t_1 B_1: t_i \geq 0, t_0+ t_1=1\} \]
where ${B_0, B_1 }$ are disjoint faces. 
The \df{sides} of the prismoid are faces that are not contained in either base, along with the trivial side $\bot$. 
When a combinatorial polytope is a prismoid, we call it a combinatorial prismoid. 
Some examples of prismoids are pyramids, tents, prisms, and antiprisms. 

We define a purely combinatorial construction, called an abstract prismoid. 
The definition is motivated by the fact that a face of a prismoid is determined by its intersection with each of the bases.  For bounded posets $\mc{B}_0$, $\mc{B}_1$, an \df{abstract prismoid} $\mc{P}$ with these bases is a bounded subposet of the categorical product 
$\mc{B}_0 \catprod \mc{B}_1$ 
such that the bases themselves are included as $(f_0, \bot)$, $(\bot, f_1) \in \mc{P}$ for all $f_i \in \mc{B}_i$. 
All faces that are not in a base and the face $\bot = (\bot,\bot)$ are sides of $\mc{P}$, denoted 
\[\side(\mc{P}) := \{ (f_0,f_1) \in \mc{P} : (f_0{=}\bot) \Leftrightarrow (f_1{=}\bot) \} .\]  

\vbox{
\begin{lem}\label{lem:prism} 
Every combinatorial prismoid is isomorphic to an abstract prismoid. 
And, an abstract prismoid $\mc{P} \subset \mc{B}_0 \catprod \mc{B}_1$ can be realized if and only if
there are realizations $B_i$ of the bases $\mc{B}_i$ such that the combinatorial type of the common refinement of the normal fans of the $B_i$ is the dual of the sides of $\mc{P}$,  
\[ \labl({\rm nfan}(B_0) \wedge {\rm nfan}(B_1))^* = \side(\mc{P}).  \]
Moreover, 
the set of all realizations of $\mc{P}$ is the set of all polytopes that are projectively equivalent to $[B_0 ; 0] \cv [B_1 ; 1]$ for some $B_i$ satisfying the above.
\end{lem}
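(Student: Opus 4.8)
The plan is to establish the three assertions in sequence, exploiting the correspondence between faces of a prismoid and pairs of faces of its bases.

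\textbf{Step 1: Every combinatorial prismoid is an abstract prismoid.} Let $P = B_0 \cv B_1$ be a prismoid with disjoint base faces $B_0, B_1$. I would first show that every proper face $F$ of $P$ satisfies $F = (F \cap B_0) \cv (F \cap B_1)$, with $F \cap B_i$ a face of $B_i$ (possibly empty). Indeed, a face $F$ is the intersection of $P$ with a supporting hyperplane $H$; since every vertex of $P$ lies in $B_0 \cup B_1$, every vertex of $F$ lies in $(H \cap B_0) \cup (H \cap B_1)$, and $H \cap B_i$ is a face of $B_i$. Conversely $(F\cap B_0)\cv(F\cap B_1) \subseteq F$ by convexity, so they coincide. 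This gives an injective map $f \mapsto (f \cap B_0,\, f \cap B_1)$ from $\labl(P)$ into $\mc{B}_0 \catprod \mc{B}_1$; it is order-preserving and order-reflecting because containment of faces of $P$ is detected on the bases. The bases appear as $(f_0,\bot)$ and $(\bot,f_1)$, so the image is an abstract prismoid. (One should check $\bot \mapsto (\bot,\bot)$ and $\top \mapsto (\top,\top)$, and that the two bases really are disjoint faces so that no face other than $\bot$ maps to a pair with exactly one coordinate $\bot$ — this is exactly the $\side$ condition.)

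\textbf{Step 2: Realizability criterion via common refinement of normal fans.} Given realizations $B_i$ of $\mc{B}_i$, I would place them in parallel hyperplanes by forming $Q = [B_0;0] \cv [B_1;1] \subset \mb{R}^{d+1}$. The key observation is that the sides of $Q$ — the faces meeting both slabs $\{x_{d+1}=0\}$ and $\{x_{d+1}=1\}$ — are in bijection with the faces of the \emph{Cayley polytope} / in bijection with cells of the common refinement $\nfan(B_0) \wedge \nfan(B_1)$: a side of $Q$ is cut out by a linear functional $(\vec{a}, c)$, and its restriction to level $0$ picks out a face of $B_0$ in the normal cone direction $\vec a$, while at level $1$ it picks out a face of $B_1$ in direction $\vec a$ (up to the shift by $c$, which is forced); the side is nonempty and proper exactly when $\vec a$ lies in a common cone of the two normal fans. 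Thus $\labl(\nfan(B_0)\wedge\nfan(B_1))^* \cong \side(\text{prismoid on }B_0,B_1)$. Therefore an abstract prismoid $\mc{P}$ is realized by some $Q$ with bases $B_0,B_1$ iff $\side(\mc{P})$ equals this dual, which is the stated condition; and whenever the bases realize $\mc{B}_0,\mc{B}_1$ correctly and the side condition holds, the resulting $Q$ realizes all of $\mc{P}$ (faces in the bases and sides together exhaust $\mc{P}$).

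\textbf{Step 3: All realizations are the projective copies of $[B_0;0]\cv[B_1;1]$.} One inclusion is immediate: any projective copy of such a $Q$ is combinatorially $\mc{P}$, hence a realization. For the converse, let $R$ be any realization of $\mc{P}$. Its two bases $B_0' = \face(R,(\top,\bot))$, $B_1' = \face(R,(\bot,\top))$ are disjoint faces spanning complementary-ish affine subspaces, and $R = B_0' \cv B_1'$. I would apply a projectivity sending the affine hull of $B_0'$ into $\{x_{d+1}=0\}$ and that of $B_1'$ into $\{x_{d+1}=1\}$ — such a projectivity exists because $B_0'$ and $B_1'$, being disjoint faces of a polytope, are separated by a hyperplane, which we may send to infinity appropriately — obtaining a polytope of the form $[\tilde B_0;0]\cv[\tilde B_1;1]$ projectively equivalent to $R$. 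Since $R\combeq\mc{P}$, the $\tilde B_i$ realize $\mc{B}_i$ and (by Step 2 applied in reverse) satisfy the normal-fan condition. Hence every realization is projectively equivalent to some $[B_0;0]\cv[B_1;1]$ of the required form.

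\textbf{Main obstacle.} The crux is Step 2: making precise and correct the bijection between sides of the lifted prismoid and cells of the common refinement of the two normal fans, including the duality (co-dimension reversal) and the handling of the trivial side $\bot = (\bot,\bot)$ versus the full cone in the fan. Getting the normal-cone bookkeeping right — that a functional $(\vec a,c)$ supporting a side selects the $\vec a$-faces of \emph{both} bases simultaneously, and that the admissible $\vec a$ are exactly a common refinement cell — is where the real content lies; the rest is standard convexity and projective geometry.
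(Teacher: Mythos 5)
Your proposal follows essentially the same route as the paper: faces of a prismoid are determined by their intersections with the two bases (your Step 1), the lifted polytope $[B_0;0]\cv[B_1;1]$ has sides indexed by pairs of faces whose normal cones have intersecting relative interiors, i.e.\ by the common refinement of the normal fans (your Step 2 is exactly the paper's linear-functional argument, with the last coordinate of the objective shifted to equalize the maxima over the two levels), and every realization is brought to this form by a projectivity placing the bases in the hyperplanes at heights $0$ and $1$ (your Step 3).

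The one point to repair is the parenthetical justification in Step 3. A hyperplane separating $B_0'$ from $B_1'$ necessarily cuts through the prismoid $R$, so sending it to infinity would make the image unbounded and cannot produce a projective copy; moreover the affine hulls of two disjoint faces can intersect, so one cannot separate the affine hulls either. The standard fix: take supporting hyperplanes $H_0 \supseteq B_0'$ and $H_1 \supseteq B_1'$ with $H_i \cap R = B_i'$; then $H_0 \cap H_1 \cap R = B_0' \cap B_1' = \emptyset$, so in the pencil of hyperplanes through the flat $H_0 \cap H_1$ there is one disjoint from $R$, and sending \emph{that} hyperplane to infinity makes $H_0$ and $H_1$ parallel while keeping the image of $R$ bounded; an affine map then places them at levels $0$ and $1$. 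The paper asserts the existence of this projectivity without proof, so this is a repair of your stated reason rather than a divergence from the paper's argument; everything else in your outline matches.
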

}


\begin{figure}[h]
\centering

\begin{tikzpicture}
\begin{scope}[scale=1/2]
\path (0,-2.25)
 +(75:3 and 1.2) coordinate (a)
 +(165:3 and 1.2) coordinate (b)
 +(255:3 and 1.2) coordinate (c)
 +(345:3 and 1.2) coordinate (d)
 (0,2.25)
 +(120:2.5 and 1) coordinate (e)
 +(240:2.5 and 1) coordinate (f)
 +(360:2.5 and 1) coordinate (g)
 ($(g)!.5!(a)$) coordinate (h)
 ($(e)!.5!(a)$) coordinate (i)
 ($(e)!.5!(b)$) coordinate (j)
 ($(f)!.5!(b)$) coordinate (k)
 ($(f)!.5!(c)$) coordinate (l)
 ($(g)!.5!(d)$) coordinate (m);
\end{scope}

\draw[thick]
 (e) -- (f) -- (g) -- cycle
 (e) -- (b) -- (f) -- (c) -- (d) -- (g)
 (b) -- (c) -- (d)
;
\draw
 (d) -- (a) -- (b) 
 (g) -- (a) -- (e) 
;
\draw[gray] 
(h) -- (i) -- (j) -- (k) -- (l) -- (m) -- cycle
;

\end{tikzpicture}

\medskip

\begin{tikzpicture}[thick]
\matrix[column sep=10]
{
\begin{scope}[scale=2/3]
\path (0,0) coordinate (o)
 (45:2) coordinate (a)
 (135:2) coordinate (b)
 (225:2) coordinate (c)
 (315:2) coordinate (d)
 (90:1.5) coordinate (e)
 (210:1.5) coordinate (f)
 (330:1.5) coordinate (g)
 ($(g)!.5!(a)$) coordinate (h)
 ($(e)!.5!(a)$) coordinate (i)
 ($(e)!.5!(b)$) coordinate (j)
 ($(f)!.5!(b)$) coordinate (k)
 ($(f)!.5!(c)$) coordinate (l)
 ($(g)!.5!(d)$) coordinate (m)
 (0:2.5) coordinate (ab)
 (90:2.5) coordinate (bc)
 (180:2.5) coordinate (cd)
 (270:2.5) coordinate (da)
 (270:2) coordinate (fg)
 (30:2) coordinate (ge)
 (150:2) coordinate (ef);
\end{scope}
 
 \draw (a) -- (b) -- (c) -- (d) -- cycle; 
 \draw[teal,->] (o) -- (ab);
 \draw[teal,->] (o) -- (bc);
 \draw[teal,->] (o) -- (cd);
 \draw[teal,->] (o) -- (da);
&
 
 \draw (h) -- (i) -- (j) -- (k) -- (l) -- (m) -- cycle; 
 \draw[teal,->] (o) -- (ab);
 \draw[teal,->] (o) -- (bc);
 \draw[teal,->] (o) -- (cd);
 \draw[teal,->] (fg) -- (da);
 \draw[violet,->] (o) -- (fg);
 \draw[violet,->] (o) -- (ge);
 \draw[violet,->] (o) -- (ef); 
&
 
 \draw[white] (ab) -- (cd);
 \draw (e) -- (f) -- (g) -- cycle; 
 \draw[violet,->] (o) -- (fg);
 \draw[violet,->] (o) -- (ge);
 \draw[violet,->] (o) -- (ef); 
\\
};

\end{tikzpicture}

\caption{
\textsc{Top:} A prismoid with a triangular and square base.
\textsc{Bottom:} The prismoid bases and a horizontal slice with the common refinement of the normal fans of the bases.
}\label{fig:prismoid}
\end{figure}
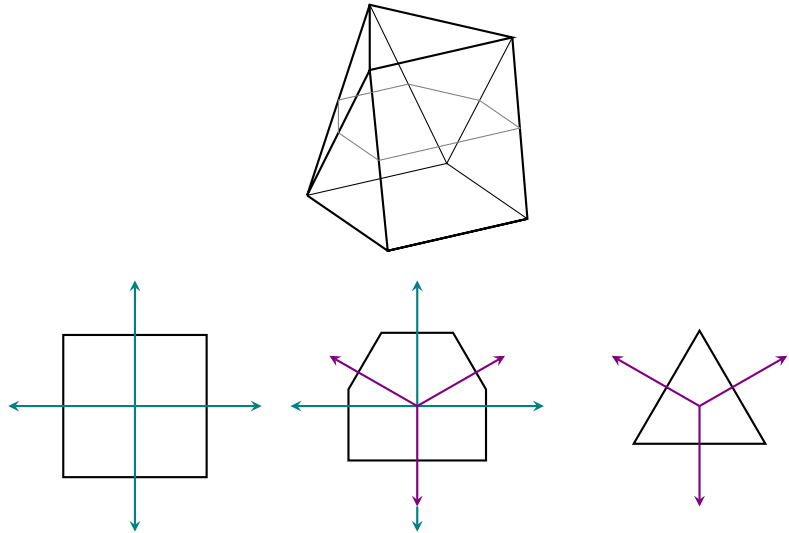

\begin{proof}
For the first part, Every combinatorial prismoid is a bounded poset, and every face can be uniquely identified by its intersection with the bases, and the bases are faces, so every combinatorial prismoid is isomorphic to an abstract prismoid.  
The second part follows from the fact that we can project a prismoid so that its bases are in parallel hyperplanes, in which case a horizontal slice between these hyperplanes is a weighted Minkowski sum of the bases, and the normal fan of the Minkowski sum of a pair of polytopes is the common refinement of their normal fans \cite[Proposition 7.12]{ziegler2007lecture}.

Specifically, for the `if' direction of the second part, suppose we have such realizations $B_0$, $B_1$.  Recall that every face of a polytope (in particular $P = [B_0 ; 0] \cv [B_1 ; 1] \subset \mb{R}^d $) is the solution set of some linear optimization problem.  Specifically, for a face $f$ these are the optimization problems with linear objective function in the relative interior of the normal cone of $f$.  Notice that the optimal solutions in $[B_i ; i]$ depend only on the restriction of the linear objective function $v^* = [w^* \ c]$ to the first $d-1$ coordinates.  
For $q_i \in [B_i ; i]$, we have $v^* q_0 = w^* q_0$ and $v^* q_1 = w^* q_1 + c$. 
For optimal solutions $q_i$ in $B_i$ to objective $w^*$, setting $c=w^* (q_0 - q_1)$ we get $v^* q_0 = v^* q_1$. 
Hence, a non-trivial pair of faces $(f_0,f_1)$ defines a face of $P$ if and only if there is a vector in the relative interior of the normal cone of the faces $f_0$ of $B_0$ and $f_1$ of $B_1$. 
Therefore, $\side(\labl(P))$ is the common refinement of the normal fans of $B_0$ and $B_1$. 

For the `only if' direction suppose we have a realization $P$ of some abstract prismoid $\mc{P} \subset \mc{B}_0 \catprod \mc{B}_1$.  Let $B_0 = \face(P,(\top,\bot))$ and $B_1 = \face(P,(\bot,\top))$.
Then there is some projective transformation $\pi$ sending $B_i$ into the hyperplane $\{x:x_d=i\}$. 
Hence $\pi(P) = [B_0' ; 0] \cv [B_1' ; 1]$ for some realizations $B_i'$ of $\mc{B}_i$, and again 
$\side(\mc{P})$ is the common refinement of the normal fans of $B_0'$ and $B_1'$. 
\end{proof}

Recall from the introduction that an (antiprism) interval polytope of $P$ is defined as a polytope with face lattice consisting of the intervals of $\labl(P)$ ordered by (reverse) inclusion.  
The \df{abstract antiprism} of a bounded poset $\mc{P}$ is the poset $\{ (g,f^*) \in \mc{P} \catprod \mc{P}^* : g \leq f \}$.
When an abstract antiprism can be realized we call it a combinatorial antiprism. 

We call an ordered pair of polytopes $(P_1,P_2)$ \df{balanced}, when they are centered, have the same combinatorial type, and the relative open normal cone of a face $g$ of $P_1$ intersects the relative open face $f$ of $P_2$ if and only if $f$ is greater than $g$, 
\[ \bal(P_1,P_2) \ := \ \forall g,f \neq \bot. \ (\ncone(P_1,g)^\circ \cap \face(P_2,f)^\circ \neq \emptyset  \ \Leftrightarrow \ g \leq f) \ . \]
Observe that $\bal(P_1,P_2)$ implies $\bal(P_2^*,P_1^*)$, but that balance is not a symmetric relation, see Figure~\ref{fig:balance}. 

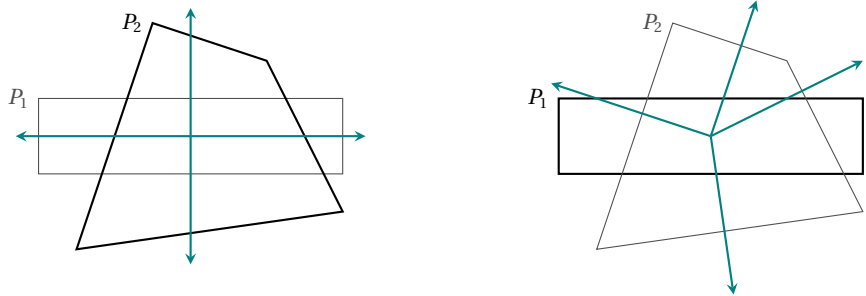
\begin{figure}[ht]
\centering

\begin{tikzpicture}[scale=0.5]

\path
 (4,1) coordinate (a)
 (4,-1) coordinate (b)
 (-4,-1) coordinate (c)
 (-4,1) coordinate (d)
 (4,-2) coordinate (e)
 (2,2) coordinate (f)
 (-1,3) coordinate (g)
 (-3,-3) coordinate (h)
;

\matrix[column sep=2cm]
{

\draw[black!70]
 (a) -- (b) -- (c) -- (d) node[left] {\figsz $P_1$} -- cycle
;
\draw[thick]
 (e) -- (f) -- (g) node[left] {\figsz $P_2$} -- (h) -- cycle
;

\begin{scope}[thick,teal,->]
\draw (0,0) -- (2.3,0);
\draw (0,0) -- (0,1.7);
\draw (0,0) -- (-2.3,0);
\draw (0,0) -- (0,-1.7);
\end{scope}

&

\draw[thick]
 (a) -- (b) -- (c) -- (d) node[left] {\figsz $P_1$} -- cycle
;
\draw[black!70]
 (e) -- (f) -- (g) node[left] {\figsz $P_2$} -- (h) -- cycle
;

\begin{scope}[thick,teal,->]
\draw[scale=0.6] (0,0) -> (1,3);
\draw[scale=0.5] (0,0) -> (4,2);
\draw[scale=0.3] (0,0) -> (1,-7);
\draw[scale=0.7] (0,0) -> (-3,1);
\end{scope}

\\
};

\end{tikzpicture}
\caption{
\textsc{Left:} A pair of quadrilaterals that are balanced, $\bal(P_1,P_2)$. 
\textsc{Right:} The same pair of quadrilaterals in reverse order is not balanced, $\neg\bal(P_2,P_1)$.  
}\label{fig:balance}
\end{figure}

\begin{thrm}\label{the:bpdef}
A combinatorial polytope has an antiprism if and only if it has a balanced pair. 
Moreover, if $\bal(P_0,P_1)$ then $[P_0 ; 0] \cv [P_1^* ; 1]$ is an antiprism of $P_0$. 
\end{thrm}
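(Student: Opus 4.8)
The plan is to recognize the abstract antiprism $\mc{A}$ of $\mc{P} := \labl(P_0)$ as an abstract prismoid with bases $\mc{B}_0 = \mc{P}$ and $\mc{B}_1 = \mc{P}^*$, and then apply Lemma~\ref{lem:prism}. Writing an element of $\mc{A} \subset \mc{P} \catprod \mc{P}^*$ as $(g, f^*)$ with $g \leq f$ in $\mc{P}$, the two bases sit inside $\mc{A}$ as $\{(g,\bot)\}$ and $\{(\bot, f^*)\}$, and a nontrivial side of $\mc{A}$ is exactly a pair $(g, f^*)$ with $\bot \neq g \leq f \neq \top$. Lemma~\ref{lem:prism} then says $\mc{A}$ is realizable if and only if there are realizations $B_0$ of $\mc{P}$ and $B_1$ of $\mc{P}^*$ with $\labl(\nfan(B_0) \wedge \nfan(B_1))^* = \side(\mc{A})$, and in that case the realizations of $\mc{A}$ are precisely the projective copies of $[B_0;0] \cv [B_1;1]$.

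Next I would put the bases into a convenient form. Normal fans are translation invariant and translating a realization of $\mc{P}$ or of $\mc{P}^*$ yields another one, so we may take $B_0, B_1$ centered. Then $B_1 = P_1^*$ for the centered polytope $P_1 := B_1^*$, which realizes $(\mc{P}^*)^* = \mc{P}$, and $\nfan(B_1) = \nfan(P_1^*)$ is the face fan of $P_1$, whose cones are the $\cone(P_1, f)$ over faces $f$ of $P_1$; conversely every centered realization $P_1$ of $\mc{P}$ arises this way. Renaming $B_0 = P_0$, the question becomes whether there exist centered realizations $P_0, P_1$ of $\mc{P}$ with $\labl(\nfan(P_0) \wedge \nfan(P_1^*))^* = \side(\mc{A})$.

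The core of the proof is to show this last condition is equivalent to $\bal(P_0, P_1)$. A cone of the common refinement $\nfan(P_0) \wedge \nfan(P_1^*)$ receives the canonical label $(g, f^*)$ exactly when $\ncone(P_0, g)^\circ \cap \cone(P_1, f)^\circ \neq \emptyset$, and the inclusion order on these cones---$\ncone(P_0,g) \cap \cone(P_1,f) \subseteq \ncone(P_0,g') \cap \cone(P_1,f')$ iff $g \geq g'$ and $f \leq f'$ in $\mc{P}$---is precisely the order dual to that of the pairs $(g,f^*)$ in $\mc{P} \catprod \mc{P}^*$. Hence $\labl(\nfan(P_0) \wedge \nfan(P_1^*))^* = \side(\mc{A})$ holds if and only if, for all $g \neq \bot$ and $f \neq \bot$, $\ncone(P_0,g)^\circ \cap \cone(P_1,f)^\circ \neq \emptyset$ iff $g \leq f$. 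Since $P_1$ is centered its proper faces avoid the origin, so $\cone(P_1,f)^\circ$ is the set of positive multiples of points of $\face(P_1,f)^\circ$; and since $\ncone(P_0,g)^\circ$ is a cone, it meets $\cone(P_1,f)^\circ$ if and only if it meets $\face(P_1,f)^\circ$. The condition thus reads $\ncone(P_0,g)^\circ \cap \face(P_1,f)^\circ \neq \emptyset \iff g \leq f$, which is the definition of $\bal(P_0,P_1)$. (The trivial faces---the empty face and the whole polytope on each side---match the extreme elements of the two posets and are checked directly.)

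Combining these steps, $\mc{P}$ has an antiprism if and only if it has a balanced pair. For the final claim: if $\bal(P_0, P_1)$ then $(P_0, P_1^*)$ satisfies the condition of Lemma~\ref{lem:prism}, so every projective copy of $[P_0;0] \cv [P_1^*;1]$ is a realization of $\mc{A}$; in particular $[P_0;0] \cv [P_1^*;1]$ itself is an antiprism of $P_0$. The step I expect to demand the most care is the equivalence in the third paragraph: pinning down exactly which cones of the common refinement carry which labels, tracking the several order reversals (coming from $\mc{P}^*$, from normal cones, and from cones over faces), treating the degenerate faces correctly, and reconciling the translation invariance of normal fans with the centering that $\bal$ requires.
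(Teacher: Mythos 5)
Your proposal is correct and takes essentially the same route as the paper: apply Lemma~\ref{lem:prism} to the abstract antiprism viewed as a prismoid with bases $\mc{P}$ and $\mc{P}^*$, then convert the common-refinement condition into $\bal(P_0,P_1)$ via the identity $\ncone(P_1^*,f^*)^\circ = \cone(P_1,f)^\circ = \mb{R}_{\geq 0}\,\face(P_1,f)^\circ$. The additional care you take with centering, the order reversals, and the trivial faces merely spells out details the paper leaves implicit.
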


\begin{proof}
By Lemma \ref{lem:prism}, $\mc{P}$ has an antiprism if and only if there is 
are realizations $P_0$ of $\mc{P}$ and $P_1^*$ of $\mc{P}^*$ such that the common refinement of their normal fans are indexed by minimal pairs $(g,f^*) \in {\mc{P} \catprod \mc{P}^*}$ such that $g \leq f$.
And, in this case $[P_0 ; 0] \cv [P_1^* ; 1]$ is an antiprism. 
The common refinement of the normal fans consists of minimal non-empty intersections, so this is equivalent to the statement, 
``$\ncone(P_0,g)^\circ$ and $\ncone(P_1^*,f^*)^\circ$ intersect if and only if $g \leq f$''. 
Since $\ncone(P_1^*,f^*)^\circ = \cone(P_1,f)^\circ = \mb{R}_{\geq 0} \face(P_1,f)^\circ$ and $P_1$ realizes $\mc{P}$, this is equivalent to $\bal(P_0, P_1)$.
\end{proof}

We say a polytope is \df{perfectly centered} when 
the orthogonal projection of the origin into the affine closure of each face is in the relative interior of that face.  
In particular, if a perfectly centered polytope has full dimension, then it is centered. 
Note this has the following physical interpretation.  A polytope is perfectly centered when it can rest on any face without toppling over, assuming its center of mass is at the origin. 
This is also equivalent to the polytope being balanced with itself. 

\begin{lem}
\label{lem:pc=balself}
A polytope is perfectly centered if and only if it is balanced with itself. 
\end{lem}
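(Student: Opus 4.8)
\medskip

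The plan is to express both sides through the point $p_g$, the orthogonal projection of the origin onto the affine hull of a nonempty face $g$ of $P$, and to isolate one geometric input: \emph{if $P$ is perfectly centered, then $p_g$ lies in the relative interior of $\ncone(P,g)$ for every nonempty face $g$.} Two elementary facts do all the work. First, $\ncone(P,g)$ is contained in $U_g$, the orthogonal complement of the direction space of the affine hull of $g$: a linear functional whose maximum over $P$ is attained on some face containing $g$ is constant on $g$, hence perpendicular to every difference vector inside $g$. Second, the affine hull of $g$ meets $U_g$ in the single point $p_g$ (this is just the defining property of orthogonal projection). So a point lying in both $\ncone(P,g)^\circ$ and the affine hull of $g$ must equal $p_g$.

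From this, the implication $\bal(P,P)\Rightarrow$ ``perfectly centered'' is immediate: taking $f=g$ in the balance relation (legitimate since $g\le g$) yields a point $x\in\ncone(P,g)^\circ\cap\face(P,g)^\circ$, which must be $p_g$, so $p_g\in\face(P,g)^\circ$; as $g$ ranges over all nonempty faces, this is perfect centeredness. For the converse, first prove the geometric input. Fix $g$ and project $P$ orthogonally onto $U_g$; the image $\bar P$ is a polytope in which the face $g$ collapses to the single point $p_g$, and one checks that $\ncone(P,g)$ is exactly the outer normal cone of $\bar P$ at $p_g$. Since $\dim\ncone(P,g)=d-\dim g=\dim U_g$, this cone is full dimensional in $U_g$, so $p_g$ is a vertex of $\bar P$, and the edges of $\bar P$ emanating from $p_g$ are precisely the projections of the faces $h$ of $P$ covering $g$. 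For such an $h$, writing its affine hull as the affine hull of $g$ together with a line $\mb R u$ with $u\in U_g$, one computes $p_h=p_g-\tfrac{\langle p_g,u\rangle}{|u|^2}\,u$; perfect centeredness gives $p_h\in\face(P,h)^\circ$, so $p_h$ lies in the relative interior of the corresponding edge of $\bar P$, and this forces the opposite endpoint $q$ of that edge to satisfy $\langle p_g,q\rangle<|p_g|^2$. By the vertex-optimality criterion (a vertex of a polytope maximizes a linear functional over the polytope as soon as it does so against every neighbouring vertex), $p_g$ is the unique maximizer of $\langle p_g,\cdot\rangle$ over $\bar P$, hence $\argmax_{y\in P}\langle p_g,y\rangle=g$, that is, $p_g\in\ncone(P,g)^\circ$. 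When $g=\{v\}$ is a vertex this already recovers the statement that $v$ is the unique maximizer of $\langle v,\cdot\rangle$ over $P$, which here follows directly from perfect centeredness of the edges at $v$.

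Granting the input, assume $P$ perfectly centered, hence centered, and verify the balance relation for nonempty faces $g,f$. If $g\le f$: by the input $p_g\in\ncone(P,g)^\circ$ and $p_f\in\ncone(P,f)^\circ\subseteq\ncone(P,g)$, so the half-open segment from $p_g$ toward $p_f$ stays in the relative interior $\ncone(P,g)^\circ$, lies in the affine hull of $f$, and approaches the relatively interior point $p_f$ of $f$; some point of it therefore lies in $\ncone(P,g)^\circ\cap\face(P,f)^\circ$. Conversely, suppose $x\in\ncone(P,g)^\circ\cap\face(P,f)^\circ$; then $x\in P$ has carrier face $f$ and $g=\argmax_{y\in P}\langle x,y\rangle$, and we must show $g\le f$ --- equivalently, that the maximizing face of a functional lying in the relative interior of a face is contained in that face. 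Writing $x=p_f+\ell$ with $\ell$ in the direction space of $f$, the input for $f$ gives $\langle p_f,\cdot\rangle\le|p_f|^2$ on $P$ with equality exactly on $f$; so for a vertex $v$ of $f$ maximizing $\langle x,\cdot\rangle$ over $f$ and any vertex $w$ of $g$ outside $f$, expanding $p_f$ and $v$ through $p_f+\ell$ lets one rewrite the difference $\langle x,w\rangle-\langle x,v\rangle$ so that its sign is controlled by $\langle v,w-v\rangle<0$ (vertex-optimality of $v$) and $\langle p_f,w-v\rangle\le0$ (the input for $f$), yielding $\langle x,w\rangle<\langle x,v\rangle$ and hence $g\subseteq f$. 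Carrying out this cancellation --- using that the perturbation $\ell=x-p_f$ is bounded \emph{by the face $f$ itself} and not merely parallel to it --- is the step requiring genuine care; it is done explicitly when $\dim f=1$, and the general case proceeds along the same lines. This last point is the main obstacle of the proof: the geometric input, together with vertex-optimality of the vertices of $f$, is exactly what makes it go through.
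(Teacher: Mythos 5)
Your overall route is genuinely different from the paper's: the paper disposes of this lemma in two lines by quoting Broadie's Theorem 2.1 (perfect centeredness is equivalent to $[P;0]\cv[P^*;1]$ realizing the antiprism of $P$) and then invoking Theorem~\ref{the:bpdef}, whereas you attempt a self-contained geometric proof. Much of what you do is sound: the easy direction ($\bal(P,P)\Rightarrow$ perfectly centered, by taking $f=g$ and using $\ncone(P,g)\subseteq U_g$ and $\mathrm{aff}(g)\cap U_g=\{p_g\}$) is complete; the ``geometric input'' $p_g\in\ncone(P,g)^\circ$ via projection onto $U_g$, the edge formula $p_h=p_g-\tfrac{\langle p_g,u\rangle}{|u|^2}u$, and local-implies-global optimality is correct (modulo standard facts you rightly treat as standard); and the half of the balance relation asserting nonempty intersection when $g\le f$ is fine.

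The genuine gap is in the last step, and you name it yourself: showing that if $x\in\face(P,f)^\circ\cap\ncone(P,g)^\circ$ then $g\le f$, i.e.\ that every vertex $w\notin f$ satisfies $\langle x,w\rangle<\langle x,v\rangle$. Your two ingredients, $\langle p_f,w-v\rangle<0$ and $\langle v,w-v\rangle<0$, do control $\langle x,w-v\rangle$ when $\dim f=1$, because there $\ell=x-p_f$ is forced (by $p_f\perp V_f$ and the choice of $v$ as the maximizing endpoint) to equal $t(v-p_f)$ with $0\le t<1$, so $\langle x,w-v\rangle=(1-t)\langle p_f,w-v\rangle+t\langle v,w-v\rangle<0$. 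But for $\dim f\ge 2$ the vector $\ell$ need not be a nonnegative multiple of $v-p_f$, and the sign of $\langle \ell,w-v\rangle$ is not controlled by those two inequalities; so ``the general case proceeds along the same lines'' is exactly where the proof is missing, not a routine extension. A repair is available and close in spirit to your edge computation: induct on $\dim f$, writing $x=(1-t)p_f+tz$ with $0\le t<1$ and $z$ in the relative interior of a proper face $f'<f$ (the exit point of the ray from $p_f$ through $x$), and estimate $\langle x,w\rangle=(1-t)\langle p_f,w\rangle+t\langle z,w\rangle<(1-t)\langle p_f,y\rangle+t\langle z,y\rangle=\langle x,y\rangle$, where $y\in f'$ maximizes $\langle z,\cdot\rangle$ over $f'$, using the input for $f$ on the first term and the inductive statement for $f'$ (with $w\notin f'$) on the second. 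With that induction supplied, your argument becomes a complete, elementary alternative to the paper's citation of Broadie; as written, the decisive case is asserted rather than proved.
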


\begin{proof} 
By \cite[Theorem 2.1]{broadie1985antiprisms}, a polytope $P$ is perfectly centered if and only if $[P ; 0] \cv [P^* ; 1]$ is an antiprism of $P$, and by Theorem \ref{the:bpdef}, this holds if and only if $\bal(P,P)$.
\end{proof}

Unlike 4 dimensions, where the problem of realizing polytopes has no easy solution, the situation is much simpler in 3 dimensions \cite{richter1996realization}.  Every 3-polytope has a particularly nice realization called a midscribed polytope \cite{thurston1978geometry}. 
Recall that a 3-polytope is midscribed when every edge is tangent to the unit sphere.  

\begin{thrm}
Midscribed polytopes are perfectly centered. 
Hence, every combinatorial 3-polytope has an antiprism.  
\end{thrm}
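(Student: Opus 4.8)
The plan is to realize each combinatorial $3$-polytope as a midscribed polytope $P$ whose midsphere is the unit sphere centered at the origin \emph{and} which contains the origin in its interior, to verify that such a $P$ is perfectly centered, and then to invoke Lemma~\ref{lem:pc=balself} and Theorem~\ref{the:bpdef}: a perfectly centered polytope is balanced with itself, hence has a balanced pair, hence has an antiprism.

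First I would recall that a midscribed realization exists by the Koebe--Andreev--Thurston circle packing theorem \cite{thurston1978geometry}, and that after a M\"obius normalization of the midsphere we may take the centroid of the edge-tangency points $p_e$ to be the origin. I claim the $p_e$ affinely span $\mathbb{R}^3$: if they all lay on a circle $S^2\cap\Pi$, then for each facet $F$ the tangency points of the edges of $F$ would lie on that circle and also on $C_F:=S^2\cap\operatorname{aff}(F)$, hence in a set of at most two points, contradicting that $F$ has at least three edges --- here also using that distinct edges of a polygon have distinct tangency points, since two lines tangent to a circle at a common point coincide. As the $p_e$ span $\mathbb{R}^3$ and have centroid $0$, the origin lies in the interior of $\operatorname{conv}\{p_e\}\subseteq P$, so $0\in P^\circ$. (This normalization is genuinely needed: a square pyramid can be midscribed with its entire body on one side of the origin, so an arbitrary midscribed realization need not contain the origin, hence need not be perfectly centered --- indeed not even centered.)

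Next I would check the defining condition of perfectly centered face by face. For the improper face $P$: the orthogonal projection of the origin onto $\operatorname{aff}(P)=\mathbb{R}^3$ is the origin, which lies in $P^\circ$ by the previous step. For a vertex: the condition holds vacuously. For a facet $F$: the plane $\operatorname{aff}(F)$ meets $S^2$ in a genuine circle $C_F$ --- it cannot be tangent to $S^2$, or all tangency points of $F$'s edges would coincide --- and the orthogonal projection of the origin onto $\operatorname{aff}(F)$ is the center of $C_F$. Here I would use the elementary fact that if every edge of a convex polygon is tangent to a fixed circle at a point of that edge, then the disk bounded by that circle lies in the relative interior of the polygon and each tangency point lies in the relative interior of its edge; I would prove this by showing that at each vertex of $F$ the two incident edge-lines are exactly the two tangent lines to $C_F$ from that vertex (that vertex lies strictly outside $C_F$, since a line through a point of the closed disk tangent to the circle would be the tangent there, forcing the two edges collinear), so the disk lies in the interior angle at that vertex, and then intersecting over all vertices. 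This puts the center of $C_F$ in $F^\circ$ and each $p_e$ in $e^\circ$. Finally, for an edge $e$: $\operatorname{aff}(e)$ is the line spanned by $e$, whose point nearest the origin is $p_e$, and $p_e\in e^\circ$ by the facet case applied to any facet containing $e$.

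Having verified every face, $P$ is perfectly centered, so $\bal(P,P)$ holds by Lemma~\ref{lem:pc=balself}, so $P$ has a balanced pair, so its combinatorial type has an antiprism by Theorem~\ref{the:bpdef}; since every combinatorial $3$-polytope admits such a realization, every combinatorial $3$-polytope has an antiprism. The step I expect to be the main obstacle is the first one --- guaranteeing that the origin lies in the interior, which forces one to choose the circle-packing realization carefully --- together with stating the tangential-polygon lemma precisely enough to exclude the excircle configuration (ruled out exactly because the tangency points lie on the edges themselves, not on their extensions). The remaining steps are routine.
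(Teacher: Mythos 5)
Your proposal is correct and follows essentially the same route as the paper: verify the perfectly-centered condition separately for vertices, edges (tangency point is the foot of the perpendicular), and facets (the facet circumscribes the disk $\operatorname{aff}(F)\cap B^3$ whose center is the projection of the origin), use a M\"obius/conformal normalization of the midscribed realization to ensure the polytope is centered, and conclude via Lemma~\ref{lem:pc=balself} and Theorem~\ref{the:bpdef}. The only difference is that you carry out the normalization and the tangential-polygon details more explicitly (centroid of tangency points at the origin, spanning argument) than the paper, which simply invokes the conformal normalization from the circle-packing literature.
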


\begin{proof}
Each edge of a midscribed polytope is tangent to the unit sphere, so the orthogonal projection of the origin into the line spanning that edge is exactly the point of tangency.  
Hence, the perfectly centered condition holds for edges. 
For each facet of a midscribed polytope, the plane spanning that facet intersects the unit ball in a disk, and each edge of the facet is tangent to the disk. 
In other words, each facet circumscribes the disk where it intersect the unit ball. 
The orthogonal projection of the origin into the plane spanning the facet is the center of this disk. 
Hence, the condition holds for facets.
Since midscribed polytopes correspond to disk packings of the sphere in this way, we may assume the polytope is centered, otherwise apply an appropriate conformal map so that the disks are not all contained in any single hemisphere \cite{thurston1978geometry}.
For vertices, the condition is trivial. 
Thus, midscribed polytopes are perfectly centered, and since 
every combinatorial 3-polytope has such a realization, 
by Lemma \ref{lem:pc=balself} every 3-polytope has a realization that is balanced with itself,
so by Theorem \ref{the:bpdef} every combinatorial 3-polytope has an antiprism. 
\end{proof}

We now set about constructing a 4-dimensional polytope without an antiprism. 
By Lemma~\ref{lem:prism} this is equivalent to finding a combinatorial 4-polytope without a balanced pair. 
We use Theorem~\ref{thrm:log_comb=proj} to reduce this problem to finding an polygon that cannot be balanced by projective transformations, Lemma~\ref{lem:unbalp}.  To use Theorem~\ref{thrm:log_comb=proj}, we need to show that faces projectively inherit $\bal$, Lemma~\ref{lem:bpi}, and that $\bal$ is an semialgebraic property, which may be observed directly from the definition of $\bal$. 

Since $\bal$ is defined in terms of polar duality, it will be helpful to recall how the polar dual of a polytope is transforms when applying a projective transformation in the primal. 
We represent a projective transformation $\pi$ acting on a vector $x \in \mb{R}^d$ by a matrix $M = [A,  b ; c^*,  1]$ acting on homogeneous coordinates by 
\[ \pi(x) =  \left[ \begin{array}{cc} A & b \\ c^* & 1 \\ \end{array}\right]_\mb{P} (x) = \frac{Ax +b}{c^*x+1}. \]
For $\pi$ as above, we denote 
\[ \pi^* 
= \left(\left[ \begin{array}{cc} -I & 0 \\ 0 & 1 \\ \end{array}\right] M^* \left[ \begin{array}{cc} -I & 0 \\ 0 & 1 \\ \end{array}\right] \right)_\mb{P}
=  \left[ \begin{array}{cc} A^* & {-}c \\ {-}b^* & 1 \\ \end{array}\right]_\mb{P}, \]
and we call $\pi^{-*} := (\pi^*)^{-1} = (\pi^{-1})^*$ the polar transformation of $\pi$.

\begin{prop}\label{prop:polarproj}
For a centered polytope $P$ and a projectivity $\pi$ such that $\pi(P)$ is centered, bounded, and has the same orientation as $P$,
\[ \pi(P)^* = \pi^{-*}(P^*). \]
\end{prop}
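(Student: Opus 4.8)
The plan is to verify the identity directly in homogeneous coordinates, using the description of polar duality for polytopes in terms of their supporting halfspaces, and the fact that a linear change of coordinates on $\mathbb{R}^{d+1}$ induces the transpose-inverse map on the dual space of linear functionals. First I would recall that for a centered polytope $P = \{x : \langle a_i, x\rangle \le 1\}$, the polar is $P^* = \conv\{a_i\}$, and that both descriptions homogenize cleanly: writing $\hat P = \cone([P;1]) \subset \mathbb{R}^{d+1}$, the polar cone $\hat P^\vee$ (with respect to the standard pairing, up to the sign convention baked into the $[-I,0;0,1]$ conjugation) is exactly $\cone([P^*;1])$ after dehomogenizing. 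The matrix $M$ represents $\pi$ on homogeneous coordinates, so $\pi(P)$ homogenizes to $M\hat P$, and its polar cone is $(M\hat P)^\vee = M^{-*}\hat P^\vee$ by the standard adjoint relation $\langle M^{-*}\xi, Mx\rangle = \langle \xi, x\rangle$. Dehomogenizing $M^{-*}\hat P^\vee$ and tracking the sign conjugation then yields $\pi^{-*}(P^*)$, which is the claim.

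The key steps, in order, are: (1) set up the homogenization $\hat P = \cone([P;1])$ and check that $\hat P^\vee$ (polar cone under the standard inner product) dehomogenizes to $[P^*;1]$ when $P$ is centered — this is where centeredness of $P$ is used, guaranteeing $0 \in P^\circ$ so that $\hat P$ is a pointed full-dimensional cone with a bounded polar; (2) observe $\widehat{\pi(P)} = M\hat P$, which requires $\pi(P)$ to be bounded (so that the projective image is genuinely an affine polytope and no points go to infinity) and centered (so its homogenization is again $\cone([\cdot;1])$); (3) apply the linear-algebra fact $(M C)^\vee = M^{-*} C^\vee$ for a pointed cone $C$ and invertible $M$; (4) dehomogenize $M^{-*}\hat P^\vee$ and match it against the definition of $\pi^{-*}$ given just before the proposition, checking that the two $[-I,0;0,1]$ conjugations in the definition of $\pi^*$ are exactly what is needed to convert the "inner-product polar cone" bookkeeping into the stated matrix $[A^*, -c; -b^*, 1]_\mathbb{P}$. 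The orientation hypothesis enters in step (4)/the dehomogenization: it ensures the representative of $M^{-*}$ we pick has the correct sign on its last row/column so that the homogenizing coordinate stays positive on $[P^*;1]$, i.e. that $\pi^{-*}$ maps $P^*$ to a bounded polytope rather than its antipode.

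The main obstacle I expect is the sign/scaling bookkeeping in steps (1) and (4): polar duality of cones is only defined up to the choice of sign convention, projective matrices are only defined up to scalar, and the map $\pi^{-*}$ is defined through a specific conjugation by $\mathrm{diag}(-I,1)$; making all of these conventions line up so that the equality holds on the nose — as an equality of polytopes, not merely projectively — is the delicate part. The cleanest way to handle this is probably to avoid cones altogether on a second pass and instead argue at the level of halfspaces: if $\pi(P) = \{x : \langle a_i', x\rangle \le 1\}$ and $P = \{x : \langle a_i, x \rangle \le 1\}$, compute $a_i'$ explicitly in terms of $a_i$, $A$, $b$, $c$ from the formula $\pi(x) = (Ax+b)/(c^*x+1)$, recognizing that $\langle a_i', y \rangle \le 1$ for all $y \in \pi(P)$ iff $\langle a_i', \pi(x)\rangle \le 1$ for all $x \in P$, which after clearing the positive denominator $c^*x+1$ becomes a linear inequality in $x$ that must be (a positive multiple of) $\langle a_i, x\rangle \le 1$; reading off the resulting transformation on the vectors $a_i$ shows it is precisely $\pi^{-*}$, and since $P^* = \conv\{a_i\}$ and $\pi(P)^* = \conv\{a_i'\}$ and $\pi^{-*}$ is projective-linear, this gives $\pi(P)^* = \pi^{-*}(P^*)$. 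Here the centeredness of both $P$ and $\pi(P)$ is exactly what makes the "$\le 1$" normalization available on both sides, and the orientation/boundedness hypothesis is what guarantees $c^*x + 1 > 0$ throughout $P$ so that clearing denominators preserves the inequality direction.
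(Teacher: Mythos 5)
Your ``second pass'' halfspace computation is essentially the paper's own proof: the paper likewise first records $c^*p+1>0$ for $p\in P$ (from boundedness and orientation) and $-b^*x+1>0$ for $x\in\pi(P)^*$ (from centeredness of $P$, via $\langle x,\pi\vec{0}\rangle=\langle x,b\rangle<1$), and then clears these positive denominators to turn $\langle x,\pi p\rangle\le 1$ into $\langle\pi^*x,p\rangle\le 1$, finishing with the substitution $y=\pi^*x$. The one real difference is that the paper never passes to a facet list: it manipulates the defining inequalities of the polar over all $p\in P$, so $\pi(P)^*=\pi^{-*}(P^*)$ comes out as a set-level identity and no convex hulls need to be moved through a projectivity. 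That is precisely the step you gloss: from $a_i'=\pi^{-*}(a_i)$ you conclude $\conv\{a_i'\}=\pi^{-*}(\conv\{a_i\})$ ``since $\pi^{-*}$ is projective-linear'', but projectivities do not in general commute with convex hulls; here it is legitimate only because the denominator $-b^*x+1$ of $\pi^*$ is positive on all of $\pi(P)^*=\conv\{a_i'\}$ (equivalently, $\pi^*$ is an admissible, bounded projectivity on that polytope), which is exactly the positivity fact you and the paper both establish — so the gap is a one-line fix, but it should be stated rather than attributed to linearity. Your first route via homogenization, $(MC)^\vee=M^{-*}C^\vee$, and dehomogenization would also work and is more conceptual, at the cost of the sign and representative-scaling bookkeeping you already anticipate; the paper's (and your fallback) route trades that bookkeeping for the two explicit positivity checks.
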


\begin{proof}
Since $\pi$ is bounded and preserves orientation on $P$, we have $\forall p\in P.\ c^*p+1 > 0$. 
Since $\forall x\in \pi(P)^*, \forall p \in P^\circ.\ \left<x,\pi p\right> < 1$, 
and in particular $\left<x,\pi \vec{0}\right> = x^*b < 1$, 
we have $-b^*x+1 > 0$.
Thus, 
\[ \begin{array}{r@{\ }c@{\ }l}
\pi(P)^* 
&=& \left\{ x: \forall q\in \pi(P), \left<x, q\right> \leq 1 \right\} \\
&=& \left\{ x: \forall p\in P, \left<x,\pi p\right> \leq 1 \right\} \\
&=& \left\{ x: \forall p\in P, \left<x, \frac{A p + b}{c^*p+1}\right> \leq 1 \right\} \\
&=& \left\{ x: \forall p\in P, \left<x, A p \right> \leq \left<c,p\right> +1 - \left<x,b\right> \right\} \\
&=& \left\{ x: \forall p\in P, \left<\frac{A^* x -c}{-b^*x +1}, p \right> \leq 1 \right\} \\
&=& \left\{ x: \forall p\in P, \left<\pi^{*}x, p\right> \leq 1 \right\} \\
&=& \left\{ \pi^{-*}(y): \forall p\in P, \left<y, p\right> \leq 1 \right\} \\
&=& \pi^{-*}(P^*).  \qedhere
\end{array}  \]
\end{proof}

Suppose we are given a pair of polygons, and we want to find a balanced pair of projective copies. 
We start by reducing the space of transformations we need to consider. 
Since applying a projective transformation to one polygon and applying the polar transformation to the other polygon preserves balance, 
we can reduce the problem to applying a projective transformation to only one polygon and keeping the other polygon fixed.  
Furthermore, 
a projective transformation consists of an affine part and a perspectivity, but a perspectivity applied to a single vector only scales that vector.  
To see this, compare the matrix representation of a projective transformation to that of the affine part of the same transformation 
\[
\left[ \begin{array}{cc}
A & b \\ c^* & 1 \\ \end{array}\right]_\mb{P} (x) = \frac{Ax +b}{c^*x+1}, 
\hspace{2cm} 
\left[ \begin{array}{cc}
A & b \\ 0 & 1 \\ \end{array}\right]_\mb{P} (x) = \frac{Ax +b}{1}.
\] 
Since scaling vectors by positive values does not change the cone of positive linear combinations of those vectors, and balance depends on the intersection of cones, we can reduce the problem further to balancing a pair of polygons by applying an affine transformation to one of the polygons.

We will now see informally why an affine transformation cannot always balance polygons. 
Two polygons $P_0,P_1$ are balanced if and only if 
the direction vectors of the vertices of $P_0$ and $P_1^*$ are interleaved around the unit circle.  
That is, the vectors alternate around the unit circle between belonging to one polygon and the polar of the other. 
For an affine transformation to balance one polygon with another,  
it may have to change some of these direction vectors to make them alternate. 
If we think of moving a transformation continuously from the identity to one that balances the pair, along the way the vectors will turn clockwise or counter-clockwise on the circle. 
The main idea is to construct a pair of polygons that require such an affine transformation to turn too many direction vectors alternately clockwise and counter-clockwise. 


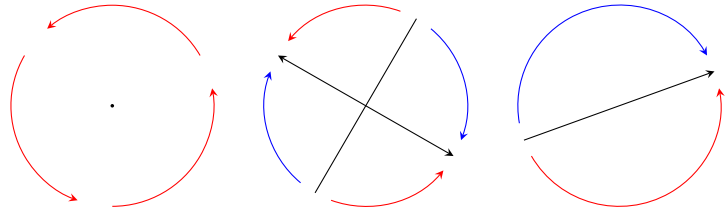
\begin{figure}[h]
\begin{center}

\begin{tikzpicture}[scale=2/3]
\draw[->,red] (30:2) arc (30:130:2);
\draw[->,red] (150:2) arc (150:250:2);
\draw[->,red] (-90:2) arc (-90:10:2);
\fill (0,0) circle (1pt); 

\path (5,0) coordinate (a);
\draw[->,red]  (a) +(70:2) arc (70:140:2);
\draw[<-,blue] (a) +(160:2) arc (160:230:2);
\draw[->,red]  (a) +(250:2) arc (250:320:2);
\draw[<-,blue] (a) +(-20:2) arc (-20:50:2);
\draw (a) +(60:2) -- +(240:2);
\draw[<->] (a) +(150:2) -- +(330:2);

\path (10,0) coordinate (a);
\draw[<-,blue] (a) +(30:2) arc (30:190:2); 
\draw[->,red] (a) +(-150:2) arc (-150:10:2);
\draw[<-] (a) +(20:2) -- +(200:2);

\end{tikzpicture}

\caption{From the left, the way the directions of vectors turn by rotating, stretching, translating.}
\label{fig:turning_vectors}
\end{center}
\end{figure}

To get an idea of how many direction vectors is too many we decompose an affine transformation into parts and see how many vectors each part can handle.  Assume the transformation preserves orientation. 
Consider the special orthogonal linear part (rotating), 
symmetric positive definite linear part (stretching), 
and translational part of an orientation preserving affine transformation.  
The orthogonal part turns all vectors in the same way. 
The spd part divides the circle into 4 quadrants where direction vectors turn alternately clockwise and counter-clockwise. 
And, the translational part divides the circle into 2 halves where direction vectors turn the opposite way.  
Naively adding this up we get 7 regions; see Figure \ref{fig:turning_vectors}.  We construct a polygon such that an affine transformation must to turn 8 vectors alternately clockwise and counter-clockwise to balance the polygon with a copy of itself, which we then show is impossible.

With these limitations in mind, 
let $G$ be the polygon with vertices $(8,5)$, $(7,7)$ and all permutations and changes of sign: $(8,5)$, $(7,7)$, $(5,8)$, $(-5,8)$, $(-7,7)$, $(-8,5)$, $(-8,-5)$, $(-7,-7)$, $(-5,-8)$, $(5,-8)$, $(7,-7)$, $(8,-5)$.

\begin{figure}[h]
\begin{center}
\begin{tikzpicture}[scale=1/2]

\draw[thick,->,teal]
(0,0) -- (9.6,4.8);
\draw[ultra thick,teal]
(7,7) -- (8,5)
;


\draw  (7,7) -- (5,8)
 -- (-5,8) -- (-7,7) -- (-8,5)
 -- (-8,-5) -- (-7,-7) -- (-5,-8)
 -- (5,-8) -- (7,-7) -- (8,-5)
 -- (8,5) -- cycle
;


 \draw[very thick,->,red] (25:6) arc (25:35:6);
 \draw[<-,blue] (55:6) arc (55:65:6);
 \draw[->,red] (115:6) arc (115:125:6);
 \draw[<-,blue] (145:6) arc (145:155:6);
 \draw[->,red] (205:6) arc (205:215:6);
 \draw[<-,blue] (235:6) arc (235:245:6);
 \draw[->,red] (295:6) arc (295:305:6);
 \draw[<-,blue] (325:6) arc (325:335:6);

\end{tikzpicture}
\caption{A polygon that cannot be balanced with itself by affine transformations.
The thick red arrow indicates how the normal ray of the thick edge must turn to intersect that edge.
Similarly, the 7 other red and blue arrows indicate how the normal rays of other edges must turn to balance an affine copy of the polygon with itself.}  
\end{center}
\end{figure}
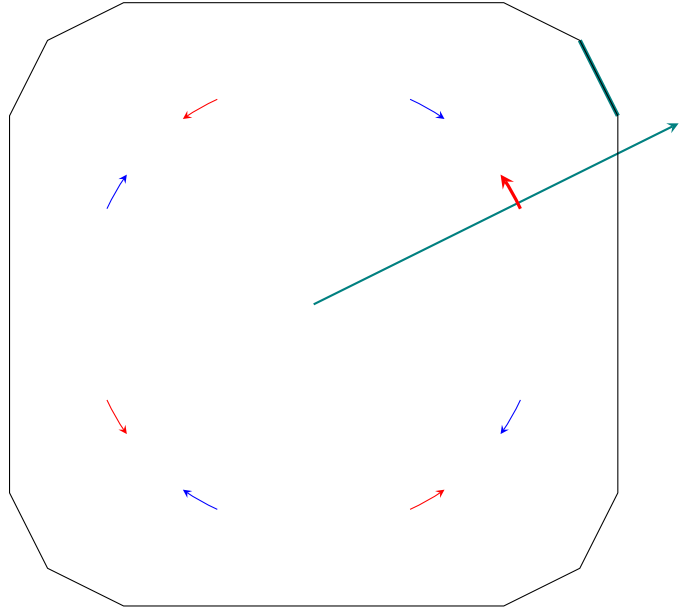

\begin{lem}\label{lem:unbalp} No two projective copies of $G$ are balanced. 
\end{lem}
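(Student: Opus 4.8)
The plan is to reduce, exactly as the surrounding discussion foreshadows, from projective copies to affine copies of a single polygon, and then to derive a contradiction from a ``winding number'' count of how the normal rays of the edges must turn. First I would invoke Proposition~\ref{prop:polarproj} together with the observation (already made informally above) that balance depends only on the arrangement of the rays spanned by the vertices of $P_0$ and the rays normal to the edges of $P_0$ (equivalently, the rays spanned by the vertices of $P_1^*$); since applying $\pi$ to one polygon and $\pi^{-*}$ to the other preserves balance, a balanced pair of projective copies of $G$ would yield a projectivity $\pi$ with $\bal(\pi(G), G)$ after normalizing $P_1$ to be $G$ itself. Next, because a perspectivity only rescales each individual vector by a positive scalar and balance is invariant under positive rescaling of the rays involved, I can absorb the perspective part and assume $\pi$ is an affine map; composing with a reflection if necessary (balance of $(P_0,P_1)$ is equivalent to balance of $(P_2^*,P_1^*)$ and $G$ is centrally symmetric, so orientation can be arranged), I reduce to: no orientation-preserving affine map $\alpha$ satisfies $\bal(\alpha(G), G)$.

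Then I would set up the combinatorial description of $\bal(\alpha(G),G)$ for polygons: two centered polygons are balanced iff, reading around the unit circle, the rays normal to the edges of $\alpha(G)$ and the rays through the vertices of $G$ interleave in the pattern dictated by the abstract antiprism — concretely, between consecutive edge-normals of $\alpha(G)$ there is exactly one vertex ray of $G$ and vice versa, with the correct incidences. For $G$ as identity we already have this self-balancing pattern (the $24$ rays — $12$ vertex directions and $12$ edge normals — interleave), and the content is that no nontrivial orientation-preserving affine map can maintain it. Here I would use that $\alpha(G)$ has the same vertex rays as $G$ turned by $\alpha$, that the edge-normal ray of a facet $[v_i,v_{i+1}]$ of $\alpha(G)$ is the direction orthogonal to $\alpha(v_{i+1}-v_i)$, and that $G$'s explicit coordinates make all $24$ rays lie in narrow ``clusters'' near the $8$ diagonal directions $(\pm1,\pm1)$ — this is the point of choosing vertices like $(8,5),(7,7),(5,8)$: near each of the $8$ directions $45^\circ + 90^\circ k$ (and the four axis directions) there is a cluster of edge normals and vertex rays whose cyclic order must be preserved, forcing $\alpha$ to turn $8$ of the relevant rays alternately clockwise and counterclockwise relative to their neighbors.

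Finally I would run the decomposition/counting argument. Write $\alpha = T \circ S \circ R$ as translation $\circ$ symmetric-positive-definite $\circ$ rotation (polar decomposition of the linear part plus a translation); track, for each of the $8$ ``critical'' edges, the signed rotation of its normal ray as $\alpha$ varies continuously from the identity to the putative balancing map. The rotation $R$ moves every normal ray by the same angle; the spd part $S$ partitions directions into $4$ arcs (bounded by the eigendirections of $S$) on which normals turn alternately in opposite senses; the translation $T$ partitions directions into $2$ halves on which normals turn in opposite senses — giving at most $4+2+1 = 7$ ``sign regions'' around the circle for the composite rate of turning, hence at most $7$ sign changes in the angular velocity of the edge-normal as a function of direction. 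But preserving the balanced interleaving against $G$'s $8$ symmetrically placed clusters requires the $8$ critical normals to be turned with alternating sign, i.e.\ at least $8$ sign changes; this is the contradiction. The main obstacle — the part I expect to require real care rather than bookkeeping — is making the informal ``$4+2+1=7$ versus $8$'' count rigorous: one must argue precisely that the three parts of the affine map contribute sign regions additively (and that a continuous homotopy from the identity to the balancing map passing through the boundary is unavoidable), and verify from $G$'s coordinates that the clusters really do force $8$ alternating constraints and cannot be satisfied by any permissible reordering within a cluster. Handling the degenerate/boundary cases where a normal ray coincides with an eigendirection of $S$ or with the translation-induced dividing line, and where $\alpha$ is not orientation-preserving (reduced away above), also needs to be dispatched.
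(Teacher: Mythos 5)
Your reduction from projective to affine copies is the same as the paper's (apply $\pi_1^{*}$ to both so one copy becomes $G$ itself, then discard the perspective part since it only rescales each vector positively), but from there the proposal diverges from what actually works, and it contains two genuine problems. First, a factual error: you assert that for the identity map the $24$ rays of $G$ already interleave in the self-balancing pattern and that the task is to show no affine map can \emph{maintain} it. This is backwards, and it would contradict the very lemma being proved, since the self-balancing pattern for the identity is precisely $\bal(G,G)$, i.e.\ $G$ perfectly centered. The polygon $G$ is constructed so that this \emph{fails}: the outward normal of the edge from $(8,5)$ to $(7,7)$ has slope $\tfrac12$, which does not lie between the vertex slopes $\tfrac58$ and $1$, and by the square's reflection symmetry this violation occurs at $8$ places. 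The correct statement of the task is that an affine map would have to \emph{repair} all $8$ violations simultaneously, turning the relevant normal rays alternately clockwise and counterclockwise.

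Second, and more importantly, your contradiction rests on making the ``$4+2+1=7$ regions versus $8$ alternations'' count rigorous, which you yourself flag as the hard part but do not supply. The paper treats that count only as a heuristic explaining the choice of $G$ (``naively adding this up we get 7 regions'') and does not use it in the proof, for good reason: for a finite (non-infinitesimal) affine map the angular displacements of the rotation, spd, and translation parts do not combine additively, the translation's effect on a ray's direction depends on the radius of the point and not just its direction, and a homotopy from the identity to the balancing map gives no well-defined winding constraint (intermediate maps need not balance anything, and an image vector can even pass near the opposite ray). The paper's actual argument is elementary linear algebra: writing $T$ with entries $a,b,c,d,s,t$ acting on the scaled polar $21G^*$, the sign conditions $a,d>0$ and the $8$ slope conditions on the images of $(2,1)$ under the conjugates of $T$ by the reflection group form a homogeneous system of $10$ strict linear inequalities, and the $10\times 6$ coefficient matrix has all its columns orthogonal to the strictly positive vector $[1;\dots;1;8;8]$, so the system is infeasible (a Gordan/Stiemke-type duality argument). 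Your proposal would need to be replaced by, or completed with, an argument of this explicit kind; as it stands the central step is missing and the route chosen for it does not appear to be rigorizable.
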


\begin{proof} The polar $G^*$ has vertices $(\frac{1}{8},0)$, $(\frac{2}{21},\frac{1}{21})$ and all permutations and changes of sign. Balance is invariant under positive scaling, so we scale $G^*$ by 21 for convenience, giving vertices $(\frac{21}{8},0)$, $(2,1)$ instead.  We see that $G$ is not perfectly centered since that would require the slope $m$ of the outward normal vector of the edge between $(8,5)$ and $(7,7)$ to be between the slopes of the vertices, ${\frac{5}{8}< m < 1}$, but the slope is ${m=\frac{1}{2}<\frac{5}{8}}$ as seen from the vertex $(2,1)$ of $21 G^*$.  By construction, the reflection group of $G$ and $G^*$ is the same as that of the unit square, and is given by the matrices 
\[ \begin{array}{cccc}
\left[ \begin{array}{cc}
1 & 0 \\
0 & 1 \\
\end{array} \right] &
\left[ \begin{array}{cc}
0 & 1 \\
1 & 0 \\
\end{array} \right] &
\left[ \begin{array}{cc}
0 & -1 \\
1 & 0 \\
\end{array} \right] &
\left[ \begin{array}{cc}
-1 & 0 \\
0 & 1 \\
\end{array} \right] \phantom{.} \\
\\
\left[ \begin{array}{cc}
-1 & 0 \\
0 & -1 \\
\end{array} \right] &
\left[ \begin{array}{cc}
0 & -1 \\
-1 & 0 \\
\end{array} \right] &
\left[ \begin{array}{cc}
0 & 1 \\
-1 & 0 \\
\end{array} \right] &
\left[ \begin{array}{cc}
1 & 0 \\
0 & -1 \\
\end{array} \right] . \\
\end{array} \]
These transformations give us a total of 8 places where the perfectly centered condition is violated.  
Consider an affine transformation $T$ acting on $21G^*$ by  
\[ T \left[\begin{array}{c} x \\ y \\ \end{array}\right]:=
\left[\begin{array}{cc}
a & b \\
c & d \\
\end{array}\right]
\left[\begin{array}{c} x \\ y \\ \end{array}\right]
+\left[\begin{array}{cc}
s \\
t \\
\end{array}\right] . \]
For $\bal(G,T^{-1} G) = \bal(T(21G^*)^*,G)$ to hold, 
we must have $a,d>0$.  
For $a > 0$, this is because $T \left( \frac{21}{8},0\right)$ must point to the right and $T \left( -\frac{21}{8},0\right)$ must point to the left, 
\[T\left( \tfrac{21}{8},0\right)_1= \tfrac{21}{8}a+s >0, \quad
  T\left(-\tfrac{21}{8},0\right)_1=-\tfrac{21}{8}a+s <0 . \]  
If ${s\leq 0}$ then the first inequality implies ${a>0}$, and if ${s\geq 0}$ then the second inequality implies ${a>0}$.  The same holds for $d$ because of the corresponding inequalities in the ${2^\text{nd}}$ coordinate. 

Additionally, the image of $(2,1)$ must have slope greater than ${\frac{5}{8}}$, and this must also be the case for $T$ conjugated by all elements of the reflection group.  
The image of $(2,1)$ by all conjugates is 
\[\begin{array}{cccc}
\left[\begin{array}{c}
 a2+b1+s \\
 c2+d1+t \\
\end{array}\right] &
\left[\begin{array}{c}
 d2+c1+t \\
 b2+a1+s \\
\end{array}\right] &
\left[\begin{array}{c}
 d2-c1-t \\
 -b2+a1+s \\
\end{array}\right] &
\left[\begin{array}{c}
 a2-b1-s \\
 -c2+d1+t \\
\end{array}\right] \phantom{.} \\
\\
\left[\begin{array}{c}
 a2+b1-s \\
 c2+d1-t \\
\end{array}\right] &
\left[\begin{array}{c}
 d2+c1-t \\
 b2+a1-s \\
\end{array}\right] &
\left[\begin{array}{c}
 d2-c1+t \\
 -b2+a1-s \\
\end{array}\right] &
\left[\begin{array}{c}
 a2-b1+s \\
 -c2+d1-t \\
\end{array}\right] . \\
\end{array} \]
For the first of these vectors the slope requirement is given by the following inequality
\[ \frac{T(2,1)_2}{T(2,1)_1} = \frac{c2+d1+t}{a2+b1+s}>\frac{5}{8}. \] 
Equivalently, $-10a-5b+16c+8d-5s+8t>0$. 
Putting the inequalities we get from all these slope requirements with the sign requirements of $a,d$ together we get the matrix inequality 
\[
\left[\begin{array}{cccccc}
-10 & -5 & 16 & 8 & -5 & 8 \\
8 & 16 & -5 & -10 & 8 & -5 \\
8 & -16 & 5 & -10 & 8 & 5 \\
-10 & 5 & -16 & 8 & 5 & 8 \\
-10 & -5 & 16 & 8 & 5 & -8 \\
8 & 16 & -5 & -10 & -8 & 5 \\
8 & -16 & 5 & -10 & -8 & -5 \\
-10 & 5 & -16 & 8 & -5 & -8 \\
1 & 0 & 0 & 0 & 0 & 0 \\
0 & 0 & 0 & 1 & 0 & 0 \\
\end{array}\right] 
\left[\begin{array}{c}
a \\ b \\ c \\ d \\ s \\ t \\
\end{array}\right]
> 
0 \ . \]
Finding a solution to this inequality amounts to finding a vector in the column space of the matrix that has all positive entries.
The columns of this matrix, however, are all orthogonal to ${[1;1;1;1;1;1;1;1;8;8]}$, which has all positive entries, so the column span of the matrix is outside of the open positive orthant, which implies that no values for $a,b,c,d,s,t$ satisfy all of these inequalities. Therefor, there is no affine transformation $T$ such that 
$\bal(G, T^{-1} G)$.     

If there were projective transformations $\pi_1$, $\pi_2$ such that $\bal(\pi_1(G), \pi_2(G))$, then we would have $\bal(G, \pi^{*}_1  \pi_2 (G))$, and the affine part of $\pi^{*}_1  \pi_2$ would balance $G$ with itself, which we have just seen to be impossible.  Thus, no two projective copies of $G$ are balanced. 
\end{proof}

We will now show that faces projectively inherit balance. 
For some intuition why, notice that an 
interval polytope of $\mc{P}$ has among its faces all of the interval polytopes of the faces of $\mc{P}$. 
By Theorem~\ref{the:bpdef}, if a combinatorial polytope $\mc{P}$ has a balanced pair, then it has an interval polytope, which means all of its faces also have interval polytopes. 
By the other direction of Theorem~\ref{the:bpdef}, we get a balanced pair for any face of $\mc{P}$.  
Hence, if a combinatorial polytope has a balanced pair, then each of its faces also has a balanced pair. 
But to show projective inheritance, we need slightly more.  We need the faces of a balanced pair $(P_1,P_2)$ to have projective copies that are balanced. 

To find projective copies of some face of a balanced pair $(P_1,P_2)$, we perform the natural geometric analog of the above argument.  We construct the antiprism $A$ with bases $P_1,P_2$, then the polar dual of a facet of $A^*$ will be an antiprism $A_f$ having balanced projective copies of the corresponding faces $f$ of $P_1,P_2$ as bases.  

Before proving Lemma~\ref{lem:bpi}, we present the basic algebraic rules relating face cones, normal cones, and polar duality for polytopes when working in subspace and direct sums of vector spaces.
For a cone $C$ in a vector space $V$, let $C^\diamond \subset \linspan(C)^*$ denote the polar dual of $C$ regarded as a polytope in the vector subspace spanned by $C$.  Clearly if $C$ has full dimension then $C^\diamond = C^*$. 
Given two vector spaces $V_1,V_2$, let $V_1 \oplus V_2$ denote the direct product.  
For $a_i \in V_i^*$ let $a_1 \oplus a_2$ be the linear functional on $V_1 \oplus V_2$ defined by 
$[a_1 \oplus a_2] (x_1, x_2) = a_1(x_1) + a_2(x_2) $.
Note that if $V_1,V_2 \subset V$ are complimentary vector subspaces, then $V_1 \oplus V_2 = V$, and we may consider $V_1^*$ as being orthogonal to $V_2$ in this sense that $V_1^* \oplus \vec{0} \subset V^*$ is the subspace of linear functionals that vanish on $V_2$.
Recall that the set of translations of an affine space $X$ form a vector space, which we denote $\trans(X)$.
For a polytope $P \subset X$, the tangent cone $\ncone(P,f)^* \subset \trans(X)$ is the cone generated by all translations that send some point on the face $\face(P,f)$ to a point in $P$, and the normal cone $\ncone(P,f) \subset \trans(X)^*$ is the polar dual of the tangent cone.
For a flat $V \supset P$ of $X$ and a point $p \in V$, let $i_{V,p}(P)$ denote $P$ regarded as a polytope in the vector space $V$ with origin $p$.

\begin{prop}\label{prop:rules}
For cones $C_1,C_2$ spanning complementary subspaces, 
\[ \begin{array}{r@{\ }c@{\ }l}
 \labl(C_1 + C_2) &=& \labl(C_1) \catprod \labl(C_2)  \\
 (C_1 + C_2)^* &=& C_1^\diamond \oplus C_2^\diamond \\
 \face(C_1 + C_2,(f_1,f_2)) &=& \face(C_1,f_1) + \face(C_2,f_2) \\
\end{array} \]
For a centered polytope $P$ with face indices $g \leq f$ and $r>0$,
\[ \begin{array}{r@{\ }c@{\ }l}
 \labl(\cone(P,f)) &=& [\bot, f] \\
 \face(\cone(P,f),g) &=& \cone(\face(P,g)) \\
 \ncone(P,f) &=& \cone(P^*,f^*) \\
 \ncone(\face(P,f),g) &=& \face(\ncone(P,g)^*,f)^* \\
 \cone([P; r])^* &=& \cone([P^*;  {-1}/{r}]). \\
\end{array} \]
For a polytope $P$ in affine space with a face $f$, a point $p \in P$, and a subspace $V \supset P$, 
\[ \begin{array}{r@{\ }c@{\ }l}
 \ncone(i_{V,p}(P),f)^* &=& p + \ncone(P,f)^*. \\
\end{array} \]
\end{prop}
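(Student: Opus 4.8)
The plan is to verify each identity separately by unwinding the definitions of $\face$, $\cone$, $\ncone$, the tangent cone $\ncone(\cdot,\cdot)^*$, and the polar; most are pure bookkeeping, and I would only spell out the nontrivial direction in each case. Throughout I would use two standard facts: (i) a linear functional with a finite maximum over a pointed cone attains that maximum --- necessarily the value $0$, since the cone is closed under positive scaling --- exactly on the face it selects; and (ii) for a centered polytope $Q$ the normal fan of $Q$ is the face fan of $Q^*$, with $\ncone(Q,f)=\cone(Q^*,f^*)$ and the face lattices anti-isomorphic \cite[Section~7]{ziegler2007lecture}. Fact (ii) is exactly the identity $\ncone(P,f)=\cone(P^*,f^*)$, and the pair $\labl(\cone(P,f))=[\bot,f]$ together with $\face(\cone(P,f),g)=\cone(\face(P,g))$ just records the face structure of a cone over a polytope whose affine hull misses the origin: $P$ being centered forces $\vec 0\notin\operatorname{aff}\face(P,f)$ for every proper $f$, so $\cone(P,f)$ is pointed and (for $g\le f$) its face indexed by $g$ is the cone over the face of $\face(P,f)$ indexed by $g$.

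For the three identities about cones $C_1,C_2$ spanning complementary subspaces $V_1,V_2$, the key observation is that every functional on $V_1\oplus V_2$ is uniquely of the form $a_1\oplus a_2$, and $a_1\oplus a_2$ is bounded above on $C_1+C_2$ if and only if each $a_i$ is bounded above on $C_i$ (test with the other summand set to $\vec 0$), in which case both suprema equal $0$ and the supremum of $a_1\oplus a_2$ over $C_1+C_2$ is their sum. Hence the face of $C_1+C_2$ selected by $a_1\oplus a_2$ is $\face(C_1,f_1)+\face(C_2,f_2)$, containment of faces is coordinatewise, so $\labl(C_1+C_2)=\labl(C_1)\catprod\labl(C_2)$; and feeding the same splitting into the defining inequality $\langle a_1\oplus a_2,\,x_1+x_2\rangle\le 0$ of the polar yields $(C_1+C_2)^*=C_1^\diamond\oplus C_2^\diamond$.

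The homogenization identity $\cone([P;r])^*=\cone([P^*;-1/r])$ is a one-line computation: a point of $\cone([P;r])$ is $t[x;r]$ with $t\ge 0$ and $x\in P$, so $[a;\sigma]$ lies in its polar iff $\langle a,x\rangle\le-\sigma r$ for every $x\in P$; since $\vec 0\in P$ and $r>0$ this forces $\sigma\le 0$, and the condition then reads $a\in(-\sigma r)P^*$, i.e.\ $[a;\sigma]\in\cone([P^*;-1/r])$. The identity $\ncone(i_{V,p}(P),f)^*=p+\ncone(P,f)^*$ is bookkeeping about the origin: the tangent cone, being a cone of \emph{translation} vectors, is unaffected by which point of the flat $V$ is declared the origin, so $\ncone(i_{V,p}(P),f)^*$ and $\ncone(P,f)^*$ are literally the same cone of translations; re-expressing that cone as a subset of the flat $V$, in which the zero translation corresponds to the point $p$, reinstates the shift by $p$.

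I expect the one identity carrying real content to be $\ncone(\face(P,f),g)=\face(\ncone(P,g)^*,f)^*$, which entangles a face of a face with tangent cones and polars. First I would note that $\ncone(P,g)^*=\tcone(P,g)$ has face lattice the interval $[g,\top]$, so the face indexed by $f\ge g$ is defined; then I would show straight from the definition of the tangent cone that $\face(\tcone(P,g),f)=\tcone(\face(P,f),g)$, the tangent cone of $F:=\face(P,f)$ at its subface $g$ --- a direction in $\tcone(P,g)$ lies on the face selected by (a functional in the relative interior of the normal cone of) $f$ exactly when the corresponding secant segment issuing from a relative-interior point of $\face(P,g)$ stays inside $F$. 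Polarizing this and using $\tcone(F,g)^*=\ncone(F,g)$ by definition closes the identity. The point needing care is the ambient-versus-intrinsic linear-span bookkeeping when taking the polars --- whether $^*$ or $^\diamond$ is the appropriate notation --- together with the degenerate cases $g=f$ and $f=\top$; these are handled by the fact that the tangent cone of a polytope at any nonempty face has the full translation space of its affine hull as linear span, so the intrinsic and the ambient polar agree up to the orthogonal complement that the $^*$ in the statement is meant to include.
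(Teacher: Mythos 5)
Your verification is correct. The paper gives no proof of Proposition~\ref{prop:rules} at all (it explicitly states the rules ``may easily but tediously verified by the reader''), and your definition-unwinding supplies exactly that routine check, including the one identity with genuine content, $\ncone(\face(P,f),g)=\face(\ncone(P,g)^*,f)^*$, which you correctly reduce to $\face(\ncone(P,g)^*,f)=\tcone(\face(P,f),g)$ via a relative-interior point of $\face(P,g)$, with the ambient-polar bookkeeping handled appropriately.
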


We do not include a proof of Proposition~\ref{prop:rules} here.  
These rules may easily but tediously verified by the reader.

\begin{lem}\label{lem:bpi}
Faces projectively inherit the predicate $\bal$. 
\end{lem}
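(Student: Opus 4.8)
The plan is to realize an antiprism of $\face(P_1,f)$ as the polar dual of a face of an antiprism of $P_1$, arranged so that its two bases are a prescribed projective copy of $\face(P_1,f)$ and a polar of a prescribed projective copy of $\face(P_2,f)$, and then to read off a balanced pair using Theorem~\ref{the:bpdef}. It is convenient to first reduce to the case that $f$ is a facet of $\mc{P}$, by strong induction on the codimension of $f$ (the statement being quantified over all combinatorial polytopes): if $f$ has codimension at least $1$, pick a facet $f'$ of $\mc{P}$ with $f\leq f'$; the induction hypothesis in codimension $1$ gives projectivities $\pi_1',\pi_2'$ with $\bal(\pi_1'\face(P_1,f'),\pi_2'\face(P_2,f'))$, and since $\face(\pi_i'\face(P_i,f'),f)=\pi_i'\face(P_i,f)$ while $f$ has codimension one less inside $\face(\mc{P},f')$, a second application of the induction hypothesis to this balanced pair finishes the reduction. (Alternatively one may treat a general face $f$ directly: the construction below works verbatim with $\face(A^*,(\bot,f^*))$ in place of a facet of $A^*$.)

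For the facet case, given $\bal(P_1,P_2)$, Theorem~\ref{the:bpdef} says $A:=[P_1;0]\cv[P_2^*;1]$ is an antiprism of $P_1$, that is, a realization of the abstract antiprism of $\mc{P}$ whose two bases are copies of $P_1$ and of $P_2^*$. Applying a projectivity I may assume $A$ is centered, and since a projectivity restricts to a projectivity on each base, the bases of this centered $A$ remain projective copies of $P_1$ and of $P_2^*$. Now pass to the polar $A^*$, a realization of the interval polytope of $P_1$. Identifying $(g,h^*)\in\mc{P}\catprod\mc{P}^*$ with the interval $[g,h]$ and using $\labl(A^*)=(\text{abstract antiprism of }\mc{P})^*$, a short computation in the face poset shows that the facet $J$ of $A^*$ dual to the vertex of $A$ corresponding to the facet $f$ of $P_1$ realizes the interval polytope of $\face(\mc{P},f)$. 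Translating $J$ to be centered (harmless, since this changes $J^*$ only by a projectivity) and taking the polar, $A_f:=J^*$ is a realization of the abstract antiprism of $\face(\mc{P},f)$, i.e.\ an antiprism of $\face(P_1,f)$.

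The heart of the argument, and the step I expect to be the main obstacle, is to identify the two bases of $A_f$ geometrically: one should be shown to be a projective copy of $\face(P_1,f)$ and the other a polar of a projective copy of $\face(P_2,f)$. This amounts to tracking faces, face cones, normal cones, and polars along the chain $P_1,P_2\rightsquigarrow A\rightsquigarrow A^*\rightsquigarrow J\rightsquigarrow A_f$, which is exactly what the algebra assembled in Proposition~\ref{prop:rules} is for: the behaviour of $\cone$, $\ncone$, the $\diamond$-polar and $\oplus$ on sums of cones spanning complementary subspaces, together with the identities $\cone([P;r])^*=\cone([P^*;-1/r])$, $\ncone(P,f)=\cone(P^*,f^*)$, $\ncone(\face(P,f),g)=\face(\ncone(P,g)^*,f)^*$, and $\ncone(i_{V,p}(P),f)^*=p+\ncone(P,f)^*$; Proposition~\ref{prop:polarproj} is used to carry the centering projectivity through the polar dualizations. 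Care is needed throughout because faces of a centered polytope need not be centered, so several of the polytopes in the chain must be translated to center them before dualizing, each translation costing only a harmless projectivity.

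Finally, $A_f$ is a realization of the abstract antiprism of $\face(\mc{P},f)$ one of whose bases is $\projeq\face(P_1,f)$ and the other a polar of a projective copy of $\face(P_2,f)$. By Lemma~\ref{lem:prism} and the proof of Theorem~\ref{the:bpdef}, any realization of this abstract antiprism is projectively equivalent to $[Q_1;0]\cv[Q_2^*;1]$ for a balanced pair $\bal(Q_1,Q_2)$ whose bases $Q_1$ and $Q_2^*$ are matched, under the projectivity, to those of $A_f$. Hence $Q_1\projeq\face(P_1,f)$ directly, and $Q_2\projeq\face(P_2,f)$ by Proposition~\ref{prop:polarproj} (up to an orientation reversal if necessary) applied to the statement that $Q_2^*$ is projectively equivalent to a polar of a projective copy of $\face(P_2,f)$. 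Choosing projectivities $\pi_1,\pi_2$ realizing these two equivalences yields $\bal(\pi_1\face(P_1,f),\pi_2\face(P_2,f))$, which is precisely the assertion that faces projectively inherit $\bal$.
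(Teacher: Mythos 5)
Your construction is the same as the paper's: form the antiprism $A$ of the balanced pair, polarize, pass to the facet of $A^*$ conjugate to the face of the $P_2^*$-base determined by $f$, re-polarize inside its affine span to get $A_f$, an antiprism of $[\bot,f]$, and then read off a balanced pair via Lemma~\ref{lem:prism} and Theorem~\ref{the:bpdef}. (Your preliminary induction on codimension is valid but unnecessary; the paper runs the construction for an arbitrary face $f$ directly, exactly as your parenthetical remark suggests.)

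The genuine gap is the step you yourself flag as ``the heart of the argument'' and then do not carry out: showing that one base of $A_f$ is a projective copy of $F_1=\face(P_1,f)$ and the other is a polar of a projective copy of $F_2=\face(P_2,f)$. This is not a routine bookkeeping exercise to be delegated to Proposition~\ref{prop:rules}; it is where essentially all of the paper's proof lives. Concretely, the paper establishes it by the two displayed chains of identities, computing $\cone(\tilde F_1)^* = p+\cone([F_1;1])^\diamond\oplus\vec{0}$ (whence $\cone(\tilde F_1)=\proj_V(p+\cone([F_1;1]))$, giving $\tilde F_1\projeq F_1$) and $\cone(\tilde F_2^*)^* = p+\cone([F_2;1])$, which is then compared with $\cone(\tilde F_2^*)^*=\cone\bigl(i_{V,p}(\tilde F_2)-(1+\|q\|^{-2})q\bigr)$ to get $\tilde F_2\projeq F_2$. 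These computations hinge on the fact that $\cone(A,(f,\bot))$ and $\cone(A,(\bot,f^*))$ span complementary linear subspaces, which is why the paper places the bases at heights $+1$ and $-1$ (so $A$ is centered with this splitting built in) rather than at heights $0$ and $1$ followed by an unspecified centering projectivity as in your setup; after an arbitrary centering projectivity that complementarity, and hence the applicability of the $\oplus/\diamond$ rules, is not automatic and would itself need an argument (Proposition~\ref{prop:polarproj} alone does not supply it). As written, your proposal proves that $A_f$ is an antiprism of $[\bot,f]$, hence that the combinatorial type $[\bot,f]$ admits \emph{some} balanced pair, but not that the balanced pair consists of projective copies of the specific faces $\face(P_1,f)$ and $\face(P_2,f)$ — and the latter is exactly what projective inheritance of $\bal$ requires.
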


\begin{proof}
Let 
$P_1,P_2$ be a pair of centered realizations of $\mc{P}$ such that $\bal(P_1, P_2)$.  
By Lemma~\ref{lem:prism} $P_1,P_2$ are base faces of an antiprism $A = [P_1; 1] \cv [P_2^*; -1]$.
Note that the faces of $A$ are 
\[ \face(A,(g,f^*)) = \face\left([P_1; 1],g\right) \cv \face\left([P_2^*; -1],f^*\right) \]
for $g \leq f$,
and $\cone([P_1; 1],f)$ and $\cone([P_2^*; -1],f^*)$ are in complementary linear subspaces.

For any $f\in \mc{P}$, let 
$A_f^* = \face(A^*,(\top,f))$, and let $A_f = i_{V,p}(A_f^*)^*$
where $V$ is the affine span of $A_f^*$, and $p$ is some point in the relative interior of $A_f^*$.
The faces of $A_f$ are indexed by $(h,g^*) \in {[\bot,f] \catprod [f^*,\bot^*]} \subset {\mc{P} \catprod \mc{P}^*} $ 
such that $h \leq g$. 
Let $F_i = \face(P_i,f)$,
$\tilde F_1 = \face(A_f,(f,f^*))$,
$\tilde F_2^* = \face(A_f,(\bot,\top))$,
and 
$\tilde F_2 = i_{W,q}(\tilde F_2^*)^*$ where $W$ is the affine span of $\tilde F_2^*$ and $q$ is some point in the relative interior of $\tilde F_2^*$. 
Observe that $A_f$ is an antiprism of the combinatorial polytope $[\bot,f]$, so 
by Lemma~\ref{lem:prism} again, we have $\bal(\tilde F_1, \tilde F_2)$. 

We claim that 
$\tilde{F}_1, \tilde{F}_2$ are projective copies of $F_1,F_2$ respectively.

\[ \begin{array}{r@{\ }c@{\ }l@{\vspace{3pt}}}
\cone(\tilde F_1)^*
& = & \cone(A_f,(f,f^*))^* \\
& = & \cone(i_{V,p}(\face(A^*,(\top,f)))^*,(f,f^*))^* \\
& = & \ncone(i_{V,p}(\face(A^*,(\top,f))),(f^*,f))^* \\
& = & p+\ncone(\face(A^*,(\top,f)),(f^*,f))^* \\
& = & p+\face(\ncone(A^*,(f^*,f))^*,(\top,f)) \\
& = & p+\face(\cone(A,(f,f^*))^*,(\top,f)) \\
& = & p+\face((\cone(A,(f,\bot))+\cone(A,(\bot,f^*)))^*,(\top,f)) \\
& = & p+\face(\cone(A,(f,\bot))^\diamond\oplus\cone(A,(\bot,f^*))^\diamond,(\top,f)) \\
& = & p+\face(\cone(A,(f,\bot))^\diamond,\top)\oplus\face(\cone(A,(\bot,f^*))^\diamond,f) \\
& = & p+\cone([F_1 ; 1])^\diamond\oplus\vec{0}. \\
\end{array} \]
Since $V$ is a translate of the orthogonal compliment of $\cone(A,(\bot,f^*))$, we have $\cone(\tilde F_1) = {\proj_V(p+\cone([F_1 ; 1]))}$ where $\proj_V$ denotes orthogonal projection into $V$. Thus, $\tilde{F}_1$ and $F_1$ are projectively equivalent. 

\[ \begin{array}{r@{\ }c@{\ }l@{\vspace{3pt}}}
\cone(\tilde F_2^*)^*
& = & \cone(A_f,(\bot,\top))^* \\
& = & p+\face((\cone(A,(\bot,\bot))+\cone(A,(\bot,\top)))^*,(\top,f)) \\
& = & p+\face((\vec{0} +\cone([P_2^* ; -1]))^*,(\top,f)) \\
& = & p+\face(\cone([P_2^* ; -1])^*,f) \\
& = & p+\face(\cone([P_2 ; 1]),f) \\
& = & p+\cone([F_2 ; 1]) \\
\end{array}. \]
We also have 
\[ \cone(\tilde F_2^*)^* = \cone\left(i_{V,p}(\tilde F_2 ) -(1+\|q\|^{-2})q\right) ,\] 
where $q$ is regarded as a vector in $V$ with origin $p$.
Thus, $\tilde{F}_2$ and $F_2$ are projectively equivalent, so the claim holds.  
This implies that faces projectively inherit $\bal$. 
\end{proof}

\begin{thrm}
\label{the:bai}
In dimensions 4 or more there exists a combinatorial polytope that does not have a balanced pair, antiprism, or interval polytope. 
\end{thrm}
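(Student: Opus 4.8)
The plan is to combine the machinery already assembled in this section. The polygon $G$ from Lemma~\ref{lem:unbalp} has algebraic (in fact rational) vertex coordinates, so it is an algebraic polytope and Theorem~\ref{the:paf} applies to it. Let $\mc{S}_G$ be its stamp, a combinatorial $4$-polytope with a distinguished face $f_G$ such that in every realization $S$ of $\mc{S}_G$ the face $\face(S,f_G)$ is projectively equivalent to $G$. I claim $\mc{S}_G$ has no balanced pair.

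Suppose for contradiction that $\mc{S}_G$ does have a balanced pair, i.e.\ there are centered realizations $S_1,S_2$ of $\mc{S}_G$ with $\bal(S_1,S_2)$. By Lemma~\ref{lem:bpi} faces projectively inherit $\bal$, so applying this to the face $f_G$ yields projectivities $\pi_1,\pi_2$ with $\bal\big(\pi_1(\face(S_1,f_G)),\,\pi_2(\face(S_2,f_G))\big)$. But $\face(S_i,f_G)\projeq G$ by Theorem~\ref{the:paf}, so $\pi_i(\face(S_i,f_G))$ is itself a projective copy of $G$. Thus we have exhibited two projective copies of $G$ that are balanced, contradicting Lemma~\ref{lem:unbalp}. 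Hence $\mc{S}_G$ has no balanced pair, and by Theorem~\ref{the:bpdef} it has no antiprism, and consequently (since the antiprism is the combinatorial dual of the interval polytope) no interval polytope. This handles dimension $4$.

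For dimensions $d>4$, I would pass to an iterated pyramid: if $P$ has no balanced pair then neither does $\pyr(P)$, because a balanced pair for $\pyr(P)$ would restrict (via face projective inheritance of $\bal$, Lemma~\ref{lem:bpi}, applied to the base facet) to a balanced pair of projective copies of $P$; but $\bal$ for polytopes is preserved under projectivities in the sense needed here — or more directly, a balanced realization of $\pyr(P)$ yields an antiprism of $\pyr(P)$ by Theorem~\ref{the:bpdef}, whose appropriate facet is an antiprism of $P$, hence a balanced pair for $P$ by the converse direction of Theorem~\ref{the:bpdef}. Iterating, $\pyr^{d-4}(\mc{S}_G)$ is a combinatorial $d$-polytope with no balanced pair, antiprism, or interval polytope.

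The only genuinely delicate point is the construction underlying Theorem~\ref{the:paf} itself — the stamp polytope for $G$ — but that is precisely what Section~\ref{stamp} provides (and what Below's thesis already provides), so within the scope of this theorem it may be invoked as a black box. The remaining steps are routine assemblies of Theorems~\ref{the:paf} and~\ref{the:bpdef} with Lemmas~\ref{lem:unbalp} and~\ref{lem:bpi}; the small care needed is just checking that $G$ is algebraic (immediate, as its coordinates are integers) and that pyramids inherit the non-existence of balanced pairs, which follows from Theorem~\ref{the:bpdef} as indicated. I expect no serious obstacle beyond keeping the projective-inheritance bookkeeping straight.
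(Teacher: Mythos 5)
Your main argument is correct and is essentially the paper's own: the paper phrases the $4$-dimensional step as an invocation of Theorem~\ref{thrm:log_comb=proj} (with Lemma~\ref{lem:bpi} supplying projective inheritance and Lemma~\ref{lem:unbalp} the bad polygon), but the witness it names is the same stamp $\mc{S}_G$, and your direct unfolding — Theorem~\ref{the:paf} to pin the face $f_G$ to a projective copy of $G$, Lemma~\ref{lem:bpi} to push a hypothetical balanced pair of realizations of $\mc{S}_G$ down to balanced projective copies of $G$, Lemma~\ref{lem:unbalp} for the contradiction, then Theorem~\ref{the:bpdef} to rule out the antiprism and interval polytope — is exactly what that reduction amounts to in this instance. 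Your explicit handling of dimensions $d>4$ by iterated pyramids is more than the paper writes down, and your first justification of it is the right one: applying Lemma~\ref{lem:bpi} to the base facet of $\pyr(P)$, a balanced pair of realizations of $\pyr(P)$ yields balanced projective copies of realizations of $P$, hence a balanced pair for $P$.

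One caveat: the parenthetical ``more directly'' justification of the pyramid step is false. The antiprism of $\pyr(\mc{P})$ has no facet that is an antiprism of $\mc{P}$: in a realization $[P_0;0]\cv[P_1^*;1]$, the facet indexed by the pair $(b,b^*)$, with $b$ the base facet, is $\face(P_0,b)\cv\face(P_1^*,b^*)$, i.e.\ the join of a copy of $P$ with a single vertex of the polar — a pyramid over $P$, not an antiprism — and more generally the face of the abstract antiprism indexed by $(g,f^*)$ has face lattice $[\bot,g]\catprod[f,\top]^*$. The correct dual statement, which is how the paper motivates Lemma~\ref{lem:bpi}, is that the interval polytope of $\pyr(\mc{P})$ has a \emph{face} that is an interval polytope of $\mc{P}$ (equivalently, the antiprism of $\mc{P}$ appears as an upper interval, a vertex figure, of the antiprism of $\pyr(\mc{P})$, not as a face). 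Since your primary route through Lemma~\ref{lem:bpi} does not use this aside, the proof stands; just drop the alternative.
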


\begin{proof}
By Lemma~\ref{lem:bpi}, we can use Theorem~\ref{thrm:log_comb=proj} to reduce this problem to finding a polygon that cannot be balanced by projective transformations, and by Lemma~\ref{lem:unbalp}, $G$ is exactly such a polygon.  Hence, there is a combinatorial 4-polytope that does not have a balanced pair. 
Specifically, the stamp $\mc{S}_G$ from Theorem~\ref{the:paf} is such a polytope. 
And, by Theorem~\ref{the:bpdef} this polytope does not have an antiprism or an interval polytope. 
\end{proof}

\section{Stamps}
\label{stamp}

In this section we construct the stamp of a polytope.  This construction uses modified versions of constructions from Richter-Gebert's proof of universality for 4-polytopes \cite{richter1996realization}. 
The main tool we take is a method for encoding polynomials into a combinatorial polytope in such way that realizations of this combinatorial polytope correspond to solutions of the given polynomial. 
It may be helpful for the reader to have a copy of \cite{richter1996realization} on hand. 

One approach to constructing the stamp $\mc{S}_P$ of a polytope $P$ with algebraic coordinates is to first 
encode the minimal polynomial of each coordinate into a combinatorial polytope, using the same methods as the proof of universality.  
We additionally encode bounds on each coordinate that are sufficiently small to determine a unique root of the minimal polynomial.  
Then, we combine these combinatorial polytopes into a single combinatorial polytope $\mc{S}_P$ in a way that forces each coordinate of the  vertices of a certain face of $\mc{S}_P$ to equal the corresponding encoded roots. 
The actual construction will differ from this for reasons that will become clear in the proof, but the basic idea is similar to this approach.

\subsection{Gluing and Whittling} 

Throughout this section we will build up large combinatorial polytopes by combining smaller combinatorial polytopes.  The way we combine these is by gluing, and to a lesser extent whittling. 
Given a pair of combinatorial polytopes (or more generally bounded posets) $\mc{P}_0$ $\mc{P}_1$, a facet (coatom) $f_i \in \mc{P}_i$ of each, and an isomorphism $\varphi : [\bot, f_0] \to [\bot, f_1]$ between the respective face lattices of these facets, 
we \df{glue} $\mc{P}_0$ and $\mc{P}_1$ along these facets by removing the facets from each and identifying corresponding faces by the given isomorphism,
\[ \mc{P}_0\#_\varphi\mc{P}_1 \ 
:= \ (\mc{P}_0\setminus f_0)\sqcup(\mc{P}_1\setminus f_1)/\sim_\varphi \]
where $g_0 \sim_\varphi g_1$ when $\varphi(g_0) = g_1$ and $\top \sim_\varphi \top$, and  
the partial order on these equivalence classes is given by $\mc{G} \leq \mc{F}$ when 
$\exists g \in \mc{G}, \exists f \in \mc{F}.\: g \leq f$. 
We indicate this by a gluing diagram, which consists of a box for each poset and an edge between posets that are glued together, 
\[
\begin{tikzpicture}[baseline=(x0.base)]
\node at (0,0) [draw] (x0) {$\mc{P}_0$};
\node at (1,0) [draw] (x1) {$\mc{P}_1$};
\draw (x0) -- (x1);
\end{tikzpicture} \ \
:= \ \mc{P}_0\#_\varphi\mc{P}_1 . \] 
We see examples of polytopes glued together in 3 dimensions in Figure \ref{fig:connector}. 
We call the dual operation \df{whittling}, denoted $ \mc{P}_0\#^*_{\varphi}\mc{P}_1 := (\mc{P}_0^*\#_{\varphi}\mc{P}_1^*)^*$, and we say the vertex $v$ of $\mc{P}_0$ was whittled when $[v,\top]$ is the domain of $\varphi$. 
We will generally glue combinatorial polytopes along facets that are necessarily flat. 
A $d$-polytope is \df{necessarily flat} when any realization of its $(d\!-\!1)$-skeleton in a space of arbitrarily high dimension will be contained in a subspace of dimension $d$.  The following results are shown in \cite[Section 3.2]{richter1996realization}.

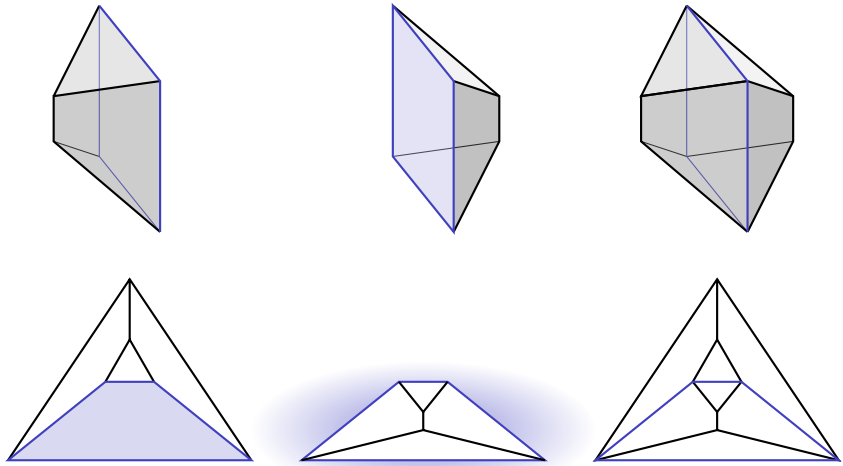
\begin{figure}[htb]

\centering
\begin{tikzpicture}

\colorlet{mycolor}{blue!50!gray}

\begin{scope}[scale=1]
 \path (-.4,0) coordinate (a)
 +(.8,-1) coordinate (b)
 +(.8,1)  coordinate (c)
 +(0,2) coordinate (d) 
 +(-.6,.2) coordinate (e) 
 +(-.6,.8) coordinate (f) 
 +(1.4,.2) coordinate (g)
 +(1.4,.8) coordinate (h);
\end{scope}

\matrix[row sep=6mm]
{
 \draw
  (a) -- (e); 
 \draw[mycolor] (d) -- (a) -- (b);
 \fill[fill=black!24,opacity=.4] (d) -- (f) -- (c);
 \fill[fill=black!48,opacity=.4] (b) -- (e) -- (f) -- (c); 
 \draw[thick] (b) -- (e) -- (f) -- (c)
  (d) -- (f);
 \draw[thick,mycolor] (b) -- (c) -- (d);
&

 \draw  (a) -- (g);
 \fill[black!12,opacity=.4]  (d) -- (h) -- (c);
 \draw[thick]  (d) -- (h);
 \filldraw[thick,fill=black!60,fill opacity=.4]  (b) -- (g) -- (h) -- (c);
 \filldraw[thick,mycolor,fill=mycolor!36,fill opacity=.4]  (b) -- (a) -- (d) -- (c) --cycle;
&

 \draw  +(e) -- +(a) -- +(g);
 \draw  +(c) -- +(f); 
 \draw[mycolor] (d) -- (a) -- (b);
 \fill[black!12,opacity=.4]  (d) -- (h) -- (c);
 \filldraw[thick,fill=black!24,fill opacity=.4]  +(d) -- +(f) -- +(c);
 \filldraw[thick,fill=black!48,fill opacity=.4] +(b) -- +(e) -- +(f) -- +(c); 
 \filldraw[thick,fill=black!60,fill opacity=.4] +(b) -- +(g) -- +(h) -- +(c);
 \draw[thick]  (d) -- (h);
 \draw[thick,mycolor] (b) -- (c) -- (d);
\\

\begin{scope}[scale=4/5]
 \path (-2,-.5) coordinate (a)
 +(4,0) coordinate (b)
 +(2.4,1.3) coordinate (c)
 +(1.6,1.3) coordinate (d)
 +(2,3) coordinate (e)
 +(2,2) coordinate (f)
 +(2,.5) coordinate (g)
 +(2,.8) coordinate (h);
\end{scope}

 \draw[thick] (a) -- (e) -- (b)
 (c) -- (f) -- (d)
 (e) -- (f); 
 \filldraw[thick,mycolor,fill=mycolor!20] +(a) -- +(b) -- +(c) -- +(d) -- cycle; 
&

 \shade[inner color=mycolor!60,outer color=white] (0,0) ellipse (2.25 and .9); 
 \filldraw[thick,draw=mycolor,fill=white]  +(a) -- +(b) -- +(c) -- +(d) -- cycle; 
 \draw[thick]  +(a) -- +(g) -- +(b)
 +(c) -- +(h) -- +(d)
 +(g) -- +(h); 
&

 \draw[thick]  +(a) -- +(e) -- +(b) -- +(g) -- cycle
 +(c) -- +(f) -- +(d) -- +(h) -- cycle
 +(e) -- +(f)
 +(g) -- +(h);
 \draw[thick,mycolor] +(a) -- +(b) -- +(c) -- +(d) -- cycle; 
\\
};

\end{tikzpicture}
\caption{
\textsc{Top:} A pair of polytopes and the result of gluing the pair together 
\textsc{Bottom:} A Schlegel diagram of each polytope.
}\label{fig:glue}
\end{figure}

\begin{lem}\label{lem:glue}
Any pair of polytopes with projectively equivalent facets, can be glued along those facets.\n 
Formally, for any facets $F_i$ of $P_i$ such that $F_1 \projeq F_2$, there exists a projectivity $\pi$ such that $\pi F_1 = F_2$ and 
\[\labl(\pi P_1 \cup P_2) = \labl(P_1) \#_{\varphi} \labl(P_1) \]
where $\varphi: \labl(F_1) \to \labl(F_2)$ is the isomorphism induced by $\pi$.
\end{lem}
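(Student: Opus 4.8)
The plan is to produce the projectivity $\pi$ explicitly, then verify that the face lattice of $\pi P_1 \cup P_2$ is exactly the amalgam $\labl(P_1)\#_\varphi\labl(P_2)$ by checking faces on each side of the gluing hyperplane. First I would use the hypothesis $F_1 \projeq F_2$: by definition this gives a projectivity $\pi_0$ with $\pi_0(\face(F_1,g)) = \face(F_2,g)$ for every index $g \in \labl(F_1)$, so in particular $\pi_0 F_1 = F_2$ as sets. The issue is that $\pi_0$ need not carry $P_1$ to the correct side of the common facet hyperplane, nor keep it bounded. So I would post-compose $\pi_0$ with a further projectivity fixing the affine hull $H$ of $F_2$ pointwise and mapping the open halfspace containing $\pi_0(P_1 \setminus F_1)$ to the open halfspace \emph{not} containing $P_2$; such a map exists (e.g.\ a suitable perspectivity or affine reflection-type map across $H$, composed with a translation/shear keeping things bounded away from the hyperplane at infinity relative to the compact set $\pi_0 P_1$), and it does not disturb $\pi_0 F_1 = F_2$. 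Call the composite $\pi$. Then $\pi F_1 = F_2$, $\pi$ restricted to $P_1$ is bounded, and $\pi P_1$ and $P_2$ lie on opposite sides of $H$ with $\pi P_1 \cap P_2 = \pi P_1 \cap H = P_2 \cap H = F_2$.

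Next I would show $Q := \pi P_1 \cup P_2 = \conv(\pi P_1 \cup P_2)$ is convex, hence a polytope. Since $\pi P_1$ and $P_2$ are convex and meet exactly in their common face $F_2 \subset H$, and lie in opposite closed halfspaces bounded by $H$, any segment between a point of $\pi P_1$ and a point of $P_2$ crosses $H$ at a point of $F_2$ (because $F_2$ is a face of both, the crossing point lies in both $\pi P_1 \cap H = F_2$ and $P_2 \cap H = F_2$), so the segment stays in $\pi P_1 \cup P_2$; this is the standard ``gluing along a facet'' convexity argument.

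Finally I would identify the face lattice. Every face of $Q$ not contained in $H$ lies entirely in the open halfspace of $\pi P_1$ or of $P_2$, and is a face of that summand disjoint from $F_2$; conversely every face of $\pi P_1$ not contained in $H$ is a face of $Q$ (supporting hyperplanes persist after tilting slightly toward the $P_2$ side), and likewise for $P_2$. Faces contained in $H$ are exactly the faces of $F_2$, which are simultaneously the faces of $\pi P_1$ contained in $H$ and the faces of $P_2$ contained in $H$, glued by the isomorphism $\varphi$ induced by $\pi|_{F_1}$. Translating this into index posets: removing the facet indices $f_1$ from $\labl(P_1)$ (pulled back through $\pi$) and $f_2$ from $\labl(P_2)$, and identifying the faces of $F_1$ with those of $F_2$ via $\varphi$, gives precisely $\labl(P_1)\#_\varphi\labl(P_2)$, with the induced order matching the containment order of faces of $Q$. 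I would cite \cite[Section 3.2]{richter1996realization} for this combinatorial bookkeeping, since gluing of polytopes along projectively equivalent facets is developed there.

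The main obstacle is the construction of $\pi$ itself: arranging simultaneously that $\pi$ realizes the prescribed face correspondence on $F_1 \to F_2$, that $\pi P_1$ lands on the correct side of $H$, and that $\pi|_{P_1}$ remains bounded (the hyperplane at infinity must avoid the compact set $\pi P_1$). All three can be met because the set of projectivities fixing $H$ pointwise acts transitively enough on the two open halfspaces, and boundedness is an open condition that can be secured after the fact by a small affine perturbation; but making this precise without hand-waving is the delicate part.
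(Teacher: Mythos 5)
The genuine gap is the convexity step. The projectivity you construct only secures three things: the prescribed face correspondence $\pi F_1 = F_2$, that $\pi P_1$ lands in the halfspace of $H$ opposite $P_2$, and boundedness of $\pi$ on $P_1$. That is not enough for the conclusion, and your argument that the union is convex is a non-sequitur: for $x \in \pi P_1$, $y \in P_2$, the segment $[x,y]$ crosses $H$ at some point $z$, but $z$ merely lies on the segment, not in either polytope, so there is no reason that $z \in \pi P_1 \cap H$ or $z \in P_2 \cap H$, hence no reason that $z \in F_2$. Indeed the union of two polytopes that meet exactly in a common facet and lie in opposite closed halfspaces of its hyperplane is in general \emph{not} convex (two long thin simplices sharing a small facet form a ``V''; a segment joining their far vertices exits the union and meets $H$ far from $F_2$). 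Without convexity, $\labl(\pi P_1 \cup P_2)$ is not the face lattice of a polytope at all; and even when the union happens to be convex, if at some ridge of $F_2$ the dihedral angles of $\pi P_1$ and $P_2$ sum to exactly $\pi$, the two facets adjacent to that ridge merge and the face lattice is strictly coarser than $\labl(P_1) \#_\varphi \labl(P_2)$.

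The missing idea — and it is exactly the nontrivial content of the lemma, which the paper does not reprove but delegates to Richter-Gebert's treatment of connected sums in Section 3.2 of \cite{richter1996realization} — is that $\pi$ must additionally flatten $P_1$ against $H$: after arranging $\pi_0 F_1 = F_2$ and the correct side, compose with a projectivity fixing $H$ pointwise that squeezes the image of $P_1$ into an arbitrarily thin slab bordering $H$ (equivalently, pushes the hyperplane sent to infinity close to $H$ from the far side). This makes the dihedral angle of $\pi P_1$ at every ridge of $F_2$ as small as one likes, so the angle sums at all ridges are strictly less than $\pi$; one then proves the union is convex (e.g.\ by exhibiting it as an intersection of halfspaces: those of $P_2$ not supporting $F_2$, those of $\pi P_1$ not supporting $F_2$, and checking each supports the union), no two facets of the summands become coplanar, and only then does your final face-lattice bookkeeping go through. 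As written, your choice of $\pi$ does not ensure any of this, so the proof would fail at precisely the step that makes the lemma worth stating.
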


\begin{lem}\label{lem:unglue}
Any realization of a pair of combinatorial polytopes glued along a necessarily flat facet of each can be decomposed into the union of realizations of the given pair that intersect along the given facets.\n
Formally, for any facets $f_i$ of $\mc{P}_i$ and isomorphism $\varphi : [\bot, f_1] \to [\bot, f_2]$, if $[\bot, f_1]$ is necessarily flat then for all realizations $P$ of $\mc{P}_0\#_\varphi\mc{P}_1$, $P = P_1 \cup P_2$ where $P_i$ realizes $\mc{P}_i$ and $P_1 \cap P_2 = \face(P_1,f_1) = \face(P_2,f_2)$. 
\end{lem}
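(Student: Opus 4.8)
The plan is to produce the decomposition by slicing $P$ with a single hyperplane $H$, whose existence is exactly what the necessary-flatness hypothesis guarantees. Write the glued poset as $\mc{P}_0\#_\varphi\mc{P}_1$, with the identified (``ghost'') facet named $f_0\in\mc{P}_0$ and $f_1\in\mc{P}_1$, $\varphi\colon[\bot,f_0]\to[\bot,f_1]$, and fix a realization $P$, $d=\dim P$. The faces of $\mc{P}_0\#_\varphi\mc{P}_1$ strictly below the ghost facet are precisely the proper faces of the combinatorial $(d{-}1)$-polytope $[\bot,f_0]$, that is, its $(d{-}2)$-skeleton; these survive gluing, so $P$ realizes them. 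Since $[\bot,f_0]$ is necessarily flat, this realized $(d{-}2)$-skeleton lies in an affine hyperplane $H$. Set $W=\conv\{\text{realized vertices of }f_0\}\subseteq H$; carrying the $(d{-}2)$-skeleton of a $(d{-}1)$-polytope inside $\partial P$, it has $\dim W=d-1$, hence $\dim(P\cap H)=d-1$.

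First I would check that no facet of $P$ lies in $H$. If a facet $G$ realizing some $g\neq f_0$ of $\mc{P}_0$ satisfied $G\subseteq H$, then $H$ would be the supporting hyperplane of $G$ and $W\subseteq P\cap H=G$; so every vertex of $\mc{P}_0\#_\varphi\mc{P}_1$ below the ghost facet would be $\leq g$. Face lattices of polytopes are atomic, so the join of those vertices is $\leq g$; but the only face lying above all of them is $\top$, giving $\top\leq g$ — absurd. It follows that $H$ is not a supporting hyperplane of $P$, hence $\operatorname{relint}(P\cap H)\subseteq\operatorname{int}P$, so that $\partial P\cap H\subseteq\partial(P\cap H)$ and $P$ has vertices strictly on both sides of $H$.

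Next comes the crux: showing the cut is clean. Any facet of $\mc{P}_0$ that is adjacent to $f_0$ is realized with one of its facets lying in $W\subseteq H$, and, being convex, it then lies weakly on one side of $H$. I would propagate ``weakly on one side'' along the facet--ridge graph of each $\mc{P}_i$ (connected after deleting $f_i$, since the dual graph of a $d$-polytope is $d$-connected), using that a ridge of $\mc{P}_i$ not below $f_i$ is realized as a ridge of $P$ and that no facet of $P$ lies in $H$; the conclusion is that, after naming the closed halfspaces $H^{\geq}$ and $H^{\leq}$ appropriately, every realized vertex of $\mc{P}_0$ lies in $H^{\geq}$ and every realized vertex of $\mc{P}_1$ in $H^{\leq}$, the wall vertices being the only ones on $H$ (strict containment on each side being forced by $P\not\subseteq H^{\geq}$ and $P\not\subseteq H^{\leq}$). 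Since every edge of $\mc{P}_0\#_\varphi\mc{P}_1$ joins two vertices of $\mc{P}_0$, two of $\mc{P}_1$, or two vertices of $f_0$, no edge of $P$ crosses $H$; therefore $P\cap H=W$, and $W$ realizes $[\bot,f_0]\cong[\bot,f_1]$. I expect this separation step — pinning $P\cap H$ down to exactly $W$ and keeping $H$ out of the interior of every facet — to be the real obstacle; it is where necessary flatness, atomicity, and convexity have to be combined with some care, following the kind of local analysis of boundary complexes in \cite[Section~3.2]{richter1996realization}.

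Finally I would assemble the pieces. Put $P_1=P\cap H^{\geq}$ and $P_2=P\cap H^{\leq}$; then $P=P_1\cup P_2$ and $P_1\cap P_2=P\cap H=W=\face(P_1,f_0)=\face(P_2,f_1)$. Because no edge of $P$ crosses $H$, every face of $P$ lies in $H^{\geq}$ or in $H^{\leq}$, so the faces of $P_1$ are exactly the realized faces of $\mc{P}_0$ not below $f_0$, together with the realized proper faces of $f_0$ and with $W$ itself in the role of $f_0$ — a face lattice isomorphic to $\mc{P}_0$. Symmetrically $P_2$ realizes $\mc{P}_1$, which is the asserted decomposition.
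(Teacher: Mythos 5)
Your set-up is the right one and matches the intended argument (the paper does not prove this lemma itself; it cites Section~3.2 of Richter-Gebert, where a realization is likewise cut by the hyperplane $H$ spanned by the realized boundary complex of the ghost facet). The opening steps are essentially fine: necessary flatness puts the surviving proper faces of $f_0$ into a hyperplane $H$ (to get $\dim W=d-1$ you should add the one-line reason that distinct faces of a polytope have distinct affine hulls, so the realized $(d-2)$-faces of the ghost cannot all lie in a single $(d-2)$-flat), no facet of $P$ can lie in $H$, and $H$ meets the interior of $P$. But the step you yourself flag as the obstacle is a genuine gap, and the repair you propose would not work as stated. Propagation of ``lies weakly on one side of $H$'' along the facet--ridge graph is not a valid local inference: a facet sharing a ridge with a facet that lies weakly in $H^{\geq}$ can perfectly well cross $H$ (take a plane that leaves the top facet of a cube strictly above it while slicing through the four side facets), and $d$-connectivity of the dual graph does nothing to exclude this; moreover, even for the facets adjacent to the ghost, your local argument only shows each lies weakly on \emph{some} side, not that they all lie on the \emph{same} side. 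Since the final assembly (vertex separation, ``no edge crosses $H$'', $P\cap H=W$, and the identification of the two face lattices) all hangs on this separation claim, the proof is incomplete at its central point.

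The missing ingredient is global/topological rather than local. Let $\Sigma\subset H$ be the union of the realized proper faces of the ghost facet; it is a polytopal $(d-2)$-sphere contained in $\partial P\cap H$. Since $H$ meets the interior of $P$, the set $\partial P\cap H$ is exactly the relative boundary of the $(d-1)$-polytope $P\cap H$, hence also a $(d-2)$-sphere; a compact $(d-2)$-sphere embedded in a $(d-2)$-sphere is the whole sphere (closed by compactness, open by invariance of domain), so $\Sigma=\partial(P\cap H)$ and therefore $P\cap H=\conv(\Sigma)=W$. Now let $K_0$ and $K_1$ be the unions of the realized facets coming from $\mc{P}_0\setminus\{f_0\}$ and $\mc{P}_1\setminus\{f_1\}$; they cover $\partial P$, meet exactly in $\Sigma$ (a face below a $\mc{P}_0$-facet and a $\mc{P}_1$-facet must be an identified face below the ghost), and each is a $(d-1)$-ball with boundary $\Sigma$, so $K_i\setminus\Sigma$ is connected, misses $H$ (because $\partial P\cap H=\Sigma$), and hence lies strictly in one open halfspace, with the two sides distinct because $\partial P$ has points strictly on both sides of $H$. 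This is precisely the separation your propagation was meant to deliver, and from it your concluding paragraph (with $P_1=P\cap H^{\geq}$, $P_2=P\cap H^{\leq}$, facets of $P_i$ being the $K_i$-facets together with $W$) goes through. Also note, as a small point of hygiene, that ``convexity'' alone is not why a facet containing a ridge in $H$ lies weakly on one side: you need that a face equals the intersection of the polytope with its affine hull, so that such a facet meets $H$ exactly in that ridge.
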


\begin{lem}
Pyramids and Prisms of dimension at least 3 are necessarily flat.
\end{lem}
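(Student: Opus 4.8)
The plan is to handle pyramids and prisms separately, and in both cases to exploit the basic fact that the combinatorial type of a polytope determines its dimension, so that in any realization of the $(d-1)$-skeleton each face is realized as a polytope of its own combinatorial type and hence spans an affine flat of exactly the expected dimension. Since a realization of the $(d-1)$-skeleton is a union of cells, each being the convex hull of its vertices, it suffices in each case to exhibit a single flat of dimension at most $d$ containing every realized vertex.

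\emph{Pyramids.} Let $P=\pyr(Q)$ with apex $a$ over a $(d-1)$-polytope $Q$. In a realization of the $(d-1)$-skeleton the base facet $Q$ is realized as a $(d-1)$-polytope, so its vertices lie in some $(d-1)$-flat $H$; let $\hat a$ be the realized apex. Every vertex of $P$ is either $a$ or a vertex of $Q$, hence every realized vertex lies in $H\cup\{\hat a\}$, and therefore in $\operatorname{aff}(H\cup\{\hat a\})$, a flat of dimension at most $d$. So $P$ is necessarily flat; there is essentially nothing to prove here, and in fact the same argument works for every $d$.

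\emph{Prisms.} Let $P=Q\times[0,1]$ over a $(d-1)$-polytope $Q$, and write $\hat v^0,\hat v^1$ for the realized images of $(v,0)$ and $(v,1)$. The bottom base $Q\times\{0\}$ is realized as a $(d-1)$-polytope, so all $\hat v^0$ lie in a $(d-1)$-flat $H$. Fix a vertex $v_\ast$ of $Q$ and put $W:=\operatorname{aff}(H\cup\{\hat v_\ast^1\})$, of dimension at most $d$; the goal is to show every $\hat v^1\in W$. I would prove this by propagating along edge-paths of $Q$, which is legitimate since the graph of $Q$ is connected. The step that makes this work — and the only place the hypothesis $d\ge 3$ enters — is that for an edge $e=\{u,w\}$ of $Q$ the face $e\times[0,1]$ is $2$-dimensional, hence lies in the $(d-1)$-skeleton and is realized as an honest planar quadrilateral with vertices $\hat u^0,\hat w^0,\hat w^1,\hat u^1$. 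Because no three of the four vertices of a quadrilateral are collinear (a collinear triple of vertices would force the middle one to be a convex combination of the other two, contradicting extremality), the three points $\hat u^0,\hat w^0,\hat u^1$ are affinely independent and span the plane of that quadrilateral. Hence, if $\hat u^1$ is already known to lie in $W$ — the points $\hat u^0,\hat w^0$ lie in $H\subseteq W$ automatically — this whole plane lies in $W$, so $\hat w^1\in W$ as well. Starting from $\hat v_\ast^1$ and inducting along a path from $v_\ast$ to an arbitrary vertex $v$ yields $\hat v^1\in W$; thus all realized vertices lie in $W$, and so does the whole realization.

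I do not expect a genuine obstacle: the only ingredients are the ``combinatorial type determines dimension'' principle (so that facets and the vertical $2$-faces realize faithfully) and the elementary remark about quadrilaterals. The one point requiring care is the bookkeeping of which faces of $Q\times[0,1]$ belong to the $(d-1)$-skeleton — the vertical faces $G\times[0,1]$ for proper faces $G$ of $Q$ — since it is precisely the $2$-dimensional such faces $e\times[0,1]$ that carry the argument, and these are available exactly when $d\ge 3$. For $d=2$ the prism is a quadrilateral, whose $1$-skeleton can be realized as a skew quadrilateral in $\mathbb{R}^3$, so the dimension hypothesis is sharp for prisms.
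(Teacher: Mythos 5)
Your proof is correct: the pyramid case is immediate from taking the affine hull of the realized base together with the realized apex, and the prism case follows from your propagation along the planar quadrilateral $2$-faces (no three vertices of a convex quadrilateral are collinear, so three of its vertices span its plane) over the connected graph of the base, with the hypothesis $d\ge 3$ entering exactly where you say and being sharp because of the skew quadrilateral. The paper does not prove this lemma itself but cites Section 3.2 of \cite{richter1996realization}, and your argument is essentially the standard one given there, so there is nothing further to compare.
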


\subsection{Completion Conditions} 

The defining property of a stamp polytope is that a certain face is rigid up to projective transformations.  We will construct stamps from other polytopes with faces that are less constrained by gluing these polytopes together in ways that combine these constraints.  We call such constraints, completion conditions. 
Informally, the completion condition from a face to a polytope is the condition a realization of the face must satisfy to be completed to a realization of the entire polytope. 
Formally, for a bounded poset $\mc{P}$ and $f\in \mc{P}$, we say a realization $F$ of $[\bot,f]$ can be completed from $f$ to $\mc{P}$ when there exists a realization $P$ of $\mc{P}$ such that $F = \face(P,f)$, and 
we call any sequence of statements $\Gamma$ about a polytope $F$ the \df{completion condition} for $F$ from $f$ to $\mc{P}$ when $\Gamma$ holds if and only if 
$F$ can be completed from $f$ to $\mc{P}$. 
For example, the completion conditions for $F$ from $f_P$ to the stamp $\mc{S}_P$ is $F \projeq P$. 
For another example, if $\mc{P}$ is a bounded poset and $[\bot,f]$ is a combinatorial polytope, but $\mc{P}$ is not a combinatorial polytope, then the completion condition from $f$ to $\mc{P}$ is the logical sentence `False'. 

Similarly we define completion conditions for a collection of faces.  
Since we will only be concerned with the geometric properties of the faces, and not how they are positioned relative to eachother, and since projective equivalence is finer than combinatorial equivalence, we only consider each face up to its projective type. 
For $f_1,\dots,f_n \in \mc{P}$, we say respective realizations $F_1, \dots, F_n$ of $[\bot,f_1], \dots, [\bot, f_n]$ can be completed to $\mc{P}$ when there exists a realization $P$ of $\mc{P}$ and projectivities $\pi_i$ such that $\pi_i(F_i) = \face(P,f_i)$, and we call any sequence of statements $\Gamma$ about a collection of polytopes $F_1, \dots, F_n$ the \df{completion condition} from $f_1,\dots,f_n$ to $\mc{P}$ when $\Gamma$ holds if and only if these polytopes can be completed to $\mc{P}$.

\subsection{Visibility and Projective Space} 

Some of the completion conditions that will be used in the remainder of the paper 
may be understood geometrically as saying that certain faces of a polytope are visible from certain points in space around that polytope. 
We say a face is front visible (or simply ``visible'') from a point when that point can move along a straight line to any point on the face without hitting the polytope before reaching its destination. 
Formally, we define the visibility in terms of the convex join.  We say a face $f$ of $P$ is \df{front visible} from a point $p\nin P$ when the relative interior of the convex join of the face with the point is disjoint from the polytope, 
$ (p \cv \face(P,f))^\circ \cap P = \emptyset $.  
When a face is not front visible we say it is \df{front obscured}. 

We say a face $f$ of $P$ is back visible from $p$ when the point $p$ can move along a straight line in $\mb{R}^d$ to any point on the face $f$ of $P$ by first moving away from $P$, passing through the horizon at infinity, and then continuing towards $P$ from the opposite direction without hitting $P$ before reaching its destination.  
Formally, we define a face to be back visible from a point $p$ when it is front visible from the antipodal point in homogeneous coordinates. 
Observe that in homogeneous coordinates, a face $f$ of $P$ is front visible from a point $p\nin P$ when $ (\{r_1 [p;1] + r_2 \face([P;1],f) : r_i \geq 0\})^\circ \cap [P;1] = \emptyset $.  We say the face $f$ is \df{back visible} from $p$ when $ (\{{-}r_1 [p;1] + r_2 \face([P;1],f) : r_i \geq 0\})^\circ \cap [P;1] = \emptyset $.  Otherwise, we say the face is \df{back obscured}.
All together, this gives four possible answers to whether a face $f$ of $P$ is visible from a point $p$, which we denote as follows: 
\[ \vis(p,P,f) = 
\left\{\begin{array}{cl}
* & \text{doubly visible} \\
+ & \text{front only visible} \\
- & \text{back only visible} \\
0 & \text{doubly obscured.} \\
\end{array}\right.
\]
For $p \in \mb{R}^d$, let $p^-$ denote the point $p$ but with front and back visibility exchanged 
\[\vis(p^-,P,f) = -\vis(p,P,f),\] 
and let $(p^-)^- = p^+ = p$. 
Observe that if a projectivity is bounded and preserves orientation on $p \cv P$, then it preserves visibility for the pair $(p,P)$. 
If a projectivity is bounded and preserves orientation on $P$, but reverses orientation on $p$, then it exchanges front and back visibility.


We make use of two compactifications of $\mb{R}^d$.  First we compactify $\mb{R}^d$ by adjoining the sphere at infinity,
\[ \overline{\mb R}{}^d := \mb{R}^d \cup \left\{ \infty u  : u \in \mb{S}^{d-1} \right\} \]
where $\lim_{t \in \mb{N}} v_t = \infty u$ when 
$\lim_{t \in \mb{N}} [v_t;1]/\|[v_t;1]\| =  [u;0]$. 
We say the points $\infty u$ are on the \df{horizon}. 
For $v \in \mb{R}^d \setminus \vec{0}$, let $\infty v = \lim_{r \to \infty} rv = \infty(v/\|v\|)$. 
For $p = \infty u$ on the horizon, we exchange visibility by sending $p$ to the antipodal point on the horizon $p^- = -\infty u$. 
We denote the closure of $\overline{\mb R}{}^d$ under exchange of visibility by 
\[ \mb{O}^d := \overline{\mb R}{}^d \cup \{x^- : x \in \mb{R}^d \}  \]
where $\lim_{t \in \mb{N}} v_t^- = (\lim_{t \in \mb{N}} v_t)^-$.
Observe that $\mb{O}^d$ is homeomorphic to the sphere $\mb{S}^d$ by the map
\[ 
i_\mb{S} : \mb{O}^d \to \mb{S}^d, \quad
i_\mb{S}(x) := \left\{\begin{array}{rl@{\vspace{3pt}}} 
\tfrac{1}{\|[x;1]\|} [x;1] & x \in \mb{R}^d \\
\tfrac{{-}1}{\|[y;1]\|} [y;1] & x = y^-,\ y \in \mb{R}^d \\
 {[ u ; 0 ]} & x = \infty u,\ u \in \mb{S}^{d-1}. 
\end{array}\right.
\]

We additionally compactify $\mb{R}^d$ to the real projective space $\mb{P}^d := \mb{O}^d/\mb{Z}_2$,
the quotient of $\mb{O}^d$ by $p \equiv p^-$, which corresponds to the antipodal relation on the sphere, $i_\mb{S} (p^-) = {-}i_\mb{S} (p)$. 
In this context we call $\mb{O}^d$ \df{oriented projective space} \cite{stolfi1991oriented}. 
In projective space we have the advantage that the set of all flats form a complete lattice PG${}^d \subset 2^{\mb{P}^d}$ (the projective geometry on $\mb{R}^{d+1}$).  We denote the lattice closure and lattice operations by
\[ \projcl : 2^{\mb{O}^d} \to \text{PG}^d, \quad \projcl (X) = i_\mb{S}^{-1}(\linspan i_\mb{S} (X)\cap\mb{S}^d), \]
\[ {\vee},{\wedge} : \text{PG}^d \times \text{PG}^d \to \text{PG}^d, \quad 
a \vee b = \projcl (a \cup b), \quad a \wedge b = a \cap b. \]
real projective space has one more fundamental operation, cross ratio. 
For points $p$, $p_0$, $p_1$, $p_\infty$ on a line $\ell$ in $\mb{P}^d$, 
there is a the unique projectivity $\phi : \ell \to \mb{P}^1$ such that $\phi p_0 = 0$, $\phi p_1 = 1$, $\phi p_\infty = \infty$. 
The \df{cross ratio} 
$(p,p_1 | p_0, p_\infty)$ is the value of $\phi p \in \mb{P}^1 = \mb{R} \cup \infty$.

A basis of real projective space $\mb{P}^d$ is provided by a generic choice of $d{+}1$ point and 1 hyperplane.
There is a unique projectivity sending these these basis elements respectively to the origin $\vec{0}$, the linear basis vectors $e_1,\dots,e_d$, and the horizon $h_\infty$, which determines a coordinate system. 
We can also express the coordinates of a point in projective space in terms of the operations meet, join, and cross ratio applied to the basis elements as follows. 
Let $\infty_i = (\vec{0} \vee e_i) \wedge h_\infty$ 
denote the point on the horizon in the direction of $e_i$, and let  
\[
h_{0,i} = e_0 \vee \dots \vee e_{i-1} \vee \vec{0} \vee e_{i+1} \vee \dots \vee e_d, \quad h_{1,i} = e_i \vee (h_{0,i} \wedge h_\infty) 
\]
denote the facet supporting hyperplanes of the unit cube. 
We project a point $p$ into these hyperplanes by 
$\pi_{i,x}(p):= \left(p \vee \infty_i \right)\wedge h_{i,x}$.
For $p \nin h_\infty$ the $i^\text{th}$ coordinate is given by 
\[(p)_i:=\left(p , \pi_{i,1}(p) | \pi_{i,0}(p), \infty_i \right).\]

\subsection{Tents} 

When one base of an prismoid is an edge, we call the prismoid a \df{tent}.  This edge is called the \df{apex} of the tent, and we refer to the other base face exclusively as the base of the tent. 
An abstract prismoid $\mc{T}$ is an \df{abstract tent} when one base is a combinatorial edge $\mc{A}$, the apex, and the sides of $\mc{T}$ satisfy the following.  A face of the base $f \in \mc{B}$ forms a side with the top face of the apex $\top \in \mc{A}$ if and only if the face $f$ forms a side with either both apex vertices or neither apex vertex,
\[
\mc{T} \subset \mc{B}\catprod \mc{A}, \quad 
\mc{A} := \{+, -, \top,\bot\}, \quad \bot \leq \pm \leq \top, 
\]
\[ 
(f,\top) \in \mc{T} \Leftrightarrow 
((f,+)\in\mc{T} \Leftrightarrow (f,-)\in\mc{T}). 
\]
An abstract tent is completely determined by its sides. 
We may therefore define an abstract tent from a function $\chi$ that takes a base face and returns a value indicating the set of sides that include the given base face.  For any $\chi:\mc{B}\to \{+,-,0,*\}$,  let 
\[ a(f) = 
\left\{\begin{array}{l@{\,}l@{\,}l@{\,}rl}
 \{ +, & & & \bot\} & \chi(f) = + \\
 \{ & -, & & \bot\} & \chi(f) = - \\
 \{ & & \top, & \bot \} & \chi(f) = 0 \\
 \{ +, & -, & \top, & \bot\} & \chi(f) = *, \\
\end{array}\right.  
\]
\[\tent(\chi):= \{ (f,a) :f \in\mc{B}, a \in a(f) \}. \] 
The set of all abstract tents with a given base $\mc{B}$ is the range of the function `$\tent$' over all functions $\chi$ on $\mc{B}$ with $\chi(\bot) = *$.


\begin{figure}[th]
\centering

\begin{tikzpicture}

\path (0,0) coordinate (o)
 (90:1) coordinate (b0)
 (270:1) coordinate (b1)
 (0:2.4) coordinate (E)
 (180:2.4) coordinate (W);

\foreach \i in {0,...,8}
{
 \path (\i*40:1.8) coordinate (a\i);
}
\foreach \i in {0,...,8}
{
 \path (\i*40+20:2.8) coordinate (A\i);
}

\matrix[column sep=1.5cm]
{

\draw[thick, fill=black!6] (a0) -- (a1) -- (a2) -- (a3) -- (a4) -- (a5) -- (a6) -- (a7) -- (a8) -- cycle;

\foreach \i in {0,...,3,5,6,7,8}
{
 \draw[teal,->] (o) -- (A\i);
}
\draw[teal,->] (W) -- (A4);
\draw[violet,very thick,->] (o) -- (E); 
\draw[violet,very thick,->] (o) -- (W);
\draw[thick] (b0) -- (b1); 
&

\begin{scope}
\clip (a5) ++(0,-2.7) rectangle (a0);
\shade[inner color=blue!50!black!30,outer color=white] (o) circle (2.7);
\end{scope}

\draw[thick,fill=white] (a0) -- (a1) -- (a2) -- (a3) -- (a4) -- (a5) -- (a6) -- (a7) -- (a8) -- cycle;

\draw[olive] 
(b0) -- (a0)
(b0) -- (a1)
(b0) -- (a2)
(b0) -- (a3)
(b0) -- (a4)
;
\draw[blue!50!black]
(b1) -- (a5)
(b1) -- (a6)
(b1) -- (a7)
(b1) -- (a8)
(b1) -- (a0)
;
\draw[thick] (b0) -- (b1); 

\\
};
\end{tikzpicture}

\smallskip

\begin{tikzpicture}[x={(.8cm,-.25cm)},y={(-1.2cm,-.15cm)},z={(0cm,.5cm)},scale=1.5]

\draw[thick,fill=black!6]
(0:1) coordinate (a0) -- (45:1) coordinate (a1) -- (90:1) coordinate (a2) -- (135:1) coordinate (a3) -- 
(180:1) coordinate (a4) -- (240:1) coordinate (a5) -- (270:1) coordinate (a6) -- (300:1) coordinate (a7) -- (330:1) coordinate (a8) -- cycle
;
\path
(intersection cs: first line={(a4)--(a5)}, second line={(a0)--(1,-1)}) coordinate (p)
    ++(115:2.2) +(0,0,2.2) coordinate (b1)
(p) ++(115:3.9) +(0,0,3.9) coordinate (b0)
;

\draw[thick]
(b0) -- (b1)
;

\draw[densely dotted]
($(p)!1.6!(a0)$) -- (p) -- ($(p)!1.6!(a5)$)
(p) -- (b1)
;

\foreach \i in {0,...,4}
{
 \draw[blue!50!black] (a\i) -- (b0);
}
\foreach \i in {5,...,8,0}
{
 \draw[olive] (a\i) -- (b1);
}

\end{tikzpicture}

\caption{
\textsc{Top Left:} The normal fans of a 9-gon and a segment. 
\textsc{Top Right:} The tent sides determined by the common refinement of the normal fans, with back visible edges of the 9-gon shaded. 
\textsc{Bottom:} A tent with the point $p \in \projcl (B) \wedge \projcl (A)$.}
\end{figure}
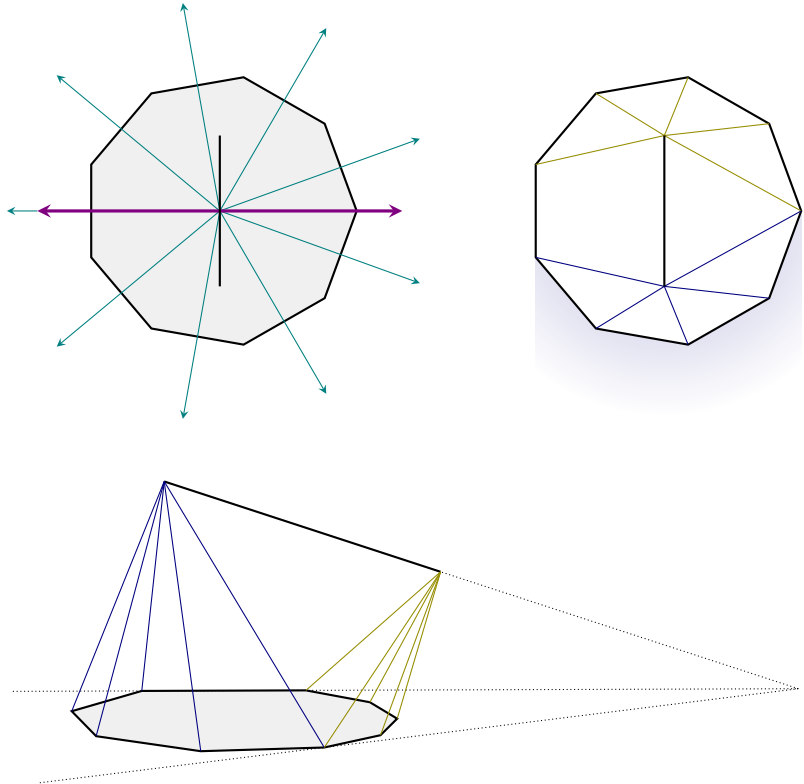

We now give the completion condition to a tent from its base. 
Specifically, this condition determines the visibility of the faces of the base $B$ from a point $p\nin B$. 
In this context, a tent may be called a Lawrence extension \cite{bayer1990lawrence}.  
An equivalent result appears in \cite[Section 3.3]{richter1996realization} 
along with further details of its history and use.

\begin{lem}\label{lem:tent} 
Given an abstract tent $\mc{T} = \tent(\chi)$ with base $\mc{B}$,
the completion condition for a realization $B$ of $\mc{B}$ to $\mc{T}$ from its base is that there be a projective copy $\tilde B$ of $B$ and a point $p$ such that  
for each face $f \in \mc{B}$ 
\[\vis(p,\tilde B,f) = \chi(f).\] 
Moreover, for a tent with base $B$ and apex $A$, $\projcl (B) \wedge \projcl (A) = \{p,p^-\}$ for $p \in \mb{O}^d$ satisfying the above.
\end{lem}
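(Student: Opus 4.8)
The plan is to obtain the completion condition directly from the prismoid criterion of Lemma~\ref{lem:prism}, once the normal fan of the apex is written down. By that lemma, a realization $B$ of $\mc{B}$ can be completed from the base to $\mc{T}=\tent(\chi)$ if and only if there is a projective copy $\tilde B$ of $B$, which I place in the hyperplane $\{x_d=0\}$, and a realization $A$ of the apex edge, which I place in the parallel hyperplane $\{x_d=1\}$, so that $\labl(\nfan(\tilde B)\wedge\nfan(A))^*=\side(\mc{T})$; equivalently, as in the proof of Lemma~\ref{lem:prism}, $(f,a)$ is a face of the tent exactly when $\ncone(\tilde B,f)^\circ\cap\ncone(A,a)^\circ\neq\emptyset$. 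An apex edge at height $1$ is determined, as far as its normal fan is concerned, by a direction $u$ lying in $\{x_d=0\}$; its two vertices then have normal cones the closed half-spaces $\{w:\pm\langle w,u\rangle\ge0\}$ with relative interiors $H^\pm=\{w:\pm\langle w,u\rangle>0\}$, and the apex edge itself has normal cone the hyperplane $u^\perp$, which is its own relative interior.

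The heart of the proof is to reinterpret this condition in terms of visibility from the horizon point $p=\infty u$. The key claim is that a face $f$ of $\tilde B$ is front visible from $\infty u$ if and only if $u$ leaves $\tilde B$ at $f$, i.e.\ $u\notin\ncone(\tilde B,f)^*$. Unwinding the convex-join definition, front visibility from $\infty u$ says that $x+tu\notin\tilde B$ for all $t>0$ and all $x$ in the relative interior of $\face(\tilde B,f)$; convexity of $\tilde B$ reduces this to the local condition $u\notin\ncone(\tilde B,f)^*$, and taking polars turns that into $\ncone(\tilde B,f)^\circ\cap H^+\neq\emptyset$. Symmetrically, back visibility from $\infty u$, which is front visibility from $(\infty u)^-=\infty(-u)$, is $\ncone(\tilde B,f)^\circ\cap H^-\neq\emptyset$; and a short convexity remark --- a relatively open convex cone contained in a closed half-space but not in its bounding hyperplane misses that hyperplane --- shows that $\ncone(\tilde B,f)^\circ$ meets $u^\perp$ exactly when $\vis(\infty u,\tilde B,f)\in\{0,*\}$. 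Hence the four values of $\vis(\infty u,\tilde B,f)$ encode the four positions of $\ncone(\tilde B,f)^\circ$ relative to $u^\perp$, and matching this against the definition of $\side(\tent(\chi))$, where $(f,+),(f,-),(f,\top)$ are sides precisely when $\chi(f)$ lies in $\{+,*\},\{-,*\},\{0,*\}$, the prismoid condition becomes $\vis(\infty u,\tilde B,f)=\chi(f)$ for every $f\in\mc{B}$. Since visibility of a pair $(p,\tilde B)$ is a projective invariant, and any point of $\projcl(\{x_d=0\})$ is carried to a horizon point of that hyperplane by a suitable projectivity, this is equivalent to the stated condition with an arbitrary projective copy $\tilde B$ and an arbitrary point $p$; the extreme faces $\bot,\top$ and the case where no completion exists need only an easy separate check.

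For the ``moreover'', in the normalized realization the line $\projcl(A)$ sits at height $1$ while $\projcl(B)=\projcl(\{x_d=0\})$ sits at height $0$, so these two projective flats meet only at the horizon, at the single point with homogeneous representatives $\pm[u;0]$ --- that is, $\{\infty u,(\infty u)^-\}=\{p,p^-\}$. Since $\projcl$, the lattice meet, and the tent structure are all preserved by projectivities, this identity descends to every realization of the tent.

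The main obstacle I anticipate is the dictionary of the second paragraph: making the correspondence between visibility from a point at infinity and the normal/tangent cone of a face precise and uniform over all four visibility values, with careful relative-interior bookkeeping --- passing between ``$\ncone(\tilde B,f)$ meets $H^\pm$'' and ``$\ncone(\tilde B,f)^\circ$ meets $H^\pm$'', handling the boundary case $\ncone(\tilde B,f)^\circ\subseteq u^\perp$, and orienting front versus back visibility the right way. After that, matching against the combinatorial definition of $\side(\tent(\chi))$ and quoting Lemma~\ref{lem:prism} is routine.
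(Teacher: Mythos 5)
Your proposal is correct and follows essentially the same route as the paper: reduce to the common-refinement condition of Lemma~\ref{lem:prism}, read the segment's normal fan off a direction $u$, and translate intersections of the open normal cones with the half-spaces (and with $u^\perp$) into front/back visibility from $\infty u$, finishing the ``moreover'' by locating the meet $\projcl(B)\wedge\projcl(A)$ at $\{\infty u,(\infty u)^-\}$ in the normalized position and transporting by a projectivity. Your tangent-cone/separation derivation of the visibility dictionary and the explicit four-case matching with $\side(\tent(\chi))$ merely spell out what the paper states as an observation.
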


\begin{proof}
Observe that for a polytope $B \subset \mb{R}^d$, a face $f$ of $B$ is front visible from a point $p = \infty u$ with $u \in \mb{S}^{d-1}$ 
if and only if there is a normal vector $x \in \ncone(B,f)^\circ$ such that $\langle u,x \rangle >0$. 
The normal fan of a segment $A$ consists of a hyperplane $h = \ncone(A,\top)$ and the two half spaces it bounds.  Let $u \in \mb{S}^{d-1}$ be a normal vector of this hyperplane $h$.
Now $\nfan(A)$ is given by the sign of $x \mapsto \langle u, x \rangle$, and we may therefore index the vertices of $A$ by $\{+,-\}$ so that $\ncone(A,\pm) = \{x : \langle u, x \rangle = \pm\}$.  
Together this implies that the common refinement of $\nfan(B)$ and $\nfan(A)$ is determined by the visibility of faces of $B$ from $p = \infty u$.
Specifically, ${\ncone(B,f)^\circ \cap \ncone(A,+)^\circ} \neq \emptyset$ if and only if the face $f$ of $B$ is front visible from $p$, and likewise for $\ncone(A,-)$ and back visibility from $p$.
Thus by Lemma~\ref{lem:prism}, the completion condition for $B$ is $\vis(p,B,f) = \chi(f)$. 
Note that we may apply a small projective transformation to $(p,B)$ so that $\{p\} \cup B \subset \mb{R}^d$ without changing visibility. 
Furthermore, any realization of $\tent(\chi)$ is projectively equivalent to $B' \cv A' = [B;0] \cv [A;1]$, and assuming $B'$ is a facet, 
\[p' = \infty [u;0] = \infty(\face(A',+)-\face(A',-)) \in i_\mb{P}^{-1}(\projcl ([\mb{R}^d;0]) \wedge \projcl (A')) = i_\mb{P}^{-1}(\projcl (B')  \wedge \projcl (A')) \]
and $\vis(p',B',f) = \vis(p,B,f) = \chi(f)$. 
\end{proof}

\subsection{Transmitters} 

The purpose of a transmitter polytope is to impose a relationship between the the completion conditions of two of its faces.  The most restrictive example of this is the full transmitter $\mc{T}_\mc{B}$, which forces two faces to be projectively equivalent to each other.   
A slightly more general transmitter that will also be widely used is the forgetful transmitter 
$\mc{T}_{\mc{B}_0,\mc{B}_1}$.  The forgetful transmitter is similar to the full transmitter except one base $\mc{B}_0$ is a copy of the other base $\mc{B}_1$ where a set $\mc{V}$ of simple vertices are whittled.  
We can think of whittling a vertex as introducing linear constraints to a polytope that truncate that vertex.  The forgetful transmitter would force one base to be a projective copy of the other, but it ``forgets'' these extra linear constraints. 
The constructions of the full and forgetful transmitters given in this article are modified versions of those constructed in the proof of \cite[Theorem 5.1.1 and Theorem 5.3.1]{richter1996realization}, which
are very close to Lemma~\ref{lem:ftrans} below.\footnote{%
Note that `full transmitters' were simply called `transmitters' in \cite{richter1996realization}, but here `transmitter' will refer to a more general construction.}


\begin{figure}[h]

\begin{center}

\begin{tikzpicture}[yscale=-2/3, xscale=5/6]

 \filldraw[fill=orange!30!yellow!20] 
 (-75:1.8 and .6) coordinate (a1)
 -- (-130:2 and .6) coordinate (a2)
 -- (-170:2 and .6) coordinate (a3)
 -- (-200:2 and .6) coordinate (a4)
 -- (-310:2 and .6) coordinate (a5)
 -- cycle;
 \filldraw[fill=blue!20]   
  (0,4) +(-75:3.6 and 1.2) coordinate (b1)
 -- +(-130:4 and 1.2)  coordinate (b2)
 -- +(-170:4 and 1.2)  coordinate (b3)
 -- +(-200:4 and 1.2)  coordinate (b4)
 -- +(-310:4 and 1.2)  coordinate (b5)
 -- cycle;
 \foreach \i in {1,...,5}
 {
  \draw (a\i) -- (b\i);
  \draw[densely dotted] (a\i) -- (0,-4);
  \draw[black!75] (a\i) -- (0,1);
  \draw[black!75] (b\i) -- (0,2.5);
 }
 \draw[black!75] (0,1) -- (0,2.5);
 \draw[black!75, densely dotted] (0,-4) -- (0,1);

\end{tikzpicture}
\hspace{24pt}
\begin{tikzpicture}[yscale=-2/3,xscale=5/6]

 \path
  (0,4) +(-75:3.6 and 1.2) coordinate (b1)
  +(-130:4 and 1.2)  coordinate (b2)
  +(-170:4 and 1.2)  coordinate (b3)
  +(-200:4 and 1.2)  coordinate (b4)
  +(-310:4 and 1.2)  coordinate (b5)
 ($(b5)!.25!(b4)$) coordinate (c1)
 ($(b5)!.5!(b1)$) coordinate (c2)
  (-75:1.8 and .6) coordinate (a1)
  (-130:2 and .6) coordinate (a2)
  (-170:2 and .6) coordinate (a3)
  (-200:2 and .6) coordinate (a4)
  (-310:2 and .6) coordinate (a5)
;
 
 \filldraw[fill=orange!30!yellow!20] (a1) -- (a2) -- (a3) -- (a4) -- (a5) -- cycle;
 \filldraw[fill=blue!20] (b1) -- (b2) -- (b3) -- (b4) -- (c1) -- (c2) -- cycle;
 \draw (c1) -- (c2) -- (a5) -- cycle; 
 \foreach \i in {1,...,4}
 {
  \draw (a\i) -- (b\i);
  \draw[densely dotted] (a\i) -- (0,-4);
  \draw[black!75] (a\i) -- (0,1);
  \draw[black!75] (b\i) -- (0,2.5);
 }
 \draw[densely dotted] (b5) -- (0,-4) 
 (c1) -- (b5) -- (c2) ;
 \draw[black!75] (0,1) -- (0,2.5) -- (a5) -- cycle
 (c1) -- (0,2.5)
 (c2) -- (0,2.5);
 \draw[black!75, densely dotted] (0,-4) -- (0,1);

\end{tikzpicture}

\caption{
Schlegel diagrams of a full transmitter (\textsc{Left}) and forgetful transmitter (\textsc{Right}).}\label{fig:trans}
\end{center}
\end{figure}
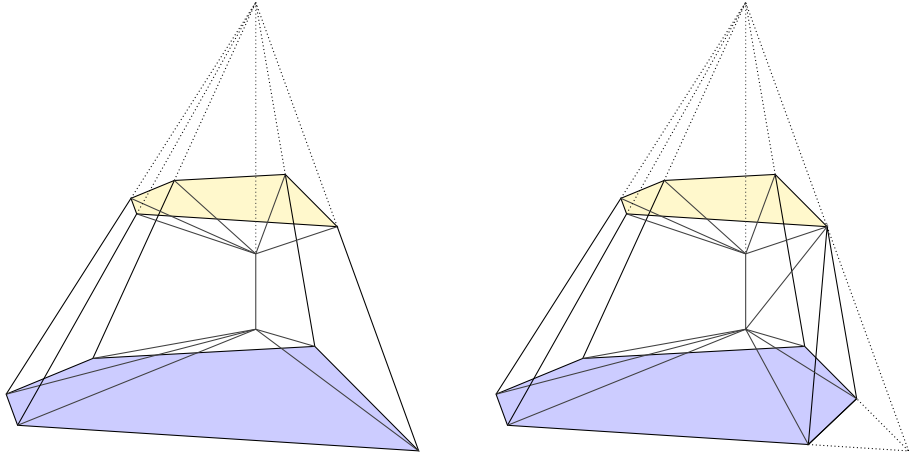

Before defining transmitters, we first state the main lemma for full and forgetful transmitters.  We then give the construction and the main lemma for general transmitters, Lemma~\ref{lem:projtrans}.  The proof of Lemma~\ref{lem:ftrans} will then follow as a special case.

\begin{lem}\label{lem:ftrans}   
Given combinatorial polytopes $\mc{B}_0, \mc{B}_1$ where $\mc{B}_0$ is a copy of $\mc{B}_1$ whittled at vertices $\mc{V}$,
the completion condition for realizations $B_i$ of $\mc{B}_i$ to a forgetful transmitter $\mc{T}_{\mc{B}_0, \mc{B}_1}$ from its bases is that there be 
projective copies $\tilde B_i$ of $B_i$ 
such that for each face $f \in \mc{B}_1 \setminus \mc{V}$ 
\[ \face(\tilde B_0,f) \subset \face(\tilde B_1,f). \] 
In particular for the case $\mc{V} = \emptyset$, the completion condition to a full transmitter from its bases is that the polytopes $B_i$ be projectively equivalent, ${B_0\projeq B_1}$.
\end{lem}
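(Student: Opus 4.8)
The plan is to obtain Lemma~\ref{lem:ftrans} as a special case of a more flexible construction, the \emph{general transmitter}, governed by a single completion-condition lemma (Lemma~\ref{lem:projtrans}). First I would define the general transmitter as a combinatorial polytope assembled, via Lemma~\ref{lem:glue}, by gluing a controlled family of tents (Lawrence extensions) along necessarily flat facets, so that two of its facets carry the posets $\mc{B}_0$ and $\mc{B}_1$ while the intervening tents encode a prescribed list of face-containment requirements between a projective copy of $\mc{B}_0$ and one of $\mc{B}_1$. This follows Richter-Gebert's construction for \cite[Theorems 5.1.1 and 5.3.1]{richter1996realization}, modified so that the source base $\mc{B}_0$ may be the target base $\mc{B}_1$ with a set $\mc{V}$ of simple vertices whittled. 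The governing principle is Lemma~\ref{lem:tent}: attaching a tent with side-pattern $\chi$ over a base $B$ forces a projective copy $\tilde B$ and a point $p$ with $\vis(p,\tilde B,f)=\chi(f)$ for all $f$, so chaining tents transports the incidence structure of one base onto the other.

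Next I would state and prove Lemma~\ref{lem:projtrans}: the completion condition for the two bases of a general transmitter, from those bases, is the existence of projective copies $\tilde B_0,\tilde B_1$ of the bases satisfying the face-containment relation encoded by its tents. The proof goes by induction on the number of tents. Peeling off the outermost tent with Lemma~\ref{lem:unglue} splits any realization of the transmitter into a realization of the smaller transmitter and a realization of the tent meeting along their common (necessarily flat) facet; the inductive hypothesis handles the first piece and Lemma~\ref{lem:tent} the second, and since the point $\projcl(B)\wedge\projcl(A)$ produced by a tent transforms naturally under projectivities, the new visibility constraint composes cleanly with the inherited ones to give the asserted relation. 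The converse --- that prescribed copies can be assembled into a realization --- runs the same induction in reverse, using Lemma~\ref{lem:glue} to re-glue the pieces.

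Finally I would specialize. The forgetful transmitter $\mc{T}_{\mc{B}_0,\mc{B}_1}$ is, by construction, the general transmitter whose encoded relation is ``$\face(\tilde B_0,f)\subseteq\face(\tilde B_1,f)$ for every $f\in\mc{B}_1\setminus\mc{V}$,'' so Lemma~\ref{lem:projtrans} yields precisely the stated completion condition. For the full transmitter put $\mc{V}=\emptyset$; then the containment holds for all faces, in particular all facets, and, $\tilde B_0$ and $\tilde B_1$ being combinatorially equivalent, every vertex of $\tilde B_0$ lies in a facet of $\tilde B_1$, so $\tilde B_0\subseteq\tilde B_1$ and the facet-supporting hyperplanes of the two polytopes coincide; as each is the bounded intersection of the halfspaces these hyperplanes bound, $\tilde B_0=\tilde B_1$ and hence $B_0\projeq B_1$. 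Conversely, a common projective copy of projectively equivalent $B_0,B_1$ satisfies all the containments trivially.

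The main obstacle is the construction itself rather than the reasoning around it: arranging the tents so that the glued complex is a genuine combinatorial polytope, so that every gluing facet is a pyramid or prism of dimension at least $3$ (so that Lemma~\ref{lem:unglue} applies), and --- most delicately --- so that when the source base is whittled, the truncating hyperplanes are ``forgotten,'' imposing no constraint back onto the target base. This last bookkeeping is exactly what makes the forgetful transmitter subtler than the full one, and it is where transcribing Richter-Gebert's argument takes real work; with the construction in hand, the completion condition drops out of the induction above.
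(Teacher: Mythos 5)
There is a genuine gap: your argument defers exactly the content that the lemma asserts. In the paper a transmitter is not a complex of tents glued along flat facets; it is a \emph{single} tent over a prismoid, $\trmr(\mc{P},\mc{F}_0,\mc{F}_1) := \tent(\chi)$, where the prismoid $\mc{P} \subset \mc{B}_0 \catprod \mc{B}_1$ and the sets $\mc{F}_0,\mc{F}_1$ (encoded in the visibility pattern $\chi$ of the apex) carry all the information, and the two ``bases'' are faces of that one tent, not facets of a glued complex. The completion condition (Lemma~\ref{lem:projtrans}) is then proved directly, with no induction and no use of Lemma~\ref{lem:unglue}: one realizes the prismoid as $[\tilde B_0;0]\cv[\tilde B_1;1]$, identifies visibility of its faces from the point $\infty[\vec{0};1]$ with the existence of half-spaces that strictly support one base at a face while containing the other base in their interior, and invokes Lemmas~\ref{lem:prism} and~\ref{lem:tent}. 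Your plan replaces this with an unspecified ``controlled family of tents'' plus an induction that peels tents off with Lemma~\ref{lem:unglue}; but you never say which tents, over which bases, with which side-patterns $\chi$, nor how a chain of visibility constraints would produce precisely the relation $\face(\tilde B_0,f)\subset\face(\tilde B_1,f)$ for $f\in\mc{B}_1\setminus\mc{V}$. You acknowledge this yourself (``the main obstacle is the construction itself\dots with the construction in hand, the completion condition drops out''), which is to concede the heart of the proof.

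Concretely, what is missing is the derivation that the paper carries out when specializing Lemma~\ref{lem:projtrans} to $\mc{T}_{\mc{B}_0,\mc{B}_1}=\trmr(\mc{P},\emptyset,\mc{V})$: the prismoid sides $(f,f)$ force $\face(\tilde B_0,f)$ to be parallel to $\face(\tilde B_1,f)$ (via the common-refinement condition of Lemma~\ref{lem:prism}); $\mc{F}_0=\emptyset$ forces $\tilde B_0\subset\tilde B_1$; and $\mc{F}_1=\mc{V}$ forces $\tilde B_0$ to miss the whittled vertices of $\tilde B_1$ while meeting every face indexed by $\mc{B}_1\setminus\mc{V}$ --- and one must check that these three facts together are equivalent to the stated face-containments, including that the remaining normal-fan conditions come for free. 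Without fixing the combinatorics of the transmitter (in particular the extra prismoid sides $(\mc{W}_i\setminus[w_i,\top])\times\{v_i\}$ that make the whittling ``forgetful''), the induction you sketch cannot even be checked for correctness, and the parallelism step --- which is what upgrades ``$\tilde B_0\subset\tilde B_1$ touching certain faces'' to ``$\face(\tilde B_0,f)\subset\face(\tilde B_1,f)$'' --- has no visible source in your argument. Your closing derivation of $\tilde B_0=\tilde B_1$ from the containments in the full case is fine, but it rests on a condition you have not established.
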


A \df{transmitter} is any polytope that is combinatorially equivalent to one of the form 
\[ \trmr(B_0,B_1) :=  [B_0 ; 0; 0] \cv [B_1; 1; 0] \cv [\vec{0}; 0 ; 1]  \cv [\vec{0}; 1 ; 1] \]
for polytopes $B_0, B_1 \subset \mb{R}^d$. 
An \df{abstract transmitter} is any poset $\mc{T} = \trmr(\mc{P},\mc{F}_0,\mc{F}_1)$ of the following form. 
For an abstract prismoid $\mc{P} \subset \mc{B}_0 \catprod \mc{B}_1$, and a subset $\mc{F}_i \subset \mc{B}_i$ of each base such that $\top \nin \mc{F}_i $ and $\bot \in \mc{F}_i$, let 
\[ \trmr(\mc{P},\mc{F}_0,\mc{F}_1) := \tent(\chi) \quad \text{where} \]
\[  \chi(f_0,f_1) = \left\{\begin{array}{cl} 
- & f_0 \in \mc{F}_0,\ f_1 \nin \mc{F}_1 \\
+ & f_0 \nin \mc{F}_0,\ f_1 \in \mc{F}_1 \\ 
0 & f_0 \nin \mc{F}_0,\ f_1 \nin \mc{F}_1 \\ 
* & f_0 \in \mc{F}_0,\ f_1 \in \mc{F}_1. \\ 
\end{array}\right.  \]
We call $\mc{P}$ the prismoid of the transmitter, and we say that the bases of the prismoid, $\mc{B}_0,\mc{B}_1$, are the bases of the transmitter. 
We refer to sides of $\mc{T}$ as transmitter sides and the sides of $\mc{P}$ as prismoid sides. 
Note that we may equivalently define a transmitter as a realization of an abstract transmitter.

The pair of sets $\mc{F}_i$ tell us about the visibility of faces of the transmitter $\mc{T} = \trmr(\mc{P},\mc{F}_0,\mc{F}_1)$. 
Specifically by Lemma~\ref{lem:tent}, for any realization of $\mc{T}$ there is 
a point $p$ such that one base $B_0$ is front only visible from $p$ and the other base $B_1$ is back only visible from $p$, and likewise for the faces of the bases.  If a face $f_0$ is in $\mc{F}_0$, then $f_0$ is doubly visible from $p$, else $f_0$ is front only visible.  Similarly, if a face $f_1$ is in $\mc{F}_1$, then $f_1$ is doubly visible, else $f_1$ is back only visible.

The main lemma for transmitters, Lemma~\ref{lem:projtrans}, gives the completion condition from the bases in terms of strictly supporting half-spaces.  A half-space $H$ \df{strictly supports} a polytope $B$ at a specified face $f$ when the polytope is contained in the half-space and intersects the boundary of the half-space in the specified face, $B \subset H$ and $B \cap \partial H = \face(B,f)$.

\vbox{
\begin{lem}\label{lem:projtrans}
Given an abstract transmitter $\mc{T} = \trmr(\mc{P},\mc{F}_0,\mc{F}_1)$ with bases $\mc{B}_0, \mc{B}_1$, 
the completion condition for realizations $B_i$ of $\mc{B}_i$ to $\mc{T}$ from its bases are that there be projective copies $\tilde B_i$ of $B_i$ such that the following holds:
\begin{enumerate}[label=T\arabic*]
\item\label{condTP} $\tilde B_0, \tilde B_1$ satisfy the condition of Lemma~\ref{lem:prism}: the common refinement of the normal fans of the $\tilde B_i$ is dual to the sides of $\mc{P}$,
\[ \labl(\nfan(\tilde B_0) \wedge \nfan(\tilde B_1))^* = \side(\mc{P})  .\]
\item\label{condTB0} For each face $f \in \mc{B}_0$, there is a half-space $H_f$ that contains $\tilde B_1$ in its interior and that strictly supports $\tilde B_0$ at $f$ if and only if $f \in \mc{F}_0$. 
\item\label{condTB1} For each face $f \in \mc{B}_1$, there is a half-space $H_f$ that contains $\tilde B_0$ in its interior and that strictly supports $\tilde B_1$ at $f$ if and only if $f \in \mc{F}_1$. 
\end{enumerate}
\end{lem}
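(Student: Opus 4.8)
The plan is to bootstrap from Lemma~\ref{lem:tent} and Lemma~\ref{lem:prism}. Since $\mc{T}=\tent(\chi)$ is a tent whose base is the prismoid $\mc{P}$, Lemma~\ref{lem:tent} says a realization of $\mc{P}$ completes to $\mc{T}$ from its base exactly when some projective copy of it admits a point $p$ with $\vis(p,\cdot,g)=\chi(g)$ for every $g\in\mc{P}$; and Lemma~\ref{lem:prism} says such realizations are, up to projectivity, the convex joins $[\tilde B_0;0]\cv[\tilde B_1;1]$ with $\tilde B_i$ realizing $\mc{B}_i$ and condition~\ref{condTP} holding, the base faces $(\top,\bot)$ and $(\bot,\top)$ of the join being $[\tilde B_0;0]$ and $[\tilde B_1;1]$. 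Chaining these, the completion condition for $B_0,B_1$ to $\mc{T}$ becomes: for suitable projective copies $\tilde B_i$ of $B_i$ satisfying~\ref{condTP} there is a point $p$ with $\vis(p,\,[\tilde B_0;0]\cv[\tilde B_1;1],\,g)=\chi(g)$ for all $g\in\mc{P}$. So the work reduces to showing that, given~\ref{condTP}, the existence of such a $p$ is equivalent to~\ref{condTB0} and~\ref{condTB1}. The direction ``$\Rightarrow$'' is the easy one: reading off the visibility of the base faces of $\tilde P:=[\tilde B_0;0]\cv[\tilde B_1;1]$ directly produces the required half-spaces, and the side faces carry whatever visibility $p$ gives them for free. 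The content is in ``$\Leftarrow$''.

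First I would put things in normal form. Realizing $\mc{T}$ as the transmitter $[\tilde B_0;0;0]\cv[\tilde B_1;1;0]\cv[\vec 0;0;1]\cv[\vec 0;1;1]$, the ``moreover'' clause of Lemma~\ref{lem:tent} forces $p$ to be the common point of the affine spans of the base and of the apex segment, which is the point at infinity in the direction from $[\tilde B_1;1]$ to $[\tilde B_0;0]$; call this point $p_0$, its sign being pinned down by $\chi(\top,\bot)=+$. Writing $M_i$ for the support function of $\tilde B_i$, a routine computation gives
\[ \ncone(\tilde P,(f,\bot))^\circ = \{(w,c): w\in\ncone(\tilde B_0,f)^\circ,\ c<M_0(w)-M_1(w)\}, \]
the analogous description of $\ncone(\tilde P,(\bot,f))^\circ$ with $\tilde B_1$ and ``$>$'' in place of ``$<$'', and for a proper side face
\[ \ncone(\tilde P,(f_0,f_1))^\circ = \{(w,\,M_0(w)-M_1(w)): w\in C_{f_0,f_1}\},\qquad C_{f_0,f_1}:=\ncone(\tilde B_0,f_0)^\circ\cap\ncone(\tilde B_1,f_1)^\circ, \]
the last cone being nonempty precisely for $(f_0,f_1)\in\side(\mc{P})$ by~\ref{condTP}. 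As in the proof of Lemma~\ref{lem:tent}, a face of $\tilde P$ is front (resp.\ back) visible from $p_0$ iff the relative interior of its normal cone contains a vector with negative (resp.\ positive) last coordinate, so every visibility question becomes a sign question for $M_0-M_1$. A base face $(f,\bot)$ is then always front visible, and back visible iff some $w\in\ncone(\tilde B_0,f)^\circ$ has $M_1(w)<M_0(w)$; but this last statement says exactly that the half-space $\{x:\langle w,x\rangle\le M_0(w)\}$ strictly supports $\tilde B_0$ at $f$ and contains $\tilde B_1$ in its interior, i.e.\ it is a half-space $H_f$ as in~\ref{condTB0}. Thus ``$(f,\bot)$ doubly visible $\Leftrightarrow f\in\mc{F}_0$'' is precisely condition~\ref{condTB0} (the case $f=\top$ is automatic, since $\top\notin\mc{F}_0$ and no half-space strictly supports a full-dimensional $\tilde B_0$ at $\top$), and symmetrically the faces $(\bot,f)$ give~\ref{condTB1}.

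The crux is then showing that~\ref{condTP},~\ref{condTB0} and~\ref{condTB1} already force each proper side face $(f_0,f_1)$ to carry the correct visibility $\chi(f_0,f_1)$. The key point is that although $M_0-M_1$ is only piecewise linear in general, it is \emph{linear} on $C_{f_0,f_1}$ (on $\ncone(\tilde B_i,f_i)^\circ$ the function $M_i$ agrees with a fixed linear functional), while being concave on $\ncone(\tilde B_0,f_0)^\circ$ and convex on $\ncone(\tilde B_1,f_1)^\circ$. Failure of~\ref{condTB0} at $f_0$ forces $M_0-M_1\le 0$ on $\ncone(\tilde B_0,f_0)^\circ\supseteq C_{f_0,f_1}$, failure of~\ref{condTB1} at $f_1$ forces $M_0-M_1\ge 0$ on $C_{f_0,f_1}$, holding of~\ref{condTB0} at $f_0$ rules out $M_0-M_1\equiv 0$ on $C_{f_0,f_1}$ (slide a point of $C_{f_0,f_1}$ toward the witness of~\ref{condTB0} and use concavity), and symmetrically for~\ref{condTB1}. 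Reading off the four cases in the definition of $\chi$ for a transmitter, this yields: doubly obscured when $f_0\notin\mc{F}_0$, $f_1\notin\mc{F}_1$; back-only visible when $f_0\in\mc{F}_0$, $f_1\notin\mc{F}_1$; front-only visible when $f_0\notin\mc{F}_0$, $f_1\in\mc{F}_1$; and doubly visible when $f_0\in\mc{F}_0$, $f_1\in\mc{F}_1$. I expect the doubly visible case to be the main obstacle: one must show the linear function $M_0-M_1$ genuinely attains both signs on $C_{f_0,f_1}$ when \emph{both}~\ref{condTB0} holds at $f_0$ and~\ref{condTB1} holds at $f_1$, which will require combining the concavity on one normal cone with the convexity on the other rather than using either condition in isolation. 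Once all faces of $\mc{P}$ are checked, Lemma~\ref{lem:tent} delivers the completion condition, and Lemma~\ref{lem:ftrans} follows as the special case in which $\mc{P}$ is the prismoid joining $\mc{B}_0$ to $\mc{B}_1$ and the $\mc{F}_i$ are chosen so that~\ref{condTB0} and~\ref{condTB1} reduce to the stated face containments (with $\mc{F}_0=\mc{F}_1=\emptyset$ outside $\{\bot\}$ recovering $B_0\projeq B_1$ for the full transmitter).
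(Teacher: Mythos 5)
Your overall route is the same as the paper's: reduce via Lemma~\ref{lem:prism} and Lemma~\ref{lem:tent} to finding a point with prescribed visibility for $\tilde P=[\tilde B_0;0]\cv[\tilde B_1;1]$, and identify visibility of the base faces $(f,\bot)$, $(\bot,f)$ with the half-space conditions \ref{condTB0} and \ref{condTB1}. Your support-function computation of the normal cones of $\tilde P$ is a perfectly good substitute for the paper's direct argument with the vertical set $X=\bigcup_{t\geq 0}[F;t]$ and a hyperplane of the form $[h;\mathbb{R}]$, and your necessity direction matches the paper's. The genuine gap is exactly at the step you yourself call the crux, and your sketch does not close it.

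The argument ``slide a point of $C_{f_0,f_1}$ toward the witness of \ref{condTB0} and use concavity'' fails because the segment from $w\in C_{f_0,f_1}$ to the witness $w_0\in\ncone(\tilde B_0,f_0)^\circ$ leaves $C_{f_0,f_1}$ immediately: this cell of the common refinement is in general a proper (often lower-dimensional) subcone of $\ncone(\tilde B_0,f_0)^\circ$. Concavity of $M_0-M_1$ on $\ncone(\tilde B_0,f_0)$ then only gives positivity at points \emph{outside} $C_{f_0,f_1}$, and does not rule out $M_0-M_1\le 0$, or even $\equiv 0$, on $C_{f_0,f_1}$ itself. The structural obstruction is this: $\{w\in\ncone(\tilde B_0,f_0)^\circ: M_0(w)>M_1(w)\}$ is a convex subcone, while $\ncone(\tilde B_0,f_0)^\circ$ is partitioned into the cells $C_{f_0,h_1}$ over all $h_1$ forming a side with $f_0$; condition \ref{condTB0} at $f_0$ only says this convex subcone meets \emph{some} cell, whereas the visibility demanded by $\chi$ requires the correct sign on \emph{every} cell $C_{f_0,h_1}$. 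Nothing in \ref{condTB0} and \ref{condTB1} applied to $f_0$ and $f_1$ in isolation forces this: one can position two polygons so that a vertex $v_0$ of $\tilde B_0$ protrudes beyond $\tilde B_1$ only in directions lying in one cell of the subdivision of $\ncone(\tilde B_0,v_0)^\circ$, so that an adjacent cell $C_{v_0,f_1}$ carries $M_0-M_1<0$ throughout even though $v_0$ satisfies \ref{condTB0}. So your single-sided cases are not established by the slide-and-concavity argument, and the doubly visible case, which you explicitly leave open, is likewise unresolved; any repair must use more than concavity on one normal cone and convexity on the other. Be aware also that you cannot simply import the missing step from the paper: the paper proves the half-space characterization only for the base faces and then passes to the full statement $\vis(q,P,(f_0,f_1))=\chi(f_0,f_1)$ essentially by assertion (``by symmetry''), so the side-face verification is not spelled out there either. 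As submitted, your proposal establishes necessity and the base-face part of sufficiency, but not the lemma.
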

}\nointerlineskip

\begin{proof}
We start by relating parts \ref{condTB0} and \ref{condTB1} of the condition of the theorem to the visibility of a face from a point. 
Consider a realization $P = [\tilde B_0 ; 0] \cv [\tilde B_1; 1]$ of $\mc{P}$,
and a face $f \in \mc{B}_0$. Let $F = \face(\tilde B_0,f)$, 
let $X = \bigcup_{t\geq 0}[F;t]$ be the set of points above $[F;0] = \face(P,(f,\bot))$ in the last coordinate,  
and let $q = \infty[\vec{0};1]$. 
Observe that ${X} \cup q = [F;0] \cv q$.
We claim the face $(f,\bot)$ of $P$ is visible from $q$ if and only if there is a half-space $H$ that contains $\tilde B_1$ in its interior and that strictly supports $\tilde B_0$ at $f$.

First suppose that the face $(f,\bot)$ of $P$ is not visible from $q$.  Then, $X^\circ$ intersects $P$. 
Suppose also that there is a half-space $H$ that strictly supports $\tilde B_0$ at $f$ and contains $\tilde B_1$ in its interior.  Then, the hyperplane $h' = [\partial H; \mb{R}]$ intersects $P$ in $[F;0]$ and contains $X$, so $X \cap P = [F;0]$.  Since $[F;0]$ is on the boundary of $X$, this contradicts $X^\circ \cap P \neq \emptyset$.  Thus, no such half-space exists.

Now suppose that the face $(f,\bot)$ of $P$ is visible from $q$.  Then, $X^\circ$ is disjoint from $P$. 
We will show that the affine span of $X$ intersects $P$ in $[F;0]$.  Suppose not and let $y \in P \setminus [F;0]$ be a point in the the affine span of $X$ and let $x \in F^\circ$.  Since the restriction of the affine span of $X$ to $[\mb{R}^d;0]$ is the affine span of $[F;0]$, which intersects $P$ in $[F;0] \not\ni y$, the last coordinate of $y$ must be positive, so $y = [z;t]$ for some $z$ in the affine span of $F$ and $t > 0$.  This implies that for $\eps > 0$ sufficiently small $\eps z +(1-\eps)x \in F^\circ$, so $\eps y +(1-\eps)[x;0] \in X^\circ \cap P$.  But this contradicts $X^\circ \cap P = \emptyset$, so the affine span of $X$ does indeed intersect $P$ in $[F;0]$.  The affine span of $X$ can be extended to a hyperplane of the form $h' = [h;\mb{R}]$ such that $h' \cap P = [F;0]$, and so $h$ bounds a half-space of the form $H' = [H;\mb{R}]$ that strictly supports $P$ at $(f,\bot)$.  Therefore, the half-space $H$ contains $\tilde B_1$ in its interior and strictly supports $\tilde B_0$ at $f$.
Hence, the claim holds.

Since we can exchange $\tilde B_0$ and $\tilde B_1$ by symmetry in the above claim, we have the following. 
If $P = [\tilde B_0 ; 0] \cv [\tilde B_1; 1]$ realizes $\mc{P}$ and $q = \infty[\vec{0};1]$,
then $\vis(q,P,(f_0,f_1)) = \chi(f_0,f_1)$ for $\chi$ as in the definition of a transmitter if and only if parts \ref{condTB0} and \ref{condTB1} hold.

Next, we show that the condition of theorem is sufficient. 
Suppose we are given a pair $\tilde B_0, \tilde B_1$ that satisfy the condition. 
By Lemma~\ref{lem:prism} $P = [\tilde B_0 ; 0] \cv [\tilde B_1; 1]$ is a realization of $\mc{P}$. 
By the above claim $\vis(q,P,(f_0,f_1)) = \chi(f_0,f_1)$, so by Lemma~\ref{lem:tent}, $[P;0] \cv [\vec{0}; 0 ; 1]  \cv [\vec{0}; 1 ; 1]$ is a realization of $\mc{T}$. 

Finally, we show that the condition of theorem is necessary. 
Suppose we are given a realization $T$ of $\mc{T}$.  By Lemma~\ref{lem:prism}, there is some projective copy of $T$ of the form
\[ \tilde T = [P;0] \cv [A;1], \quad P = [\tilde B_0;0] \cv [\tilde B_1;1], \quad A = \conv\{a_0, a_1\}, \]
and part \ref{condTP} holds. 
Since the faces $\tilde B_0$ and $\tilde B_1$ are respectively back only and front only visible from a point $q \in \projcl(P) \wedge \projcl(A)$, we may assume that $a_i = [\vec 0;i]$ and $q = \infty[\vec{0};1;0]$. 
Otherwise, if we first replace $a_i$ with $a'_i = {(a_0 \vee a_1) \cap [\mb{R}^d;i;\mb{R}]}$ and then translate $a'_i$ and $\tilde B_i$ by $-a'_i$, we obtain an affine copy of $\tilde T$ as above in which the $a_i$ have become $[\vec 0;i]$.
By Lemma~\ref{lem:tent}, we have $\vis(q,P,(f_0,f_1)) = \chi(f_0,f_1)$, which implies by the above claim that parts \ref{condTB0} and \ref{condTB1} hold.
\end{proof}

An \df{abstract full transmitter} with base $\mc{B}$ is a poset of the form 
\[ \mc{T}_\mc{B}:=\trmr(\pris(\mc{B}),\emptyset, \emptyset) \quad \text{where} \] 
\[ \pris(\mc{B}) := (\mc{B}\times \bot) \cup (\bot\times \mc{B}) \cup \{(f,f) : f \in \mc{B}\} \]  
and $\mc{B}$ is a bounded poset. 
Observe that the prismoid of $\mc{T}$ is a prism $\mc{P}$, and for any realization $T \in \mb{R}^d$ of $\mc{T}_\mc{B}$, all sides of the prismoid of $T$ are doubly obscured from some point $p \in {\mb{S}}{}^d$. 
Intuitively, one base is the shadow cast by the other base from a light source at the point $p$; see Figure~\ref{fig:trans} (left).
An \df{abstract forgetful transmitter} with bases $\mc{B}_i$ is a poset $\mc{T}_{\mc{B}_0, \mc{B}_1}$ of the following form. 
Let $\mc{B}_1$, $\mc{W}_1,\dots$, $\mc{W}_n$ be a bounded poset of rank $d{+}1$, and $\{v_1,\dots,v_n\} = \mc{V} \subset \mc{B}_1$ be a set of simple vertices (atoms covered by $d$ elements each), and $\varphi_i : [v_i,\top] \to \mc{W}_i$ be monotonic functions such that $\varphi_i([v_1,\top]) = [w_i,\top]$ where $w_i = \varphi_i(v_1)$, 
and let $\mc{B}_0 = \mc{B}_1 \#^*_{\varphi_1} \mc{W}_1 \dots  \#^*_{\varphi_n} \mc{W}_n $.
\[ \mc{T}_{\mc{B}_0, \mc{B}_1} := \trmr(\mc{P}, \emptyset, \mc{V}) \quad \text{where} \]
\[ \mc{P} = (\mc{B}_0 \times \bot) \cup (\bot \times \mc{B}_1)
 \cup \{ (f,f): f \in \mc{B}_1 \setminus \mc{V}\} 
\cup \bigcup_{i=1}^n ((\mc{W}_i \setminus [w_i,\top]) \times \{v_i\} ). 
\]
A full or forgetful transmitter is any realization of the respective posets above.

\begin{proof}[Proof of Lemma \ref{lem:ftrans}] 
We start with the completion condition for full transmitters.  Since, $\mc{F}_0 = \mc{F}_1 = \emptyset$ in the condition of Lemma~\ref{lem:projtrans}, and a polytope is defined as the intersection of the half-spaces of its affine span strictly supporting its facets, the completion condition is $\tilde B_0 = \tilde B_1$. 

For forgetful transmitters $\mc{T}_{\mc{B}_0,\mc{B}_1}$, 
since $\mc{F}_0 = \emptyset$, part of the completion condition is $\tilde B_0 \subset \tilde B_1$.  Since  $\mc{F}_1 = \mc{V}$, part of the condition is $\face(\tilde B_1, v_i) \nin \tilde B_0$ and for all $f \in \mc{B}_1 \setminus \mc{V}$, $\face(\tilde B_1, f) \cap \tilde B_0 \neq \emptyset$.  And, since $(f,f)$ is a side of $\mc{T}_{\mc{B}_0,\mc{B}_1}$, part of the condition is $\face(\tilde B_1, f)$ is parallel to $\face(\tilde B_0, f)$.  All together, these are equivalent to the condition $\face(\tilde B_0, f) \subset \face(\tilde B_1, f)$ for all $f \in \mc{B}_1 \setminus \mc{V}$.
Note that the rest of the completion condition of Lemma~\ref{lem:projtrans} is a consequence of the above condition.  Namely, 
$\ncone(\tilde B_0,f_0) \subset \ncone(\tilde B_1, v_i)$ for $f_0 \in \mc{W}_i \setminus [w_i,\top]$.
\end{proof}

\subsection{Combining Completion Conditions} 

Recall that a full transmitter has two facets that are pyramids, and these facets are always projectively equivalent.  Suppose we are given a polytope with a specified facet that is a pyramid, and we are given a completion condition for this facet.  Then, if we glue a transmitter along this specified facet, we get a new polytope with the same completion condition from the transmitter's one remaining pyramidal facet.  So far this does not give us anything new, but using connectors instead of transmitters will give us more.  
Connectors serve the same purpose as full transmitters, but have more than just two pyramidal facets that are projectively equivalent.  
Suppose now we are given several polytopes and completion conditions $\psi_1,\dots,\psi_n$ from a specified pyramidal facet of each polytope. 
If we glue the specified facets of these polytopes along the pyramidal facets of a connector then the completion condition for any the connector's remaining pyramidal facets will be the conjunction of the given completion conditions $\psi_1 \wedge \dots \wedge \psi_n$. 

We now define the \df{connector} $\conn(n,\mc{B})$ with $n$ facets of type $\pyr(\mc{B}) := \mc{B} \catprod \{\bot,\top\}$.
For $n=2$, this is just the full transmitter, and 
for $n=4$, this consists of two copies of the full transmitter glued together along the prism of each 
\[ \conn(2,\mc{B}):=\trmr(\mc{P},\emptyset,\emptyset) \quad \text{where} \quad
\mc{P} = \pris(\mc{B}) := \{(f,\bot),(\bot,f),(f,f): f \in \mc{B}\} \subset \mc{B} \catprod \mc{B}, \]  
\[ \conn(4,\mc{B}):=\conn(2,\mc{B}) \#_{\varphi}  \conn(2,\mc{B}) \]
where $\varphi : (\mc{P} \times \bot) \to (\mc{P} \times \bot) $ is the identify function; 
see Figure~\ref{fig:connector}.  
For $n>4$ even, this consists of $\frac{n}{2}-1$ copies of $\conn(4,\mc{B})$ glued together along their pyramids, and for $n$ odd we just disregard a pyramid of $\conn(n+1,\mc{B})$. 
The choices of pyramids to glue along or disregard is irrelevant.
Generally, the value $n$ will be the number of other polytopes that are glued to a connector, so we denote $\conn(n,\mc{B})$ simply by $\conn(\mc{B})$ in this case. 
This generalizes the connector polytope in \cite[Section 5.2]{richter1996realization}.


\begin{figure}[th]
\centering

\begin{tikzpicture}

\begin{scope}[scale=1]
 \path (-.4,0) coordinate (a)
 +(.8,-1) coordinate (b)
 +(.8,1)  coordinate (c)
 +(0,2) coordinate (d) 
 +(-.6,.2) coordinate (e) 
 +(-.6,.8) coordinate (f) 
 +(1.4,.2) coordinate (g)
 +(1.4,.8) coordinate (h);
\end{scope}

\matrix[row sep=6mm]
{

\begin{scope}[scale=2/5]
 \path
 +(190:6 and 2.4) coordinate (a1)
 -- +(250:6 and 2.4) coordinate (a2)
 -- +(335:6 and 2.4) coordinate (a3)
 -- +(430:6 and 2.4) coordinate (a4)
 -- +(475:6 and 2.4) coordinate (a5)
 -- cycle;
 \path
 (0,4.2) +(190:1.5 and .6) coordinate (b1)
 -- +(250:1.5 and .6)  coordinate (b2)
 -- +(335:1.5 and .6)  coordinate (b3)
 -- +(430:1.5 and .6)  coordinate (b4)
 -- +(475:1.5 and .6)  coordinate (b5)
 -- cycle;
 \path (0,2.7) coordinate (c)
 (0,3.1) coordinate (d)
 (0,7) coordinate (e)
 (0,9) coordinate (f)
 (0,5.6) coordinate (g);
\end{scope}

 \filldraw[draw=orange!40!black,fill=orange!40!yellow!30] 
  +(a1) -- +(a2) -- +(a3) -- +(a4) -- +(a5) -- cycle;
 \filldraw[draw=blue!30!black,fill=blue!20] 
  +(b1) -- +(b2) -- +(b3) -- +(b4) -- +(b5) -- cycle;
 \foreach \i in {1,...,5}
 {
  \draw (a\i) -- (b\i);
  \draw[black!60!violet] (b\i) -- (e);
  \draw[black!60!red] (a\i) -- (f);
 }
 \draw[black!75] 
 (e) -- (f);
\\ 

 \filldraw[draw=orange!40!black,fill=orange!40!yellow!30] 
  +(a1) -- +(a2) -- +(a3) -- +(a4) -- +(a5) -- cycle;
 \filldraw[draw=blue!30!black,fill=blue!20] 
  +(b1) -- +(b2) -- +(b3) -- +(b4) -- +(b5) -- cycle;
 \foreach \i in {1,...,5}
 {
  \draw  +(a\i) -- +(b\i);
  \draw[olive]  +(a\i) -- +(c);
  \draw[black!70!green]  +(b\i) -- +(d);
 }
 \draw[black!75]  +(c) -- +(d);
\\
};
\end{tikzpicture}
\begin{tikzpicture}
\node[scale=1.6] at (0,0) {
\begin{tikzpicture}

 \filldraw[draw=orange!40!black,fill=orange!40!yellow!30] 
  +(a1) -- +(a2) -- +(a3) -- +(a4) -- +(a5) -- cycle;
 \filldraw[draw=blue!30!black,fill=blue!20]  
  +(b1) -- +(b2) -- +(b3) -- +(b4) -- +(b5) -- cycle; 
 \foreach \i in {1,...,5}
 {
  \draw (a\i) -- (b\i);
  \draw[olive] (a\i) -- (c);
  \draw[black!70!green] (b\i) -- (d);
  \draw[black!60!violet] (b\i) -- (e);
  \draw[black!60!red] (a\i) -- (f);
 }
 \draw[black!75] (c) -- (d)
 (e) -- (f);

\end{tikzpicture}
};
\end{tikzpicture}

\caption{
\textsc{Left:} Two Schlegel diagrams of full a transmitter. 
\textsc{Right:} A Schlegel diagram of the connector with 4 pyramids that results from gluing these transmitters. (Compare with Figure~\ref{fig:glue} and Figure~\ref{fig:trans} Left)
}\label{fig:connector}

\end{figure}
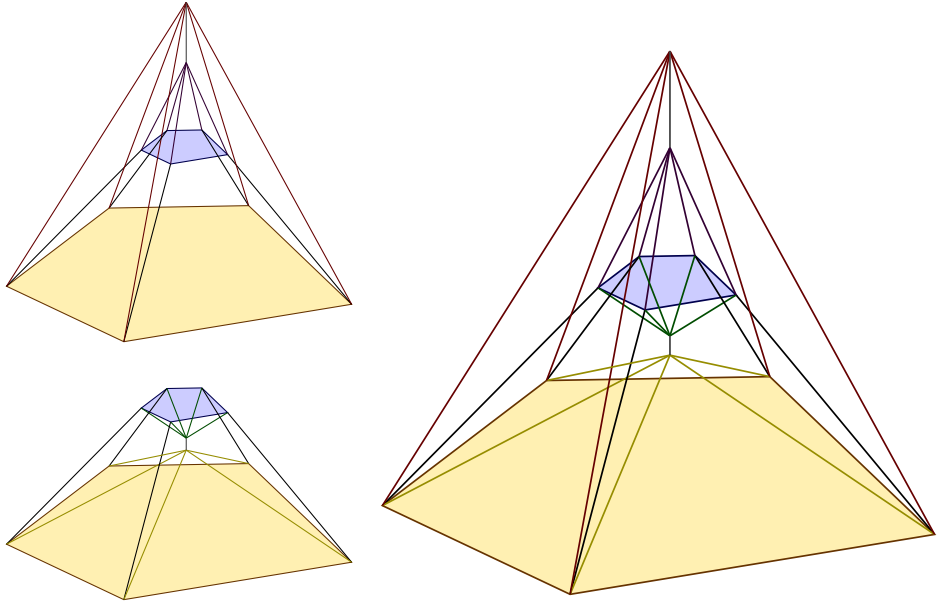

\begin{lem}\label{lem:cons} 
Given a combinatorial polytope $\mc{B}$, 
the completion condition for realizations $P_1,\dots,P_n$ of $\pyr(\mc B)$ to a connector polytope $\conn(n,\mc{B})$ from its $n$ pyramidal facets are that all realizations be projectively equivalent $P_1 \projeq \dots \projeq P_n$.
\end{lem}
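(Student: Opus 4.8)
The plan is to induct on $n$, unwinding the recursive definition of $\conn(n,\mc{B})$: the base case $n=2$ is handled by Lemma~\ref{lem:ftrans}, and the inductive step passes between a connector and the transmitter pieces out of which it is glued, using Lemmas~\ref{lem:glue} and~\ref{lem:unglue}. First I would record a purely projective remark: for any two realizations $B,B'$ of $\mc{B}$, one has $\pyr(B)\projeq\pyr(B')$ if and only if $B\projeq B'$. Indeed, the apex of $\pyr(\mc{B})$ is the unique vertex lying in every facet other than the base, hence is combinatorially distinguished, so a projectivity carrying $\pyr(B)$ to $\pyr(B')$ carries apex to apex and restricts to a projectivity $B\to B'$; conversely, a projectivity $B\to B'$ together with any choice of image for the apex prescribes a projectivity on a projective frame of the ambient space and so extends to one carrying $\pyr(B)$ to $\pyr(B')$. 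This lets me translate freely between a completion condition phrased for the prism bases of a transmitter and one phrased for its pyramidal facets. I also use that every realization of $\pyr(\mc{B})$ is literally a geometric pyramid over a realization of $\mc{B}$.

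Base case $n=2$. Here $\conn(2,\mc{B})=\mc{T}_\mc{B}=\trmr(\pris(\mc{B}),\emptyset,\emptyset)$ is the full transmitter. Among its facets, exactly two are of type $\pyr(\mc{B})$: the tent sides sitting over the two bases of the prism $\pris(\mc{B})$ (the remaining sides, those over the facets of $\mc{B}$, are prism-like, not pyramidal). If realizations $P_0,P_1$ of $\pyr(\mc{B})$ complete to a realization $T$ of $\mc{T}_\mc{B}$, then $P_0,P_1$ are projective copies of these two pyramidal facets of $T$, which are pyramids over the two bases of the prism facet of $T$; by Lemma~\ref{lem:ftrans} with $\mc{V}=\emptyset$ those two bases are projectively equivalent, so by the remark above $P_0\projeq P_1$. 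Conversely, if $P_0\projeq P_1$, write $P_0=\pyr(B_0)$ and $P_1=\pyr(B_1)$ with $B_i$ realizing $\mc{B}$; then $B_0\projeq B_1$, so Lemma~\ref{lem:ftrans} yields a realization $T$ of $\mc{T}_\mc{B}$ whose prism bases are projective copies of $B_0,B_1$, and then the two pyramidal facets of $T$ are projective copies of $P_0,P_1$, completing them.

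Inductive step. Take $n>2$ even (the odd case reduces at once to $\conn(n+1,\mc{B})$ by disregarding one of its $n+1$ pyramidal facets). A realization $T$ of $\conn(n,\mc{B})$ decomposes, by repeated application of Lemma~\ref{lem:unglue} along the necessarily flat prism and pyramid facets along which $\conn(n,\mc{B})$ is defined (prisms and pyramids of dimension at least $3$ are necessarily flat), into a chain of realizations of $\conn(4,\mc{B})$, each of which in turn decomposes into two full transmitters sharing a single prism facet. The two bases of that shared prism are projectively equivalent by the $n=2$ case, so within each $\conn(4,\mc{B})$-block all four pyramidal facets are pyramids over mutually projectively equivalent polytopes, hence mutually projectively equivalent by the opening remark; since consecutive blocks share a pyramidal facet, transitivity of $\projeq$ propagates this through the whole chain and yields $P_1\projeq\cdots\projeq P_n$. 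Conversely, given $P_1\projeq\cdots\projeq P_n$, replace all of them by a common projective copy $\pyr(B)$. By the $n=2$ case there is a full transmitter with both pyramidal facets projective copies of $\pyr(B)$; two such transmitters glue along $\pris(B)$ by Lemma~\ref{lem:glue} to a realization of $\conn(4,\mc{B})$ all of whose pyramidal facets are projective copies of $\pyr(B)$; and $\tfrac{n}{2}-1$ such blocks glue in a chain along pyramidal facets, again by Lemma~\ref{lem:glue}, to a realization of $\conn(n,\mc{B})$ whose $n$ pyramidal facets are projective copies of $\pyr(B)$, hence of each $P_i$. This proves both directions.

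The step I expect to be the main obstacle is the bookkeeping in the inductive step: making the Lemma~\ref{lem:unglue} decomposition precise enough to see that the shared prism and pyramid facets genuinely identify the base faces that carry the argument, and verifying that necessary flatness applies to exactly those gluing facets. That identification is what upgrades the statement "every pyramidal facet is individually completable" into the genuinely stronger statement that a single projective type is transmitted to all of them at once.
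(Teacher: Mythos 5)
Your proposal is correct and follows essentially the same route as the paper: the opening remark that pyramids are projectively equivalent iff their bases are, the base case $n=2$ via Lemma~\ref{lem:ftrans} for full transmitters, sufficiency by gluing (projective) copies via Lemma~\ref{lem:glue}, and necessity by decomposing realizations via Lemma~\ref{lem:unglue} along the necessarily flat prism and pyramid facets, with transitivity of $\projeq$ across the glued blocks. The only difference is cosmetic: the paper peels off one $\conn(4,\mc{B})$ from $\conn(n-2,\mc{B})$ at a time, whereas you view the same decomposition as a chain of $\conn(4,\mc{B})$ blocks, which is exactly the structure given in the definition.
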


\begin{proof}
First recall that two pyramids are projectively equivalent if and only if their bases are projectively equivalent.  
For $n=2$, the condition is the same as Lemma \ref{lem:ftrans} for full transmitters applied to the bases of the pyramids.  By Lemma \ref{lem:glue}, identical copies of a realization of $\conn(2,\mc{B})$ can always be glued together as in the definition of $\conn(n,\mc{B})$, so the condition is sufficient. 
By Lemma \ref{lem:unglue}, any realization of $\conn(4,\mc{B})$ is the union of two realizations of $\conn(2,\mc{B})$ intersecting along the prism of each, since prisms are necessarily flat, so the condition is necessary for $n=4$, or 3.  Likewise for $n>4$, any realization of $\conn(n,\mc{B})$ is the union of a realization of $\conn(4,\mc{B})$ and of $\conn(n-2,\mc{B})$ intersecting along a pyramid of each, and all pyramids of both pieces are projectively equivalent to the pyramid where they intersect, so the condition is necessary in this case. 
\end{proof}


While gluing connectors and transmitters together, we may in some cases want to glue along a lower dimensional face instead of a facet.  For this we can repeatedly stellate a facet containing this face until we have a facet that is an iterated pyramid over this face. We can then glue along this iterated pyramidal facet instead.  These repeated stellations are equivalent to gluing a single polytope to the facet, which we call an \df{adapter} \cite[p.\ 149]{richter1996realization}. 
We define the adapter inductively. 
\[ 
\adapt(\mc{F}, f) := \pyr(\mc{F}) \quad 
\text{for a facet (coatom) } f \in \mc{F}, 
\]
\[
\adapt(\mc{F}, g) := \pyr(\mc{F}) \#_\varphi \pyr(\adapt([\bot,f],g)) \quad 
\text{for } g\in \mc{F}
\]
where $f \geq g$ is a facet of $\mc{F}$ and 
$\varphi : [\bot,(f,\top)] \to [\bot, ((f,\bot),\top)]$, $\varphi(x,y) = ((x,\bot),y)$.
Note that this depends on choosing a chain of faces $g \prec \dots \prec f$, but this choice of is irrelevant, so we leave it implicit. 
We glue the adapter along the facets that have indices $a_\mc{F} = (\mc{F},\bot)$ and $a_g = (\dots(g,\top)\dots,\top)$. 
Note that the facet $a_g$ is an iterated pyramid over the face $g \in \mc{F}$, and that the base $b_g$ of the facet $a_g$ has index $(\dots(g,\bot)\dots,\bot)$ and is identified with the face $(g,\bot) < a_\mc{F}$ by the sequence of maps $\varphi$ in the inductive definition of the adapter. For a polytope $B$, let $\pyr(B) := [B;0] \cv [\vec{0};1]$. 

\begin{lem}\label{lem:adapt} 
Given a combinatorial polytope $\mc{F}$ and a face $g \in \mc{F}$, 
the completion condition for $F$, $G$ to $\mc{A} = \adapt(\mc{F},g)$ from facets $a_\mc{F}$, $a_g$ is 
that $G$ be projectively equivalent to $\pyr \circ \dots \circ \pyr \circ \face(F,g)$. 
\end{lem}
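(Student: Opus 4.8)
The plan is to prove the lemma by induction on the codimension $c$ of $g$ in $\mc{F}$, i.e.\ on $c := \dim\mc{F} - \dim\face(F,g)$. This $c$ is one more than the number of gluings in the inductive definition of $\mc{A} = \adapt(\mc{F},g)$, and it is the number of pyramids one must stack, so the statement to establish is precisely that the completion condition is $G \projeq \pyr^c(\face(F,g))$ (with $c$ copies of $\pyr$).

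For the base case $c=1$, $g$ is a facet of $\mc{F}$ and $\mc{A} = \pyr(\mc{F})$ with $a_g = (g,\top)$. First I would note that every realization $A$ of $\pyr(\mc{F})$ is a genuine pyramid $A = \conv(F \cup \{v\})$ over $F := \face(A,a_\mc{F})$ with $v$ outside the affine hull of $F$, since the vertex labelled $(\bot,\top)$ lies on every facet of $A$ except the base and hence off its affine hull. Then $\face(A,(g,\top)) = \conv(\face(F,g)\cup\{v\}) = \pyr(\face(F,g))$, which gives ``only if''; conversely every realization $F$ of $\mc{F}$ extends to such a pyramid by choosing any apex off its affine hull, and since two pyramids are projectively equivalent exactly when their bases are (as recalled in the proof of Lemma~\ref{lem:cons}), any $G \projeq \pyr(\face(F,g))$ arises this way.

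For the inductive step $c\ge 2$, I would fix the facet $f \ge g$ used in the definition and write $\mc{A} = \pyr(\mc{F}) \#_\varphi \pyr(\mc{A}')$ with $\mc{A}' = \adapt([\bot,f],g)$; here $g$ has codimension $c-1$ in $[\bot,f]$, the glued facet $[\bot,(f,\top)] \cong \pyr([\bot,f])$ is a pyramid and hence necessarily flat (the cases of dimension below $3$ being trivial), and writing $a_{[\bot,f]}, a_g'$ for the input and output facets of $\mc{A}'$ we have $a_g = (a_g',\top)$ in $\pyr(\mc{A}')$ while $\varphi$ is the canonical identification carrying $\face(\cdot,f)$ onto the input facet of $\mc{A}'$ and then over to the pyramids. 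For ``only if'', given a realization $A$ of $\mc{A}$, Lemma~\ref{lem:unglue} writes $A = A_0 \cup A_1$ with $A_0$ realizing $\pyr(\mc{F})$, $A_1$ realizing $\pyr(\mc{A}')$, meeting in the glued facet $\Phi$; as in the base case $A_0$ is a pyramid over $F := \face(A,a_\mc{F})$ and $A_1$ is a pyramid over a realization $B'$ of $\mc{A}'$, and $\Phi$ is simultaneously $\pyr(\face(F,f))$ and $\pyr(\face(B',a_{[\bot,f]}))$, so that $\face(B',a_{[\bot,f]}) = \face(F,f)$ (these being the sub-facets of $\Phi$ matched by $\varphi$) and hence $\face(\face(B',a_{[\bot,f]}),g) = \face(F,g)$, while $\face(A,a_g) = \pyr(\face(B',a_g'))$. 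The induction hypothesis applied to $B'$ gives $\face(B',a_g') \projeq \pyr^{c-1}(\face(F,g))$, whence $\face(A,a_g) \projeq \pyr^c(\face(F,g))$. For ``if'', given $F$ realizing $\mc{F}$ and $G \projeq \pyr^c(\face(F,g))$, I would set $\tilde G := \pyr^{c-1}(\face(F,g)) = \pyr^{c-1}(\face(\face(F,f),g))$, so $G \projeq \pyr(\tilde G)$; the induction hypothesis furnishes a realization $B'$ of $\mc{A}'$ whose input facet is a projective copy of $\face(F,f)$ and whose output facet is a projective copy of $\tilde G$, and then the $(f,\top)$-facet of $\pyr(F)$ and the $(a_{[\bot,f]},\top)$-facet of $\pyr(B')$ are pyramids over projectively equivalent bases, hence projectively equivalent, so Lemma~\ref{lem:glue} glues them into a realization $A$ of $\mc{A}$ with $\face(A,a_\mc{F})$ a copy of $F$ and $\face(A,a_g) = \pyr(\text{copy of }\tilde G) \projeq \pyr(\tilde G) \projeq G$.

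The conceptual content is just that an adapter is a stack of pyramids and that stacking pyramids on $\mc{F}$ promotes the face $g$ to a stack of pyramids over $\face(F,g)$, so I expect the only real obstacle to be bookkeeping: tracking face labels through the (iterated) gluings, and in particular verifying that Lemma~\ref{lem:glue} can be applied so that the induced isomorphism between the glued facets is exactly the identification $\varphi$ from the definition of the adapter. This should hold because the projectivity produced by the induction hypothesis respects the labellings of $\face(F,f)$ and of the input facet of $\mc{A}'$, so it extends to the canonical $\varphi$ on the pyramids over them; but this is the point where I would be most careful to get the identifications right.
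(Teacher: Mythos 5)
Your proof is correct and takes essentially the same route as the paper, whose proof simply declares the lemma ``immediate by construction'': in any realization $A$ of $\adapt(\mc{F},g)$ one has $\face(A,b_g)=\face(\face(A,a_\mc{F}),g)$ and $\face(A,a_g)=\pyr\circ\dots\circ\pyr\circ\face(A,b_g)$, i.e.\ the adapter is a stack of pyramids. Your induction on the codimension, invoking Lemmas~\ref{lem:glue} and~\ref{lem:unglue} and checking both directions and the label identifications, merely spells out the details the paper leaves implicit.
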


\begin{proof}
This is immediate by construction.  For a realization $A$ of $\mc{A}$, 
$\face(A,b_g) = \face(\face(A,a_\mc{F}),b_g)$ and $\face(A,a_g) = \pyr \circ \dots \circ \pyr \circ \face(A,b_g)$. 
\end{proof}

\subsection{Stamp of the Cube} 

We now have a way to combine the completion conditions of several polytopes' faces in a single polytope.  As a simple example, we construct our first stamp, the stamp of the unit $d$-cube.  In the general stamp construction, we will use the stamp of the cube as a scaffolding to which we fix points. 
We denote the unit $d$-cube by $\cube^d := [0,1]^d$; we may omit $d$ for brevity.   
Let $\labl(\cube) = \{0,1,*\}^d \cup \{\bot\} $ where 
\[ \face(\cube,c) = \left\{x \in \cube : x_i = c_i \text{ if } c_i \in \{0,1\} \right\}. \]

The \df{unit cube stamp} $\mc{S}_\subcube = \mc{S}_{\subcube^d}$ is constructed by gluing posets defined in the above sections together as illustrate in Figure~\ref{glue:cubestamp}. 
$\mc{S}_{\subcube}$ consists of a connector $\mc{C}_{\subcube} = \conn(d\!+\!1,\cube)$ glued to $\pyr(\mc{T}_i)$ for each $i=1,\dots,d$ where $\mc{T}_i = \tent(\cube,\chi_i)$ and
\[\chi_i(c) = \left\{\begin{array}{ll}
+ & c_i = 1 \\
- & c_i = 0 \\
0 & c_i = *. \\
\end{array}\right.\]
Specifically, one of the connector's pyramidal facets is glued to $\pyr(\mc{T}_i)$ along the pyramid over the base of $\mc{T}_i$.  Note that $\mc{T}_i$ is an example of a full transmitter, but is not used in that capacity here. 
One pyramidal facet of the connector remains unglued. 
Let $f_\subcube$ be the base of this unglued pyramidal facet of $\mc{C}_{\subcube}$ in $\mc{S}_{\subcube}$. 


\begin{figure}[h!]
\begin{center}

\begin{tikzpicture}
 \node at (0,0) [draw] (cc) {$\mc{C}_{\subcube^d}$};
 \node at (1.3,-1.1) [draw] (twi) {$\pyr(\mc{T}_i)$} ;
 \node at (3.2,-1.1) {$ i=1, \cdots, d$};
	\node at (-2,0) [draw] (u) {$\mc{S}_{\subcube^d}$};
	\node at (-1,0) {\scalebox{1.5}{$:=$}};
 
 \draw (0,.5) -- (cc) -- (3.7,0)
  (1.3,0) -- (twi)
  (2.3,0) -- +(0,-.5)
  (3.3,0) -- +(0,-.5)
  (u) -- (-2,.5);

\end{tikzpicture}

\caption{Gluing diagram for the stamp the unit cube.}\label{glue:cubestamp}
\end{center}
\end{figure}
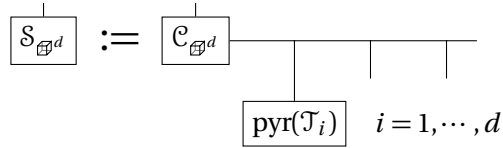

\begin{lem}\label{lem:unit}
The completion condition for $F$ to $\mc{S}_{\subcube^d}$ from the facet $f_{\subcube}$ is that $F$ be projectively equivalent to the unit $d$-cube, $F \projeq \cube$. 
\end{lem}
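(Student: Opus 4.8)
The plan is to realize $\mc{S}_{\subcube^d}$ as the union of the connector $\mc{C}_{\subcube}=\conn(d+1,\cube)$ and the pyramids $\pyr(\mc{T}_i)$, extract the completion condition from this decomposition, and then analyse the resulting geometric constraint on $F$. First I would apply Lemma~\ref{lem:glue} and Lemma~\ref{lem:unglue} along the facets of type $\pyr(\cube)$ used in the gluing: these are pyramids of dimension $d+1\ge 3$ (for $d\ge 2$; the cases $d\le 1$ are immediate, since every combinatorial $d$-cube is then already a projective $d$-cube), hence necessarily flat, so a realization $S$ of $\mc{S}_{\subcube}$ with $\face(S,f_{\subcube})=F$ exists if and only if there is a realization of $\mc{C}_{\subcube}$ whose facet at $f_{\subcube}$ is $F$ together with realizations of each $\pyr(\mc{T}_i)$ agreeing projectively with the connector along the shared $\pyr(\cube)$-facet. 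By Lemma~\ref{lem:cons} the completion condition for the connector alone is that all $d+1$ of its pyramidal facets be pairwise projectively equivalent; since the base of the $i$-th such facet is identified by the gluing with the base of $\mc{T}_i$, this says exactly that $\face(S,\text{base of }\mc{T}_i)\projeq F$, and a realization of $\pyr(\mc{T}_i)$ over such a base exists iff $\mc{T}_i$ itself is realizable with that base. Hence the completion condition for $F$ to $\mc{S}_{\subcube}$ is: $F$ has combinatorial type $\cube$, and for each $i=1,\dots,d$ the tent $\mc{T}_i=\tent(\cube,\chi_i)$ has a realization whose base is a projective copy of $F$ (the copies for distinct $i$ being unrelated).

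Next I would translate the $i$-th tent condition using Lemma~\ref{lem:tent} and the normal-cone description of visibility from its proof: it holds iff there is a projective copy $\tilde F_i$ of $F$ and a direction $u_i\in\mb{S}^{d-1}$ with $\vis(\infty u_i,\tilde F_i,c)=\chi_i(c)$ for every face $c$ of $\cube$. I claim this is equivalent to the projective-invariant statement that the $2^{d-1}$ lines spanned by the ``direction-$i$'' edges of $F$ — the edges $c$ with $c_i=*$ — are concurrent. For the forward direction, a face $c$ with $c_i=*$ lies in no facet $\{c_i=0\}$ or $\{c_i=1\}$, so its normal cone is spanned by normals of facets $\{c_j=b\}$ with $j\ne i$; the requirement $\chi_i(c)=0$ forces this cone to be orthogonal to $u_i$, and since $\cube$ is simple the normal cone of an edge spans the hyperplane orthogonal to the edge direction, so that edge direction is $\pm u_i$; thus all direction-$i$ edges of $\tilde F_i$, hence of $F$, are parallel, i.e.\ concurrent at infinity. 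Conversely, if the direction-$i$ edge lines of $F$ meet at a point $q_i$, a projectivity sending $q_i$ to the horizon makes those edges parallel to some $u_i$; every facet $\{c_j=b\}$ with $j\ne i$ then contains a line parallel to $u_i$, while by boundedness neither $\{c_i=0\}$ nor $\{c_i=1\}$ does, and a direct check of normal cones gives $\vis(\infty u_i,\cdot,c)=\chi_i(c)$ for all $c$.

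Finally I would deduce $F\projeq\cube$ from ``for all $i$ the direction-$i$ edges of $F$ are concurrent.'' Fix a vertex $v$ of $F$, with its $d$ incident edges along the $d$ edge-classes; the concurrency point $q_i$ lies on the line through $v$ in the $i$-th edge direction, and because the $d$ edge directions at a simple vertex are linearly independent the points $v,q_1,\dots,q_d$ are in general position, so there is a projectivity $\pi$ with $\pi(v)=\vec{0}$ and $\pi(q_i)=\infty e_i$. Then every direction-$i$ edge of $\pi(F)$ is parallel to $e_i$, simultaneously for all $i$; consequently flipping the $i$-th entry of a vertex index moves the vertex by a multiple of $e_i$, so the $j$-th coordinate of a vertex of $\pi(F)$ depends only on the $j$-th entry of its index. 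Hence $\pi(F)$ is a box, which after a diagonal affine map becomes $\cube$ with matching labels; $\pi$ is bounded on $F$ since its image is bounded, so $F\projeq\cube$. The converse inclusion is clear: the standard $\cube$ has all direction-$i$ edges parallel, so its tent conditions hold. Combining the three steps, the completion condition for $F$ to $\mc{S}_{\subcube}$ is exactly $F\projeq\cube$.

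The step I expect to be the main obstacle is the equivalence in the second paragraph between the tent conditions and concurrency of the edge lines: it requires care with the normal-cone characterisation of $\vis$ when the viewpoint is on the horizon, with the use of simplicity of the cube, and with ruling out (via boundedness) that one of the facets $\{c_i=0\},\{c_i=1\}$ accidentally becomes parallel to $u_i$. The reductions in the first paragraph are bookkeeping with the gluing lemmas, and the coordinate-factoring argument in the last paragraph is short once the concurrency statement is available.
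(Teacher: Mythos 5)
Your proposal is correct and follows essentially the same route as the paper's proof: unglue along the pyramidal facets and apply the connector lemma to reduce to the $d$ tent conditions, translate each tent condition into concurrency of the lines spanned by the direction-$i$ edges, and then use a vertex together with the $d$ concurrency points to normalize $F$ projectively to a cube. You in fact give more detail than the paper for the tent-to-concurrency step; the only soft spot (shared with the paper's own argument, which likewise takes the span of the points $\infty_1,\dots,\infty_d$ as the horizon without comment) is that both proofs tacitly assume the hyperplane spanned by the concurrency points misses $F$, so that the normalizing projectivity is bounded on $F$.
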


For the proof we will make use of the fact that the unit cube determines a coordinate system for an affine part of real projective space. 
Recall that a choice of origin $\vec{0}$, linear basis vectors $e_i$, and horizon $h_\infty$ form a basis in a projective space,
which determines a coordinate system.

\begin{proof}[Proof of Lemma \ref{lem:unit}]
In the construction of $\mc{S}_{\subcube}$ we only glue along pyramids, which are necessarily flat, so
by Lemmas~\ref{lem:glue} and~\ref{lem:unglue}, $F$ can be completed from $f_{\subcube}$ to $\mc{S}_{\subcube}$ if and only if we can realize each of the facets that we glue along such that they satisfy the completion condition of each of the pieces and $F$ is the realization of the specified face $f_{\subcube}$.  By Lemma~\ref{lem:cons}, this is equivalent to $F$ satisfying the completion condition to each transmitter $\mc{T}_i$ from its prismoid. 

We claim that the combined completion conditions for $F \combeq \cube$ from the prismoid to $\mc{T}_i$ for $i=1,\dots,d$ are equivalent to $F \projeq \cube$. 
Denote the indices of the facets of $\cube$ by $f_{i,x} = c$ for $x =0,1$ where $c_i = x$ and $c_j = *$ for $j \neq i$. 
By Lemma~\ref{lem:tent}, the completion condition to $\mc{T}_i$ is the following.  The set of lines spanned by an edge connecting the facets $f_{i,0}$ and $f_{i,1}$ of $F$ all meet at a common point; let $\infty_i$ denote this point. 
For one direction, $F \projeq \cube$ implies these conditions, since the unit cube satisfies these conditions.
Given a realization $S$ of $\mc{S}_\subcube$ with $F = \face(S,f_{\subcube})$, we use $F$ to define a coordinate system.  
Choose the vertex where the facets $f_{1,0},\dots,f_{d,0}$ of $F$ meet to be the origin $\vec{0}$, and each of its neighboring vertices to be the standard basis vectors $e_1,\dots, e_d$, and $h_\infty:=\infty_1 \vee \dots \vee \infty_d$ to be the horizon.    This gives 
$F_{i,0} = \face(S,f_{i,0}) \subset {\{x : (x)_i = 0\}}$, since $F_{i,0}$ contains $\vec{0}$ and $e_j$ for $j\neq i$,
and gives 
$F_{i,1} = \face(S,f_{i,1}) \subset {\{x : (x)_i = 1\}}$, since $F_{i,1}$ contains $e_i$ and $\infty_j$ for $j\neq i$.  
Hence $F$ is the unit cube in this coordinate system,  
and therefore the conditions imply $F \projeq \cube$. 
\end{proof}

\subsection{Lamppost Polytopes} 

A lamppost polytope with base $B$ depends on a pair of faces of $B$ and a visibility function $\chi$.  
Like a tent, a lamppost polytope determines the visibility of the faces of its base from a certain point $p$.
We may think of $p$ as a light-source that illuminates these faces, ``the lamp.''  A lamppost polytope additionally requires $p$ to be on a line $\ell$ through the specified pair of faces, ``the lamppost;'' see Figure~\ref{fig:lamppost}.  
Formally, for a bounded poset $\mc{B}$, a function ${\chi:\mc{B} \to \{+,-,0,*\}}$, and a pair of incomparable faces $f_0, f_1 \in \mc{B}$, let 
\[ \lamp(\chi,f_0,f_1) := \trmr(\tent(\chi),\mc{B}\setminus([f_0,\top]\cup[f_1,\top]),\emptyset). \]
An \df{abstract lamppost polytope} with base $\mc{B}$ is a poset of the form $\lamp(\chi,f_0,f_1)$, and a realization is called a lamppost polytope. 
This generalizes the marvelous {``polytope~$X$''} from \cite[Section 5.4]{richter1996realization}. 

\begin{figure}[ht]
\centering

\begin{tikzpicture}[scale=1.2]

\begin{scope}[rotate=90]

\foreach \i in {0,1,...,7}
{ \path (45*\i:1) coordinate (a\i); }

\draw[ultra thick]
(a1) -- (a2)
(a7) -- (a6)
;

\coordinate (p) at (intersection cs: first line={(a0)--(a4)}, second line={(a1)--(a2)});
\coordinate (b1) at ($(a1)!4!(a2)$);
\coordinate (b2) at ($(a7)!4!(a6)$);

\draw[thin,yellow,fill=yellow!10]
(a1) -- (p) -- (a7) -- (a0) -- cycle
;

\begin{scope}
\clip
(b1) -- (a2) -- (a3) -- (a4) -- (a5) -- (a6) -- (b2) -- cycle;
\shade[inner color=blue!70!black!50,outer color=white] (a0) circle (2.4cm);
\end{scope}

\draw[thick,fill=black!6]
(a7)
\foreach \i in {0,1,...,7}
{ -- (a\i) }
;

\draw
(p) -- (-1.6,0)
;
\fill[yellow!80!orange] (p) circle (.09cm);

\end{scope}

\begin{scope}[x={(-.8cm,-.6cm)},y={(-.36cm,.48cm)},z={(.48cm,-.64cm)},shift={(4.5cm,0cm)}]

\foreach \i in {0,1} {
\foreach \j in {0,1} {
\foreach \k in {0,1} {
\coordinate (a\i_\j_\k) at ({1-2*\i},{1-2*\j},{1-2*\k});
}}}

\foreach \i in {0,1} {
\foreach \j in {0,1} {
\coordinate (b\i_\j) at (0,{.618-1.236*\i},{1.618-3.236*\j});
}}

\foreach \i in {0,1} {
\foreach \j in {0,1} {
\coordinate (c\i_\j) at ({.618-1.236*\i},{1.618-3.236*\j},0);
}}

\foreach \i in {0,1} {
\foreach \j in {0,1} {
\coordinate (d\i_\j) at ({1.618-3.236*\j},0,{.618-1.236*\i});
}}

\draw[thin,black!50]
(a0_1_0) -- (c0_1)
(d1_0) -- (a0_1_1)
(b0_1) -- (b1_1)
(c1_1) -- (c0_1) -- (a0_1_1) -- (b1_1) -- (a1_1_1)
;

\draw[ultra thick,fill=black!20]
(c1_1) -- (a1_1_0) -- (d0_1) -- (d1_1) -- (a1_1_1) -- cycle
;

\draw[fill=yellow!50,fill opacity=.4]
(a1_0_0) -- (b0_0) -- (b1_0) -- (a1_1_0) -- (d0_1) -- cycle
;

\draw[fill=yellow!40,fill opacity=.4]
(a1_0_0) -- (d0_1) -- (d1_1) -- (a1_0_1) -- (c1_0) -- cycle
;

\draw (-4.236,4.236,4.236) coordinate (e) -- (2,-2,-2);

\shade[ball color=yellow!80, white] (e) circle (.1cm); 

\draw[fill=yellow!30,fill opacity=.4]
(a1_0_0) -- (c1_0) -- (c0_0) -- (a0_0_0) -- (b0_0) -- cycle
;

\draw[fill=blue!40,fill opacity=.4]
(c0_0) -- (a0_0_0) -- (d0_0) -- (d1_0) -- (a0_0_1) -- cycle
;

\draw[ultra thick,fill=black,fill opacity=.4]
(d0_0) -- (a0_0_0) -- (b0_0) -- (b1_0) -- (a0_1_0) -- cycle
;

\draw[ultra thick,fill=black,fill opacity=.4]
(a0_0_1) -- (c0_0) -- (c1_0) -- (a1_0_1) -- (b0_1) -- cycle
;

\draw[ultra thick]
(c0_0) -- (a0_0_0)
(b1_0) -- (a1_1_0)
(a1_0_1) -- (d1_1)
;

\end{scope}

\end{tikzpicture}

\caption{Examples of possible completion conditions to a lamppost polytope for an octagon and for a dodecahedron.  Visibility is indicated by shading.}\label{fig:lamppost}
\end{figure}
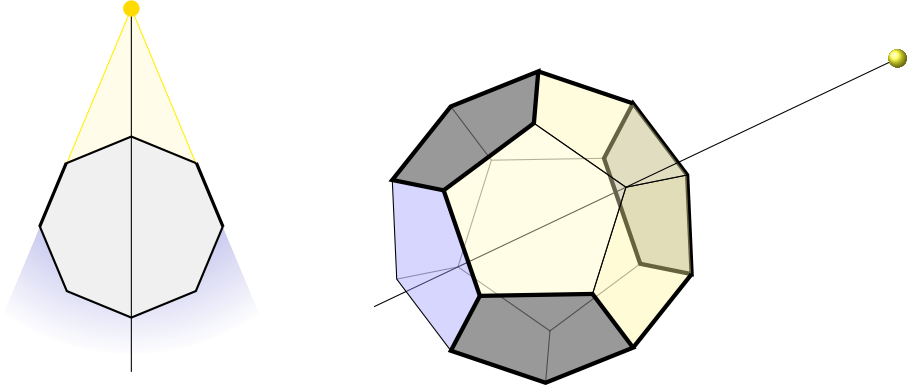

\vbox{%
\begin{lem}\label{lem:lamppost}
Given an abstract lamppost polytope $\mc{L} = \lamp(\chi,f_0,f_1)$ with base $\mc{B}$, 
the completion condition for a realization $B$ of $\mc{B}$ to $\mc{L}$ from its base is that there be a projective copy $\tilde B$ of $B$ and a point $p$ on a line $\ell$ such that $\vis(p,\tilde B,f) = \chi(f)$ for all $f\in\mc{B}$ and the line $\ell$ passes through $\face(\tilde B,f_0)^\circ$ and $\face(\tilde B,f_1)^\circ$.
\end{lem}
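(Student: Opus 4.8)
The plan is to read $\mc{L} = \lamp(\chi,f_0,f_1)$ as the abstract transmitter $\trmr(\tent(\chi),\mc{F}_0,\mc{F}_1)$ whose prismoid is the tent $\tent(\chi)$ with base $\mc{B}$ and one-edge apex, and whose first index set is $\mc{F}_0=\mc{B}\setminus([f_0,\top]\cup[f_1,\top])$, and then to apply Lemma~\ref{lem:projtrans}. By that lemma a realization $B$ of $\mc{B}$ completes to $\mc{L}$ from its base exactly when there is a projective copy $\tilde B$ of $B$ and a segment $\tilde A$ (realizing the apex edge) satisfying conditions T1--T3; I keep the existential quantifier over $\tilde A$ throughout, since choosing $\tilde A$ is part of the completion. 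So the task is to show that T1 $\wedge$ T2 $\wedge$ T3 is equivalent to the asserted condition on $\tilde B$ and a line $\ell$.

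\emph{Condition T1.} Applying Lemma~\ref{lem:tent} to the prismoid $\tent(\chi)$, condition T1 --- that the common refinement of $\nfan(\tilde B)$ and $\nfan(\tilde A)$ be dual to $\side(\tent(\chi))$ --- is equivalent to the existence of a point $p$ with $\vis(p,\tilde B,f)=\chi(f)$ for every $f\in\mc{B}$, where $p$ is the point at infinity in the direction of $\tilde A$ and therefore lies on the line $\ell:=\projcl(\tilde A)$ spanned by $\tilde A$. This already yields the ``$p$ on a line $\ell$ with $\vis(p,\tilde B,f)=\chi(f)$'' part of the conclusion; what remains, and what T2 and T3 supply, is that $\ell$ meet $\face(\tilde B,f_0)^\circ$ and $\face(\tilde B,f_1)^\circ$.

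\emph{Condition T3.} Since $\mc{F}_1$ contains no proper face of the apex edge, T3 says that neither endpoint of $\tilde A$, nor $\tilde A$ itself, can be strictly supported by a half-space containing $\tilde B$ in its interior. A separation argument shows that such a half-space at an endpoint $a$ exists iff $a$ lies outside $\conv(\tilde B\cup\{a'\})$, $a'$ the other endpoint; so T3 at both endpoints says each endpoint of $\tilde A$ is a convex combination of points of $\tilde B$ and the other endpoint, and substituting one relation into the other cancels the endpoints and shows both lie in $\tilde B$. Since $\tilde B$ is full-dimensional, $\tilde A\subset\tilde B$ also makes the ``whole segment'' case of T3 automatic. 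Thus, given T1, condition T3 is equivalent to $\tilde A\subset\tilde B$, so that $\ell$ meets $\tilde B$ in a segment containing $\tilde A$.

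\emph{Condition T2 and conclusion.} Assuming $\tilde A\subset\tilde B$, a half-space strictly supporting $\tilde B$ at $\face(\tilde B,f)$ contains $\tilde A$ in its interior iff $\tilde A\cap\face(\tilde B,f)=\emptyset$, and such a half-space exists for every $f$ since faces of polytopes are exposed; so T2 reads: $\tilde A$ misses $\face(\tilde B,f)$ iff $f\not\geq f_0$ and $f\not\geq f_1$. Letting $g_+,g_-$ be the smallest faces of $\tilde B$ containing the two endpoints of $\tilde A$, one checks $\tilde A$ meets $\face(\tilde B,f)$ iff $f\geq g_+$ or $f\geq g_-$, so T2 amounts to $[g_+,\top]\cup[g_-,\top]=[f_0,\top]\cup[f_1,\top]$; because $f_0,f_1$ are incomparable this forces $\{g_+,g_-\}=\{f_0,f_1\}$, i.e.\ the endpoints of $\tilde A$ lie in $\face(\tilde B,f_0)^\circ$ and $\face(\tilde B,f_1)^\circ$, whence $\ell$ passes through both --- the forward implication. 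For the converse, start from $\tilde B$, a line $\ell$, and a point $p$ on $\ell$ as in the statement; apply a projectivity, bounded on the data (hence preserving visibility and all incidences), taking $p$ to the point at infinity of $\ell$, and let $\tilde A$ be the subsegment of $\ell\cap\tilde B$ with one endpoint in $\face(\tilde B,f_0)^\circ$ and one in $\face(\tilde B,f_1)^\circ$ (these exist and are distinct since distinct faces have disjoint relative interiors and $\ell$ is not contained in either bounded face); then T1, T2, and T3 all hold for this $\tilde B$ and $\tilde A$. The main obstacle is exactly this last paragraph's bookkeeping --- passing from T2's ``strictly supporting half-space that contains $\tilde A$'' to the clean incidence ``$\ell$ through $\face(\tilde B,f_0)^\circ$ and $\face(\tilde B,f_1)^\circ$'', invoking incomparability of $f_0,f_1$ to pin down $\{g_+,g_-\}$, and, for the converse, picking $\tilde A$ correctly while normalizing $p$ --- whereas the separation arguments behind T3 and T2 are routine but must be run carefully because $f_0$ and $f_1$ need not be facets.
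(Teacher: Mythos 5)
Your proposal is correct and follows essentially the same route as the paper: read the lamppost as the transmitter $\trmr(\tent(\chi),\mc{B}\setminus([f_0,\top]\cup[f_1,\top]),\emptyset)$, apply Lemma~\ref{lem:projtrans}, translate T1 via Lemma~\ref{lem:tent} into visibility from a point on $\projcl(\tilde A)$, show T3 is equivalent to $\tilde A\subset\tilde B$, and then read T2 as forcing the endpoints of $\tilde A$ into $\face(\tilde B,f_0)^\circ$ and $\face(\tilde B,f_1)^\circ$. You merely spell out the separation and lattice-minimality arguments (and the normalization of $p$ to infinity in the converse) that the paper states tersely.
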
%
}\nointerlineskip

\begin{proof}
Consider the completion condition of Lemma \ref{lem:projtrans} applied to $\mc{L}$ for a realization $B$ of $\mc{B}$ and an edge $A = \conv\{a_0,a_1\}$. 
Part \ref{condTP} is equivalent to $\vis(p,B,f) = \chi(f)$ for some point $p \in \ell = \projcl(A)$ by Lemma \ref{lem:tent}. 
Part \ref{condTB0} says that the faces $f$ of the base $B$ with a strictly supporting half-space $H_f$ containing $A$ in its interior are precisely the faces that do not contain $f_0$ or $f_1$. 
Part \ref{condTB1} says that no strictly supporting half-space of a vertex $a_i$ contains $B$ in its interior. 

Part \ref{condTB1} is equivalent to $A \subset B$.  With this, part \ref{condTB0} says that $A$ intersects the faces $[f_0,\top]$ and $[f_1,\top]$ of $B$ and no other faces.  Equivalently, $a_i \in \face(B,f_i)^\circ$ possibly reindexing the $a_i$.
Thus, the condition of Lemma \ref{lem:projtrans} applied to the transmitter $\mc{L}$ is exactly the condition of the theorem.
\end{proof}

\subsection{Anchor Polytopes} 

The purpose of the anchor polytope $\anch(\alpha)$ is to fix a point on an edge of the unit square. 
By truncating a vertex of the unit square, we obtain a pentagon with two new vertices, and the anchor polytope fixes the coordinates of one of these new vertices.  
Later the anchor polytope will be used to ``anchor'' a hyperplane by forcing the hyperplane to contain this vertex. 
Before defining anchor polytopes, we state the main lemma for anchor polytopes, which gives completion conditions from a face $f_{\subpentagon} \in \anch(\alpha)$. 

\begin{lem}\label{lem:anchor}
For any algebraic number $0< \alpha < 1$, 
the completion condition for a pentagon $F$ to the anchor polytope $\anch(\alpha)$ from the face $f_{\subpentagon}$ is that $F$ be a projective copy of the unit square truncated at the vertex $(1,1)$ with a vertex at $v_\alpha = (1,\alpha)$. 
\end{lem}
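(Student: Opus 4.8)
The plan is to realize $\anch(\alpha)$ as a gluing of three kinds of pieces and then recover its completion condition from $f_{\subpentagon}$ as the conjunction of the completion conditions of those pieces. The pieces are: the unit square stamp $\mc{S}_{\subcube}$, which by Lemma~\ref{lem:unit} forces a distinguished face to be projectively the unit square $\cube = \cube^2$ and hence, exactly as in that proof, equips it with a projective coordinate frame $\vec 0, e_1, e_2, h_\infty$; a forgetful transmitter that truncates this square at the vertex $(1,1)$; and a ``circuit'' of arithmetic and bound gadgets that force the new vertex produced by the truncation to sit at $v_\alpha = (1,\alpha)$. Because everything after the stamp is phrased only in terms of the frame carried by the pentagon $F$, every condition imposed is a projective invariant of $F$, which is what makes the claimed completion condition meaningful.

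Concretely, the steps in order would be: (i) attach the forgetful transmitter $\mc{T}_{\mc{B}_0,\mc{B}_1}$ of Lemma~\ref{lem:ftrans} with $\mc{B}_1$ the square and $\mc{B}_0$ the pentagon obtained by whittling the vertex $(1,1)$, so that $\face(\tilde B_0,f)\subset\face(\tilde B_1,f)$ for every square-face $f\neq(1,1)$; thus $\tilde B_0$ is a projective truncation of the unit square at $(1,1)$, and in the frame its two new vertices lie on $\{x_1=1\}$ and $\{x_2=1\}$, the first being $v=(1,\beta)$ with $0<\beta<1$ still undetermined. (ii) Fix $m\in\mathbb{Z}[t]$ with $m(\alpha)=0$ and rationals $0<a<\alpha<b<1$ isolating $\alpha$ among the roots of $m$ in $[a,b]$ (possible since $\alpha$ is a simple root of its minimal polynomial). (iii) Assemble the von Staudt constructions for addition and multiplication, realized as lamppost polytopes via Lemma~\ref{lem:lamppost} as in Richter-Gebert's proof~\cite{richter1996realization} (a lamppost polytope forces a point on a line through two prescribed faces to have a prescribed visibility pattern, the combinatorial shadow of a von Staudt ladder): marked points carrying the integer coefficients of $m$ (built from $\vec 0$ and $e_2$ by iterated addition gadgets), the powers $\beta,\beta^2,\dots,\beta^{\deg m}$ (multiplication gadgets applied to $v$), their combination $m(\beta)$, and a final gadget forcing the point representing $m(\beta)$ to equal the frame point $\vec 0$ — the encoding of the equation $m(\beta)=0$ — together with a bound gadget (ruler constructions of the rational heights $a,b$ plus the requirement that $v$ lie between them) forcing $a<\beta<b$. (iv) Glue all these gadgets, together with the stamp-plus-truncation scaffolding, along the pyramidal facets of a connector (Lemma~\ref{lem:cons}), inserting adapters (Lemma~\ref{lem:adapt}) wherever a gadget's distinguished face is lower-dimensional; leave one pyramidal facet of the connector unglued, with base $f_{\subpentagon}$.

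Finally, reading off the result: all gluings are along pyramids and prisms of dimension $\geq 3$, which are necessarily flat, so Lemmas~\ref{lem:glue} and~\ref{lem:unglue} give that $F$ completes from $f_{\subpentagon}$ to $\anch(\alpha)$ if and only if $F$ simultaneously satisfies the completion conditions of all the glued pieces; by the construction this conjunction says exactly that $F$ is a projective truncation of the unit square at $(1,1)$ whose new vertex $v=(1,\beta)$ satisfies $m(\beta)=0$ and $a<\beta<b$, which forces $\beta=\alpha$. Conversely the unit square truncated at $(1,1)$ with a vertex at $v_\alpha=(1,\alpha)$ manifestly satisfies every piece's condition, so it is realizable as $F$; this is the completion condition asserted by the lemma. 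The main obstacle is not this bookkeeping but the construction and verification of the von Staudt arithmetic gadgets — essentially carrying out the relevant fragment of Richter-Gebert's universality construction inside the present framework — and checking that every gluing really is along a necessarily flat facet, so that the conjunction of completion conditions is exact and the combinatorial type of $\anch(\alpha)$ has nonempty realization space; the root-isolation gadget, by comparison, is a single straightforward betweenness (visibility) constraint.
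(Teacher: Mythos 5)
Your overall skeleton (attach a forgetful transmitter that truncates the square at $(1,1)$, glue arithmetic and root-isolation gadgets onto a connector, then recover the completion condition as a conjunction via Lemmas~\ref{lem:glue} and~\ref{lem:unglue}, with sufficiency by exhibiting the truncated unit square) matches the paper's architecture. The genuine gap is in the step that actually pins down the coordinate. Your items (iii)--(iv) call for gadgets doing von Staudt arithmetic on \emph{marked points of the plane}: integer coefficients built from $\vec 0$ and $e_2$, powers $\beta,\beta^2,\dots$ obtained by ``multiplication gadgets applied to $v$'', a gadget forcing the point representing $m(\beta)$ to coincide with $\vec 0$, and ``ruler constructions'' of rational heights with a betweenness constraint. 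You propose to realize all of these as lamppost polytopes, but Lemma~\ref{lem:lamppost} only forces the existence of a point with a prescribed visibility pattern lying on the line through two prescribed faces of the base polygon; it yields collinearities, not sums, products, equality of a constructed point with $\vec 0$, or betweenness of interior marked points, and no other gadget in the paper's toolkit does either. The arithmetic machinery that is actually available (Richter-Gebert's basic arithmetic polytopes, imported as Lemma~\ref{lem:arith}) operates on values represented by a \emph{computational frame}, i.e.\ on cross-ratios of the points $p_i$ where opposite edge lines of a $2k$-gon meet on a common line $\Delta$ --- not on coordinates of points inside the polygon. So your plan is missing precisely the mechanism that turns a computed algebraic value into the coordinate of the truncation vertex; deferring it as ``the relevant fragment of Richter-Gebert's construction'' does not cover it, because that construction computes with edge slopes, not vertex coordinates.

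The paper supplies this mechanism, and it is the real content of the proof of Lemma~\ref{lem:anchor}: all arithmetic, including the root isolation (encoded by choosing $b_1,b_2$ with $b_1-1<b_2\alpha<b_1$ and by the \emph{ordering} of the values represented, not by a separate betweenness gadget on the vertex), happens in a separate polytope $\mc{R}_\alpha$ whose completion condition (Lemma~\ref{lem:const}) forces an octagon to be a computational frame representing $0,\alpha,1,\infty$. The pentagon, an enneagon (the pentagon with two vertices whittled, so that it acquires auxiliary edges of slope $1$ and slope $\alpha$), and that octagon are tied together by forgetful transmitters through a connector $\conn(4,\mc{E})$, and then exactly two lamppost polytopes impose the collinearities $v_0,v_1,p_1$ and $v_0,v_\alpha,p_\alpha$; projecting $\ell_{\infty'}$ to $\Delta$ through $v_0$ gives $(v_\alpha)_2=(v_\alpha,(1,1)\,|\,(1,0),p_\infty)=(p_\alpha,p_1\,|\,p_0,p_\infty)=\alpha$. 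Incidentally, your use of the square stamp $\mc{S}_{\subcube}$ is harmless but unnecessary: the statement is only up to projectivity, and the paper obtains its coordinate system from the pentagon's own four untruncated edges together with $\Delta$ as the horizon. To close the gap you would either have to rebuild the addition and multiplication polytopes and design a slope-to-coordinate transfer of your own, or restructure the argument along the paper's lines.
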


The construction of the anchor polytope will make use of another combinatorial polytope $\mc{R}_\alpha$ 
to encode the value $\alpha$ in a computational frame. 
A computational frame is a $2k$-gon $G$ that satisfies the following. 
For each opposite pair of edges $E_i$, $E_{i'}$ of $G$ let $p_i = \projcl(E_i) \wedge \projcl(E_{i'}) \in \mb{P}^2$. 
A $2k$-gon $G$ is a \df{computational frame} when the points $p_1, \dots, p_k$ are on a line $\Delta \subset \mb{P}^2$, see Figure~\ref{fig:anchorgons} Right. 
We say that a computational frame represents the values $\alpha_1, \dots, \alpha_k \in \mb{P}^1$ when there is a projective transformation $\pi : \Delta  
\to \mb{P}^1$ such that $\pi p_i = \alpha_i$. 
Note that, as along as the images of three points are fixed, this determines the other values represented by the computational frame. 
We will often choose three such points to represent the values 
$\pi p_0 = 0$, $\pi p_1 = 1$, and $\pi p_\infty = \infty$.  
In this case, the value represented by any other point $p_i$ is given by the cross ratio, $\pi p_i = (p_i, p_{1}| p_{0}, p_{\infty})$. 
The following lemma is closely related to \cite[Theorem 8.1.1]{richter1996realization}.

\begin{lem}\label{lem:const} For any positive algebraic number $\alpha$, 
the completion condition for $G$ from $g_\alpha$ to 
$\mc{R}_\alpha$ is that $G$ be a computational frame representing $0,\alpha, 1, \infty$. 
\end{lem}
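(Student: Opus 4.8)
The plan is to follow the strategy of \cite[Theorem 8.1.1]{richter1996realization}: encode $\alpha$ by a finite system of elementary arithmetic relations over $\mb{Q}$, realize each relation as the completion condition of a small polytope assembled from the gadgets of the preceding subsections, and glue these pieces together so that the combined completion condition pins down $\alpha$. On the number-theoretic side, since $\alpha$ is a positive algebraic number there is a finite list of auxiliary values $\alpha_0 = 0$, $\alpha_1 = 1$, $\alpha_\infty = \infty$, $\alpha_2,\dots,\alpha_m = \alpha$ together with relations, each of the form $\alpha_i + \alpha_j = \alpha_k$, $\alpha_i\cdot\alpha_j = \alpha_k$, or a cyclic-order constraint in $\mb{P}^1$, whose only solution with $\alpha_0,\alpha_1,\alpha_\infty$ held fixed is the list itself. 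One produces such a list by linearizing a defining polynomial of $\alpha$ (introducing a variable for each subterm) and adjoining order constraints to single out the desired root; this is exactly the normal form used throughout \cite{richter1996realization}.

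Next I would build the gadget polytopes. Inside a computational frame --- a $2k$-gon whose $k$ opposite-edge intersection points lie on a line $\Delta$ and are to carry the values $\alpha_0,\dots,\alpha_m$ --- the classical von Staudt constructions express ``$\alpha_i+\alpha_j=\alpha_k$'' and ``$\alpha_i\cdot\alpha_j=\alpha_k$'' as finite systems of incidences and collinearities among the frame's points and a bounded number of auxiliary points and lines. Each such system is, in turn, the completion condition of a polytope built from our toolkit: tents and lamppost polytopes (Lemmas~\ref{lem:tent} and~\ref{lem:lamppost}) force the required visibilities, incidences, and ``line through two faces'' conditions; full and forgetful transmitters (Lemma~\ref{lem:ftrans}) transport frames between pieces; and adapters (Lemma~\ref{lem:adapt}) allow gluing along the faces that carry the relevant points rather than along facets. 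The cyclic-order constraints are imposed through the choice of visibility symbols $\{+,-,0,*\}$ in these gadgets, since, as Lemma~\ref{lem:tent} shows, front/back visibility records precisely the orientation data.

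I would then assemble $\mc{R}_\alpha$ by gluing every gadget to a single connector $\conn(\mc{B})$ (Lemma~\ref{lem:cons}) along a pyramidal facet whose base is combinatorially a copy of the computational frame. By Lemmas~\ref{lem:glue} and~\ref{lem:unglue} this forces all copies of the frame to coincide projectively and makes the completion condition of the one remaining pyramidal facet the conjunction of the gadgets' completion conditions --- the full list of recorded arithmetic relations and order constraints. Letting $g_\alpha$ name the distinguished face on that facet, the completion condition from $g_\alpha$ is that $G$ be a computational frame whose points represent values $\alpha_0 = 0$, $\alpha_1 = 1$, $\alpha_\infty = \infty$, $\alpha_2,\dots,\alpha_m$ satisfying that system; since the system forces $\alpha_m = \alpha$, this is equivalent to $G$ being a computational frame representing $0,\alpha,1,\infty$.

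The main obstacle is precisely the one met in \cite[\S8]{richter1996realization}: building the addition and multiplication gadgets so that their completion conditions are \emph{exactly} the intended algebraic relations, with no spurious solutions arising from degenerate configurations (auxiliary points colliding, auxiliary lines running through the horizon, or the frame itself degenerating), and checking that every gluing used is combinatorially legitimate --- the identified facets being projectively equivalent, and necessarily flat wherever Lemma~\ref{lem:unglue} is applied. This verification, rather than any single new idea, is the technical heart of the lemma.
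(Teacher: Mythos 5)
Your overall architecture matches the paper's: derive from the minimal polynomial of $\alpha$ a finite list of auxiliary values and elementary relations (plus order constraints isolating the root), realize each relation by an arithmetic gadget acting on a computational frame, and glue everything to a central connector so the conjunction of completion conditions pins down $\alpha$. But there are two concrete gaps. First, the arithmetic gadgets themselves. The paper does not construct addition/multiplication polytopes; it invokes Lemma~\ref{lem:arith}, i.e.\ Richter-Gebert's basic arithmetic polytopes $\mc{R}_{2x}$, $\mc{R}_{x+y}$, $\mc{R}_{x^2}$, $\mc{R}_{xy}$ (his Lemmas 6.2.1, 7.1.1, 7.2.1, 7.2.2), whose completion conditions are exactly computational frames representing $0<x<2x<\infty$, etc. You instead promise to rebuild von-Staudt-style gadgets from tents, lamppost polytopes and transmitters, and you explicitly defer this as ``the technical heart.'' As a proof of the lemma that deferral is the missing step: either carry out those constructions or cite the existing gadget lemmas, as the paper does. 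Relatedly, since each gadget's frame carries only the few values in its relation while the connector's pyramidal facets carry the full frame $\mc{G}$, you cannot glue the gadgets to the connector directly; the paper interposes forgetful transmitters ($\mc{T}_{i,j}$, $\mc{T}_{i,j,k}$, $\mc{T}_{i,j,k,l}$) to drop the irrelevant edges, and your sketch should say so rather than leaving ``transport frames between pieces'' implicit.

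Second, the output face. In your assembly $g_\alpha$ sits on the connector, so its base is the full $(2m{+}2)$-gon carrying all auxiliary values, and the completion condition you actually obtain is ``$G$ is a computational frame representing the entire list $x_1,\dots,x_m,\infty$ satisfying the relations'' --- a different statement, about a different combinatorial polygon, from ``$G$ is a computational frame representing $0,\alpha,1,\infty$.'' Your closing ``this is equivalent'' conceals exactly what the paper does explicitly: (i) it glues one more forgetful transmitter $\mc{T}_{u,a}$ on top of the connector whose remaining base is the octagon with edges only for $0,\alpha,1,\infty$, so that the extra values are forgotten and $g_\alpha$ really is that octagon (which is also what the anchor-polytope construction needs downstream); and (ii) for sufficiency it argues that any octagon frame representing $0,\alpha,1,\infty$ can be truncated to a full frame representing all the auxiliary values (integers up to $N$, powers of $\alpha$, monomials, partial sums, $b_2\alpha$) satisfying the relations, so that a realization of $\mc{R}_\alpha$ can actually be assembled via Lemma~\ref{lem:glue}. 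Your root-isolation device matches the paper's choice of $b_1,b_2$ with $b_1-1<b_2\alpha<b_1$, so the necessity direction is fine once the gadgets and the forgetting step are supplied; it is the reduction to the four-value frame and the converse completion argument that need to be made explicit.
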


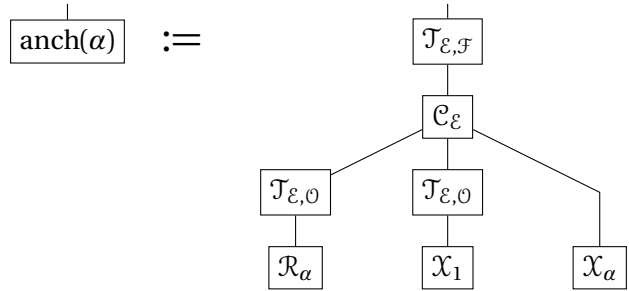
\begin{figure}[h!]
\begin{center}

\begin{tikzpicture}
 \node at (0,1) [draw] (tec) {$\mc{T_{E,F}}$};
 \node at (0,0) [draw] (cn) {$\mc{C_E}$};
 \node at (2,-2) [draw] (xa) {$\mc{X}_\alpha$};
 \node at (0,-1) [draw] (tnx) {$\mc{T_{E,O}}$};
 \node at (0,-2) [draw] (x1) {$\mc{X}_1$};
 \node at (-2,-1) [draw] (tno) {$\mc{T_{E,O}}$};
 \node at (-2,-2) [draw] (r) {$\mc{R}_\alpha$};

\draw   (0,1.5) -- (tec) -- (cn)
        (cn) -- (2,-1) -- (xa)
        (cn) -- (tnx) -- (x1)
        (cn) -- (tno) -- (r);
        
        \node at (-5,1) [draw] (a) {$\anch(\alpha)$};
        \node at (-3.5,1) {\scalebox{1.5}{$:=$}};
        
\draw (-5,1.5) -- (a);
\end{tikzpicture}

\caption{Gluing diagram for an anchor polytope of $\alpha$.}\label{glue:anchor}
\end{center}
\end{figure}

\begin{figure}[ht]
\centering

\begin{tikzpicture}	

\matrix[column sep=1.5cm]
{
\filldraw[fill=green!20] (0,0) -- (1.8462,0.4615) -- (1.6364, 1.0909) -- (1,1.75) -- (0.6154,1.8462) -- cycle;
\fill[fill=black] (1.6364, 1.0909) circle (.06) node[right,shift={(-1pt,-2pt)}] {\footnotesize $v_\alpha$} ;

  \node at (0,3.6667) (ld) {
{$\Delta$}};
  \node at (-1,2.25) (lx') {
{$\ell_{0'}$}};
  \node at (-1,-.25) (lx) {
{$\ell_0$}};
  \node at (2.3333,-1) (ly') {
{$\ell_{\infty'}$}};
  \node at (-.3333,-1) (ly) {
{$\ell_\infty$}};
  \draw (ld) -- (4.75,.5)
  (lx) -- (4,1)
  (lx') -- (4,1)
  (ly) -- (1,3)
  (ly') -- (1,3)
  (0,0) -- (2.0465,2.3023)  
  (0,0) -- (2.75,1.8333) 
;

\node at (.15,-.15) {\footnotesize $v_0$};
\node at (1.7,1.7) {\footnotesize $v_1$};
\node at (4.14,1.15) {
{ $p_0$}};
\node at (2.9,2) {
{ $p_\alpha$}};
\node at (2.2,2.46) {
{ $p_1$}};
\node at (1.2,3.15) {
{ $p_\infty$}};

&

\filldraw[fill=blue!20] (0,0) -- (0.8919,0.2230) -- (1.5,.75) -- (1.6284,1.1148) -- (1.4545,1.6364) -- (1.1141,1.7215) -- (.8,1.7) -- (0.5219,1.5656) -- cycle;

  \node at (0,3.6667) (ld) {
{$\Delta$}};

  \draw (ld) -- (4.75,.5)
  (0,0) -- (4,1)
  (1.1141,1.7215) -- (4,1)
  (0,0) -- (1,3)
  (1.6284,1.1148) -- (1,3)
  (2.75,1.8333) -- (.8,1.7)
  (2.0465,2.3023) -- (0.5219,1.5656)
  (2.0465,2.3023) -- (1.5,.75)
  (2.75,1.8333) -- (1.5,.75)
;

\node at (4.14,1.15) {
{ $p_0$}};
\node at (2.9,2) {
{ $p_\alpha$}};
\node at (2.2,2.46) {
{ $p_1$}};
\node at (1.2,3.15) {
{ $p_\infty$}};

\\
};

\end{tikzpicture}

\caption{
\textsc{Left:} A realization of the pentagon $\mc{F}$ in the construction of $\anch(\alpha)$ with collinearities shown;  
a truncated unit square with vertex at $v_\alpha=(1,\alpha)$.
\textsc{Right:} A realization of the octagon $\mc{O}$ in the construction of $\anch(\alpha)$;
a computational frame representing the values $0,\alpha,1,\infty$.}\label{fig:anchorgons}
\end{figure}
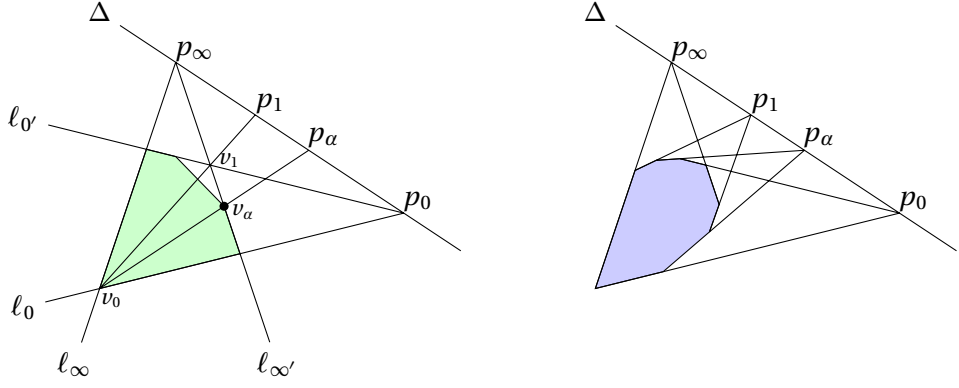

We will now construct the anchor polytope $\anch(\alpha)$ using $\mc{R}_\alpha$ and prove Lemma~\ref{lem:anchor}.  Later we will construct $\mc{R}_\alpha$ and prove Lemma~\ref{lem:const}. 
The \df{combinatorial anchor polytope} $\anch(\alpha)$ consists of 
combinatorial polytopes glued together as in Figure~\ref{glue:anchor}. 
Let $\mc{E}$, $\mc{F}$, and $\mc{O}$ respectively be a combinatorial enneagon (9-gon), pentagon, and octagon, and label their edges consecutively as follows
\[
\begin{array}{r@{\ \ \text{ by }\ \ }l@{\ }l@{\ }l@{\ }l@{\ }l@{\ }l}
\mc{E} & \infty,\ 0, & \alpha,\ 1, & \infty', & h, & 0', & \alpha',\ 1', \\
\mc{F} & \infty,\ 0, &  & \infty', & h, & 0', &  \\
\mc{O} & \infty,\ 0, & \alpha,\ 1, & \infty', & & 0', & \alpha',\ 1'. \\
\end{array} 
\]
We may consider $\mc{E}$ as a copy of $\mc{F}$ with the vertices $0 \wedge \infty'$ and $0' \wedge \infty$ whittled by the edges $\alpha$, $1$ and $\alpha'$, $1'$ respectively. 
This defines a forgetful transmitter $\mc{T}_{\mc{E},\mc{F}}$ between $\mc{E}$ and $\mc{F}$ that ``forgets'' these extra edges. 
Similarly, 
let $\mc{T_{E,O}}$ be the forgetful transmitter that ``forgets'' the edge $h$. 
Let $\mc{C}_\mc{E} = \conn(4,\mc{E})$. 
One pyramidal facet of $\mc{C_E}$ is glued to the forgetful transmitter $\mc{T_{E,F}}$, and the remaining base face of $\mc{T_{E,F}}$ is then the specified face $f_{\subpentagon}$ of Lemma~\ref{lem:anchor}.
A pyramidal facet of $\mc{C_E}$ is glued to the forgetful transmitter $\mc{T_{E,O}}$, 
which is then glued to $\mc{R}_\alpha$ so that the octagon $\mc{O}$ is identified with the computational frame of Lemma~\ref{lem:const} where the pairs of edges $\{0,0'\}$, $\{\alpha,\alpha'\}$, $\{1,1'\}$, and $\{\infty,\infty'\}$ represent the values $0$, $\alpha$, $1$, and $\infty$ respectively; Figure~\ref{fig:anchorgons} Right. 
Another pyramidal facet of $\mc{C_E}$ is glued to the forgetful transmitter $\mc{T_{E,O}}$, 
which is then glued to the lamppost polytope $\mc{X}_1 = \lamp(\chi_1,v_0,v_1)$ 
where $v_0 = \infty \wedge 0$ and $v_1 = \infty' \wedge 0'$, and 
$\chi_1(1) = \chi_1({1'}) = 0$, and $\chi_1(f) = -$ for $f \in \{\infty,0,\alpha\}$, and $\chi_1(f) = +$ otherwise. 
Note that by Lemma~\ref{lem:lamppost}, the completion condition to $\mc{X}_1$ imposes a collinearity between $v_0$, $v_1$, and $p_1$; Figure~\ref{fig:anchorgons} Left. 
Finally the last pyramidal facet of $\mc{C_E}$ is glued to the lamppost polytope 
$\mc{X}_\alpha = \lamp(\chi_\alpha,v_0,v_\alpha)$ where $v_\alpha = \infty' \wedge h$, and 
$\chi_\alpha(\alpha) = \chi_\alpha({\alpha'}) = 0$, and $\chi_\alpha(f) = -$ for $f \in \{1',\infty,0\}$, and $\chi_\alpha(f) = +$ otherwise. 
Note that by Lemma~\ref{lem:lamppost}, the completion condition to $\mc{X}_\alpha$ imposes a collinearity between $v_0$, $v_\alpha$, and $p_\alpha$; see Figure~\ref{fig:anchorgons} Left.

\begin{proof}[Proof of Lemma~\ref{lem:anchor}]
In the construction of $\anch(\alpha)$ we always glue along pyramids so by Lemmas~\ref{lem:glue} and~\ref{lem:unglue} a pentagon $F$ can be completed from $f_{\subpentagon}$ if and only if we can find respective realizations 
$E$ and $O$ of $\mc{E}$ and $\mc{O}$ such that these polygons, together with $F$ realizing $\mc{F}$, satisfy the completion conditions of the pieces. 
Let $\ell_i$ be the line supporting the edge labeled $i$, and let $p_i = \ell_{i} \wedge \ell_{i'}$.

To see the given condition is sufficient let $F$ be the unit square with $(1,1)$ truncated by a line $\ell_h$ passing though $(1,\alpha)$.  Let $E$ be a enneagon that results from $F$ when the vertices $(1,0)$ and $(0,1)$ are each truncated by a pair of lines with slope $1$ and slope $\alpha$.  Let $O$ be the octagon that results when the linear constraint imposed on $E$ by the supporting line $\ell_h$ is dropped.  With this $p_0$ is the point where all horizontal lines meet, $p_\alpha$ is on all lines with slope $\alpha$, $p_1$ is on all lines with slope 1, and $p_\infty$ is on all vertical lines.  As such $O$ is a computational frame representing $0,\alpha,1,\infty$, the points $v_0 = (0,0)$, $v_1 = (1,1)$, and $p_1$ are collinear, and the points $v_0 = (0,0)$, $v_\alpha = (1,\alpha)$, and $p_\alpha$ are collinear.  Thus, all the completion conditions of the pieces are satisfied, which means the given condition is sufficient.

To see that the given condition is necessary, suppose we have a realization of $\anch(\alpha)$. 
We start by choosing a projective coordinate system so $F$ is a truncated unit square with $(v_\alpha)_1=1$.  For this we let the region bounded by $\ell_\infty$, $\ell_0$, $\ell_{\infty'}$, $\ell_{0'}$ be the unit square with 
\[ \ell_\infty = \{(x,y):x=0\},\ \ell_0 = \{(x,y):y=0\},\ \ell_{\infty'} = \{(x,y):x=1\},\  \ell_{0'} = \{(x,y):y=1\} .\]  
The polytope $\mc{R}_\alpha$ forces the points $p_0$, $p_\alpha$, $p_1$, and $p_\infty$ to be on a line $\Delta$, which is the horizon in this coordinate system, and $\mc{R}_\alpha$ forces $(p_\alpha,p_1|p_0,p_\infty)=  \alpha$. 
The lamppost polytope $\mc{X}_1$ forces $v_0 = (0,0)$, $v_1 = (1,1)$, $p_{1}$ to be collinear, and $\mc{X}_\alpha$ forces $v_0 = (0,0)$, $v_\alpha$, $p_\alpha$ to be collinear. 
As a consequence of these collinearities, the projection from $\ell_{\infty'}$ to $\Delta$ through $v_0$ sends $(1,0)$, $v_\alpha$, $(1,1)$, $p_\infty$ to $p_0$, $p_\alpha$, $p_1$, $p_\infty$ respectively, which implies
\[ (v_\alpha)_2 =(v_\alpha,(1,1)|(1,0),p_\infty) =(p_\alpha,p_1|p_0,p_\infty)=  \alpha.  \]
Thus, the condition of the theorem is necessary. 
\end{proof}

To construct $\mc{R}_\alpha$ we use certain combinatorial 4-polytopes that encode the basic arithmetic relations (addition and multiplication) in a computational frame through its completion condition. 
The construction of $\mc{R}_\alpha$ will be 
similar to that of the polytope in \cite[Theorem~8.1.1]{richter1996realization}, where the following \df{basic arithmetic polytopes} were introduced in order to encode a system of polynomials in a combinatorial 4-polytope.  
For the construction of the four basic arithmetic polytopes and the proof of Lemma~\ref{lem:arith} see  
\cite[Lemmas 6.2.1, 7.1.1, 7.2.1, and 7.2.2]{richter1996realization}.

\begin{lem}\label{lem:arith}
There exist combinatorial 4-polytopes $\mc{R}_{2x}$, $\mc{R}_{x+y}$, $\mc{R}_{x^2}$, $\mc{R}_{xy}$ that each have a pyramidal facet with base $g$ such that 
the completion condition of $G$ from $g$ to each of these is that there exist $x,y,z \in \mb{R}$ such that $G$ is a computational frame representing the following sequences values:
\[ \begin{array}{l@{\,}l@{\ \text{ for }\ }l}
 0 < x < 2x  & < \infty & \mc{R}_{2x}, \\
 0 < x < y < x+y  & < \infty & \mc{R}_{x+y}, \\
 0 < 1 < x < x^2 & < \infty & \mc{R}_{x^2}, \\
 0 < 1 < x < y < x y  & < \infty & \mc{R}_{xy}. \\
\end{array}  \]
\end{lem}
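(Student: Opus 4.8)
The plan is to follow Richter-Gebert's encoding of field arithmetic into incidence configurations, realizing each required incidence as the completion condition of a lamppost polytope and conjoining these with a connector. I describe the case of $\mc{R}_{x+y}$; the polytopes $\mc{R}_{2x}$, $\mc{R}_{x^2}$, $\mc{R}_{xy}$ are handled the same way, using the appropriate classical geometric construction of addition or multiplication on a line in place of the addition of $x$ and $y$. Let $\mc{G}$ be a combinatorial $2k$-gon (with $k=5$ here) whose edges are labelled cyclically by the symbols $0, x, y, x{+}y, \infty$ and their primed copies, arranged so that an edge $E_v$ and its primed copy $E_{v'}$ are opposite; in any realization $G$ the point $p_v := \projcl(E_v)\wedge\projcl(E_{v'})$ is the value that $G$, viewed as a prospective computational frame, assigns to $v$. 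The target is a combinatorial $4$-polytope with a pyramidal facet over $\mc{G}$ whose completion condition from that facet asserts the existence of reals $0 < x < y < z < \infty$ with $z = x+y$ such that $G$ is a computational frame representing $0, x, y, z, \infty$.

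This condition is a finite conjunction of collinearities of two flavours. Some say that $\mc{G}$ is a frame at all, namely that $p_0, p_x, p_y, p_z, p_\infty$ all lie on a common line $\Delta$; the rest are the incidences of a von Staudt addition configuration on $\Delta$ --- a small fixed arrangement of auxiliary points and lines which, once $p_0, p_\infty$ and the summands $p_x, p_y$ lie on $\Delta$, forces $p_z$ to be their sum in the affine coordinate that $p_0, p_\infty$ impose on $\Delta$. Each individual statement ``three prescribed points are collinear'' can be written as the completion condition of a lamppost polytope $\lamp(\chi, f_0, f_1)$ over $\mc{G}$, exactly in the manner of the anchor construction: choosing $\chi$ to vanish on the edge pairs whose supporting lines must meet at the lamp point pins that lamp point to the desired intersection (just as $\mc{X}_1$, $\mc{X}_\alpha$ pinned their lamp points to $p_1$, $p_\alpha$), and choosing $f_0, f_1$ and the signs of $\chi$ on the other edges forces the line carrying the lamp point to pass through the two remaining prescribed points while keeping the configuration convexly realizable. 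The strict ordering $0 < x < y < z < \infty$ falls out of the cyclic order of the edge labels together with these sign choices.

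The pieces are assembled as in Figure~\ref{glue:anchor}. If $m$ lamppost polytopes are used, take a connector with $m{+}1$ pyramidal facets over $\mc{G}$, inserting adapters (Lemma~\ref{lem:adapt}) and iterating the pyramid operation wherever a lamppost must be glued along a proper face of $\mc{G}$ rather than along a pyramidal facet, glue the $m$ lampposts onto $m$ of the pyramidal facets, and let $g$ be the base of the remaining pyramidal facet. Because every gluing here is along a pyramid, and pyramids are necessarily flat, Lemmas~\ref{lem:glue}, \ref{lem:unglue} and~\ref{lem:cons} give that the completion condition of the result from $g$ is the conjunction of the completion conditions of the $m$ lampposts, which by construction is the statement above. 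A lamppost polytope over a polygon is a $4$-polytope, and gluing and adapting $4$-polytopes produces $4$-polytopes, so $\mc{R}_{x+y}$ --- and likewise $\mc{R}_{2x}$, $\mc{R}_{x^2}$, $\mc{R}_{xy}$ --- is a combinatorial $4$-polytope.

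The work, and the main obstacle, is the geometry of laying out these configurations: one must choose the von Staudt arrangement and the frame-defining collinearities so that $\mc{G}$, together with the auxiliary polygons obtained from it by whittling (which arise along the transmitters splicing the various polygon types, just as $\mc{E}$, $\mc{F}$, $\mc{O}$ did for $\anch(\alpha)$), stays convex in every realization; so that all the lamp points lie in the prescribed open visibility regions at once; so that the cyclic edge labelling genuinely forces the ordering $0 < x < \cdots < \infty$ and not a permutation of it; and so that no unintended incidence, hence no spurious polynomial relation, is forced. Verifying this, and that the posets obtained are face lattices of polytopes, is precisely the content of \cite[Lemmas 6.2.1, 7.1.1, 7.2.1, and 7.2.2]{richter1996realization}, whose arguments carry over to the present setting essentially verbatim.
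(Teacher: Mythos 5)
Your proposal is essentially the paper's own treatment: the paper proves this lemma purely by deferring to \cite[Lemmas 6.2.1, 7.1.1, 7.2.1, and 7.2.2]{richter1996realization}, and your argument likewise places all of the real geometric content (convex realizability of the configurations, the forced ordering of the represented values, absence of spurious incidences, and that the glued posets are face lattices) on exactly those cited lemmas. The preliminary sketch via lampposts, connectors, adapters and forgetful transmitters is a reasonable gloss on how such gadgets are assembled in the style of the anchor construction, so the only caveat is that the load-bearing step remains the citation, just as in the paper.
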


We can apply certain affine transformations to $\mb{P}^1$ and make substitutions to change the values represented by a computational frame.  For example, if we apply $\phi(t) = t-x$ to the values represented by the computational frame of $\mc{R}_{2x}$, and make the substitution $x' = \phi(2x)$, we get a computational frame representing $-x' < 0 < x' < \infty$.  In this way, we can use the four basic arithmetic polytopes to represent the respective inequalities and algebraic relations in the following table, together with the value $\infty$.  
Each row below a basic arithmetic polytope in Table \ref{tab:arith} lists a possible relation imposed by that polytope with values given in increasing order from left to right. 

\[
\begin{array}{ccc|cccc|cccc|ccccc}
\multicolumn{3}{c|}{\mc{R}_{2x}} & \multicolumn{4}{c|}{\mc{R}_{x+y}}
 & \multicolumn{4}{c|}{\mc{R}_{x^2}} & \multicolumn{5}{c}{\mc{R}_{xy}} \\
\hline
0 & x & 2x & 0 & x & y & x{+}y & 0 & 1 & x & x^2 & 0 & 1 & x & y & xy \\
-x & 0 & x & x & 0 & x{+}y & y & 0 & x^{-1} & 1 & x & 0 & x & 1 & xy & y \\
2x & x & 0 & x & x{+}y & 0 & y & 0 & x^2 & x & 1 & 0 & x & xy & 1 & y \\
 & & & x{+}y & x & y & 0 & & & & & 0 & xy & x & y & 1 \\
\end{array}
\]

\captionof{table}{Possible algebraic relations and inequalities imposed by each basic arithmetic polytope.
}\label{tab:arith}

\bigskip

We will combine the completion conditions of basic arithmetic polytopes to force a computational frame to represent $0,\alpha,1,\infty$ along with several other values.  We then use a forgetful transmitter to ``forget'' these extra values. 
Let $p(x) = \sum_{t=0}^n c_t x^t$ be the minimal polynomial of $\alpha$, and let $b_1,b_2 \in \mb{N}$ such that $\alpha$ is the only real root of $p$ that satisfies $b_1-1 < b_2\alpha < b_1$. 
Let $\{x_1< \dots < x_m\}$ consist of the following values: 
the integers from 0 to $N = \max\{b_i,|c_t|\}$, the powers of $\alpha$ up to $\alpha^n$, the absolute values of all monomials $|c_t \alpha^t|$, the values of all partial sums $\sum_{s=1}^t c_s \alpha^s$ for $c_t \neq 0$, and $b_2\alpha$. 
We now define collections of pairs of indices $\mathbf{R}_{2x}$, $\mathbf{R}_{x^2}$ and triples of indices $\mathbf{R}_{x+y}$, $\mathbf{R}_{xy}$ corresponding to arithmetic relations that determine the values $x_i$. 
\[ 
\begin{array}{r@{\ \in \ }l@{\ \text{ for }\ }r@{\ }ll}
\{i,j\} & \mathbf{R}_{2x} & (x_i,x_j) & =(1,\ 2), \\
\{i,j,k\} & \mathbf{R}_{x+y} & (x_i,x_j,x_k) &=(1,\ t,\ t+1), & t \in \{2,\dots, N-1\} \\
\{i,j,k\} & \mathbf{R}_{xy} & (x_i,x_j,x_k)&=(b_2,\ \alpha,\ b_2\alpha), & b_2 \neq 1 \\
\{i,j\} & \mathbf{R}_{x^2} & (x_i,x_j)&=(\alpha,\ \alpha^2), \\
\{i,j,k\} & \mathbf{R}_{xy} & (x_i,x_j,x_k)&=(\alpha,\ \alpha^{t},\ \alpha^{t+1}), & t \in \{2,\dots, n-1\} \\
\{i,j,k\} & \mathbf{R}_{xy} & (x_i,x_j,x_k)&=(|c_t|,\ \alpha^{t},\ |c_t\alpha^{t}|), & c_t \neq 0, t<n \\
\{i,j,k\} & \mathbf{R}_{x+y} & 
(x_i,x_j,x_k)&=(|c_{t}\alpha^{t}|,\ \sum_{s=1}^{t-1} c_s\alpha^{s},\ \sum_{s=1}^{t} c_s\alpha^{s}), & c_t \neq 0, t<n \\
\{i,j\} & \mathbf{R}_{2x} & (x_i,x_j)&=(c_{n}\alpha^{n},\ -c_n\alpha^n = \sum_{s=1}^{n-1} c_s\alpha^{s}). \\
\end{array}
\]
Let $\{o,a,u\} \subset \{1,\dots,m\}$ for which $x_{o} = 0$, $x_{a} = \alpha$, and $x_{u} = 1$. Note that since $p$ is the minimal polynomial of $\alpha$, no partial sum vanishes, $\sum_{s=1}^{t} c_s\alpha^{s} \neq 0$, so $o \nin \bigcup \mathbf{R}_{x+y}$.  Also $o \nin \bigcup \mathbf{R}_{2x}$ and $o,u \nin \bigcup \mathbf{R}_{xy} \cup \bigcup \mathbf{R}_{x^2}$.

In the construction of $\mc{R}_\alpha$, we will glue basic arithmetic polytopes along computational frames according to the above collections.  
Every computational frame will contain a pair of edges that represent $\infty$, and these pairs will always be identified when gluing.  The order of the other values represented then determines the gluing, and also determines the relation imposed by each basic arithmetic polytope. 
For example, if $\mc{R}_{x+y}$ is glued along a computational frame representing $x_i < x_j < x_o = 0 < x_k < \infty$, then the arithmetic relation imposed on these values is $x_j = x_i+x_k$, since 0 is the third value; see row 3 of column $\mc{R}_{x+y}$ in Table \ref{tab:arith}.

Let $\mc{G}$ be a combinatorial polygon with $2m+2$ edges indexed consecutively $1,\dots,m,\infty,1',\dots,m',\infty'$, and $\mc{C}_\mc{G} = \conn(\mc{G})$. 
Let $\mc{T}_{i,j}$ be a forgetful transmitter between $\mc{G}$ and an octagon with edges indexed by  
$\{o,o',\infty,\infty',i,i',j,j'\}$, and 
let $\mc{T}_{i,j,k}$ and $\mc{T}_{i,j,k,l}$ be similarly defined forgetful transmitters respectively between $\mc{G}$ and a decagon and a dodecagon.
Finally, let $\mc{R}_\alpha$ be given by the gluing diagram in Figure~\ref{glue:R}.

\begin{figure}[h]
\centering

\begin{tikzpicture}
 \node at (0,1) [draw] (t0) {$\mc{T}_{u,a}$};
 \node at (0,0) [draw] (cg) {$\mc{C}_{\mc{G}}$};
 \node at (2,-1) [draw] (t1) {$\mc{T}_{i,j}$};
 \node at (2,-2) [draw] (p1) {$\mc{R}_{2x}$};
 \node at (4,-1) [draw] (t2) {$\mc{T}_{i,j,k}$};
 \node at (4,-2) [draw] (p2) {$\mc{R}_{x+y}$};
 \node at (6,-1) [draw] (t3) {$\mc{T}_{u,i,j}$};
 \node at (6,-2) [draw] (p3) {$\mc{R}_{x^2}$};
 \node at (8,-1) [draw] (t4) {$\mc{T}_{u,i,j,k}$};
 \node at (8,-2) [draw] (p4) {$\mc{R}_{xy}$};
 \node at (2,-3.5) (b1) {
$\begin{array}{c} \vdots \\ \mathbf{R}_{2x} \\ \vdots \end{array}$ };
 \node at (4,-3.5) (b2) {
$\begin{array}{c} \vdots \\ \mathbf{R}_{x+y} \\ \vdots \end{array}$ };
 \node at (6,-3.5) (b3) {
$\begin{array}{c} \vdots \\ \mathbf{R}_{x^2} \\ \vdots \end{array}$ };
 \node at (8,-3.5) (b4) {
$\begin{array}{c} \vdots \\ \mathbf{R}_{xy} \\ \vdots \end{array}$ };
 \node [left=.3 of t0] (eq) {\scalebox{1.5}{$:=$}};
 \node [left=.3 of eq, draw] (r) {$\mc{R}_{\alpha}$};

 \path (1,0) coordinate (a1)
       (3,0) coordinate (a2) 
       (5,0) coordinate (a3) 
       (7,0) coordinate (a4);

\draw (cg) -- (t0) -- +(0,.5)
      (r)  -- +(0,.5)
      (cg) -- (a4);
\foreach \i in {1,...,4}
 {
 \draw (a\i) -- +(0,-4.5)
       +(0,-3) -- +(.3,-3)
       +(0,-3.5) -- +(.3,-3.5)
       +(0,-4) -- +(.3,-4)
       +(0,-1) -- (t\i) -- (p\i);
 }

\end{tikzpicture}
\caption{Gluing diagram for $\mc{R}_\alpha$.}\label{glue:R}
\end{figure}
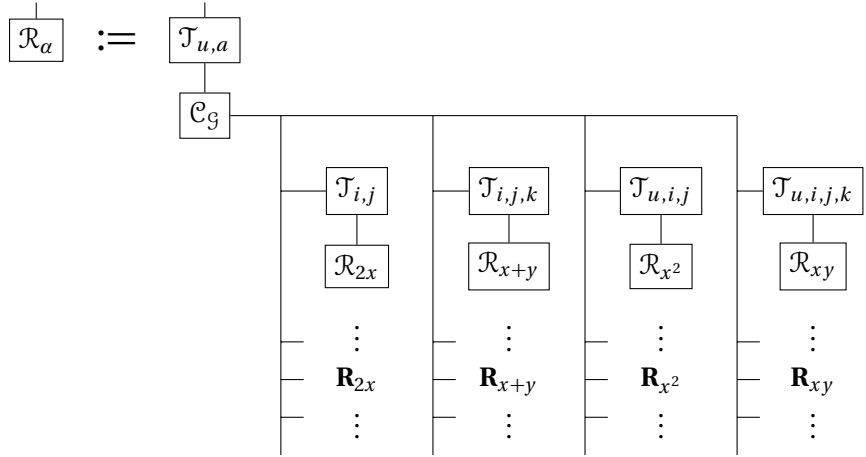

\begin{proof}[Proof of Lemma~\ref{lem:const}]
In the construction of $\mc{R}_\alpha$ we always glue along pyramids so by Lemmas \ref{lem:glue} and \ref{lem:unglue} a polygon $G_\alpha$ can be completed from $g_\alpha$ to $\mc{R}_\alpha$ if and only if each of the pieces glued together in the construction can be realized such that each pair of facets glued along is a projectively equivalent pair and the face $g_\alpha$ is a projective copy of $G_\alpha$. By Lemmas \ref{lem:ftrans} and \ref{lem:cons} and by Lemma~\ref{lem:arith}, this is equivalent to the existence of a polygon $G$ with edge supporting lines $l_1,\dots,l_m,l_\infty,l_{1'},\dots,l_{m'},l_{\infty'}$ such that the polygon bounded by $l_{o},l_{a},l_{u},l_{\infty},l_{o'},l_{a'},l_{u'},l_{\infty'}$ is $G_\alpha$, and for each set of indices in 
$\mathbf{R}_\phi$ for $\phi = 2x$, $x{+}y$, $x^2$, and $xy$, 
the polygon defined by the corresponding lines is a computational frame representing values that satisfy the corresponding algebraic relations and inequalities. 
Let $p_i = l_{i} \wedge l_{i'}$ and $\Delta = p_{o} \vee p_\infty$. 
Every index $i \neq o, \infty$ appears in at least one of the given relations, and since the corresponding polygon is a computational frame with $o,o',\infty,\infty'$ among its edges, this puts $p_i \in \Delta$.  
Therefore, $G_\alpha$ can be completed to $\mc{R}_\alpha$ if and only if there is a computational frame $G \subset G_\alpha$ representing values $x_1,\dots,x_m,x_\infty$ where $x_{o} = 0$, $x_{u} = 1$, $x_{\infty} = \infty$ such that all the relations of the $\mathbf{R}_\phi$ are satisfied, and $G_\alpha$ is the polygon defined by the lines corresponding to $x_o,x_\alpha,x_u,x_\infty$.

In any realization of $\mc{R}_\alpha$, the value $x_{a}$ must satisfy $p(x_{a}) = 0$ and $b_1-1 < b_2 x_{a} < b_1$.  These were chosen to leave only one possibility $x_{a} = \alpha$.  Hence, $G_\alpha$ must be a computational frame representing $0,\alpha,1,\infty$, so the condition of the theorem is necessary. 

Suppose $G_\alpha$ is a computational frame representing $0,\alpha,1,\infty$.  Then, $G_\alpha$ can be truncated to produce a computational frame representing values $x_1,\dots,x_m,x_{\infty}$ that satisfy all of the relations of the $\mathbf{R}_\phi$.  Hence, $G_\alpha$ can be completed to a realization of $\mc{R}_\alpha$, so the condition of the theorem is sufficient.  
\end{proof}

\subsection{Stamps} 

We now construct the stamp $\mc{S}_P$ of a polytope $P \subset \ralg^d$ from Theorem~\ref{the:paf}. 
Our goal is to fix a face $F_P = \face(S,f_P)$ up to projectivity for all realizations $S$ of $\mc{S}_P$ such that $F_P \projeq P$.  We do so by combining a sequence of completion conditions such that the conjunction of all these conditions is equivalent to the condition $F_P \projeq P$.  That is, the combined conditions determine $P$ up to projectivity. 
For every facet $f$ of $P$, we give a sequence of conditions that determine the supporting hyperplane $h_f$ of the facet $F = \face(P,f)$.  For this, we give the coordinates of a set of $d$ points in $h_f$ in general position.  
We will assume $P$ is positioned ``nicely'' in the sense 
that 
each of these points $p$ is the intersection of $h_f$ with an edge $e$ of the unit cube.  The edge $e$ determines all coordinates of $p$ except one, and to fix this last coordinate as part of the completion conditions of $F_P$, we use an anchor polytope.  An anchor polytope fixes a coordinate of a vertex of a pentagon.  The following lemmas ensures we can always find an appropriate pentagon. 

\vbox{
\begin{lem}
\label{lem:pent}
If ${H}$ is a closed half-space such that the following holds
\begin{itemize}
\item
$H$ does not contain the unit $d$-cube~ $\cube$, 
\item
$H$ intersects every facet of~ $\cube$, 
\item
no vertices of~ $\cube$ are on the boundary $h = \partial H$, 
\end{itemize}
then the polytope $Q_H := \cube\cap H$ has at least $d$ distinct vertices on the hyperplane $h$ that are each a vertex of some pentagonal face of $Q_H$. 
\end{lem}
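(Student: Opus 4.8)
The plan is to translate the statement into a question about how $h$ partitions the vertices of $\cube$, read off the combinatorial structure of $Q_H$, and then run a counting argument over the edges of $\cube$ that $h$ crosses. Write $H=\{x:\langle a,x\rangle\le b\}$ and $h=\partial H=\{x:\langle a,x\rangle=b\}$, and split the $2^d$ vertices of $\cube$ into $V_+=\{v:\langle a,v\rangle<b\}$ and $V_-=\{v:\langle a,v\rangle>b\}$; by the third hypothesis these exhaust the vertices. The first hypothesis says $V_-\ne\emptyset$, and since the minimum of $\langle a,\cdot\rangle$ over any facet of $\cube$ is attained at a vertex, the second hypothesis is equivalent to: for each coordinate $i$ and each $\epsilon\in\{0,1\}$, $V_+$ contains a vertex with $i$-th coordinate $\epsilon$ (in particular $V_+\ne\emptyset$). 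It follows that $Q_H$ is full-dimensional — otherwise $h$ would support $\cube$ from outside along a proper face and put a vertex of $\cube$ on $h$ — and that its facets are exactly $G_0:=Q_H\cap h$ together with $G_i^\epsilon:=Q_H\cap\{x_i=\epsilon\}$ ($i=1,\dots,d$, $\epsilon\in\{0,1\}$), each a genuine $(d-1)$-face (again the ``no vertex on $h$'' hypothesis forbids any of them being lower-dimensional).

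Next I would identify the vertices of $Q_H$ on $h$. A point of $h\cap Q_H$ is a vertex iff it lies on at least $d$ facets, hence on $G_0$ and on at least $d-1$ of the $G_i^\epsilon$, hence equals $v_e:=e\cap h$ for an edge $e$ of $\cube$ with one endpoint in $V_+$ and one in $V_-$ — call such an $e$ \emph{crossed}. The $d-1$ coordinate hyperplanes through $e$ together with $h$ are independent, so $v_e$ lies on exactly $d$ facets and is a \emph{simple} vertex of $Q_H$; thus the $2$-faces of $Q_H$ through $v_e$ arise by deleting two of those $d$ facets, giving either $\sigma\cap H$ for $\sigma$ the unit-square $2$-face of $\cube$ through $e$ spanning $e$'s direction and one other, or $h\cap\tau$ for $\tau$ the $3$-cube $3$-face of $\cube$ through $e$ spanning three directions (a planar section of a $3$-cube). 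Distinct crossed edges give distinct $v_e$, and $G_0$ is a $(d-1)$-polytope, so it has at least $d$ vertices; hence there are at least $d$ crossed edges. It remains to show at least $d$ of the $v_e$ are vertices of pentagonal $2$-faces — call such $v_e$ \emph{good}.

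For the pentagon criterion: $\sigma\cap H$ is a pentagon precisely when $\sigma$ has exactly one vertex in $V_-$ (a line meets the boundary of a square in two points, so it cannot separate the two diagonals of $\sigma$, and cutting off two opposite edges yields a quadrilateral, not a pentagon). Hence, writing a crossed edge as $e=[p,q]$ with $q\in V_-$: if $q$ has a second $V_+$-neighbour $q'$, the square through $e$ in that direction has only $q$ in $V_-$ (its fourth vertex must be in $V_+$, else $h$ would separate the diagonals), so $\sigma\cap H$ is a pentagon and $v_e$ is good. For the remaining crossed edges — those where $p$ is the only $V_+$-neighbour of $q$ — one analyses the $3$-cube sections $h\cap\tau$: a planar section of a $3$-cube is a triangle, quadrilateral, pentagon, or hexagon, and a short sign analysis (using that the parity $2$-colouring of a cube is not linearly separable) shows that for such an edge $h\cap\tau$ is a pentagon for \emph{some} choice of the two extra directions unless $p$ together with all vertices at Hamming distance $\le 2$ from $p$ inside the facet $\{x_i=p_i\}$ lies in $V_+$, and symmetrically $q$ together with all such vertices inside $\{x_i=q_i\}$ lies in $V_-$. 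Thus a crossed edge fails to produce a good $v_e$ only in this tightly constrained ``bad'' situation, and every non-bad crossed edge gives a vertex of a pentagonal $2$-face.

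The main obstacle is the final count: showing the bad crossed edges are few enough that at least $d$ good ones remain. The clean case is when the $V_+$/$V_-$ interface is \emph{smooth}, meaning every boundary vertex has exactly one neighbour on the other side; then there are no bad edges at all, by propagation: if $[p,q]$ (direction $i$) were bad, an induction on $|J|$ using smoothness at $p+\delta_J$ and $q+\delta_J$ (flipping the coordinates in $J$) gives $p+\delta_J\in V_+$ and $q+\delta_J\in V_-$ for every $J\subseteq\{1,\dots,d\}\setminus\{i\}$, so $V_+$ contains the whole facet $\{x_i=p_i\}$ and hence no vertex with $i$-th coordinate $1-p_i$, contradicting the translated second hypothesis; since there are $\ge d$ crossed edges, all give good vertices. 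In the general case every crossed edge at a $V_-$-vertex having two or more $V_+$-neighbours is already good, and for the remaining ``smooth-looking'' edges I would push the same propagation/engulfing estimate, bounding the number of truly bad edges against the total number of crossed edges — again using that $V_+$ meets every facet — either by a refinement of the edge-isoperimetric inequality on the cube or by induction on $d$ after peeling off a coordinate facet. Making this last bookkeeping precise is the technical heart of the argument.
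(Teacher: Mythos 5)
Your setup is sound and agrees with the paper up to a point: identifying the vertices of $Q_H$ on $h$ with the cube edges crossed by $h$, noting these are simple vertices, observing that $Q_H\cap h$ has at least $d$ vertices, and the criterion that a square $2$-face $\sigma$ of the cube yields a pentagonal face $\sigma\cap H$ exactly when one vertex of $\sigma$ lies outside $H$ (your diagonal-separation argument for the fourth vertex is correct). But the proof has a genuine gap exactly where you say it does: you never establish that at least $d$ of the crossed edges are ``good.'' The smooth case you dispose of is vacuous (your propagation shows it contradicts the hypothesis that $H$ meets every facet, with the small slip that the conclusion should be that $V_-$ swallows the facet $\{x_i=q_i\}$, not that $V_+$ swallows $\{x_i=p_i\}$), so the entire content of the lemma lives in the ``general case'' that you explicitly leave open, and the tools you gesture at for it --- classifying pentagonal sections of $3$-cube faces $h\cap\tau$, a ``propagation/engulfing estimate,'' or an edge-isoperimetric refinement --- are neither carried out nor obviously sufficient. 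As written, the argument proves only that there are at least $d$ vertices of $Q_H$ on $h$, not that $d$ of them lie on pentagons.

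For comparison, the paper closes precisely this counting step by a direct construction rather than a census of crossed edges, and it never needs the $3$-cube sections. After normalizing so that the outward normal of $H$ is in the nonnegative orthant, it considers the set $\mathbf{C}_H$ of maximal faces of the cube disjoint from $H$ (all of type $\{1,*\}^d$), and uses the fact that any vertex not belonging to some member of $\mathbf{C}_H$ must lie in $H$. For each $c\in\mathbf{C}_H$, each pair of coordinates where $c$ equals $1$, and each subset $J$ of the coordinates where every member of $\mathbf{C}_H$ equals $*$, it exhibits a square of the cube with exactly one vertex (a vertex of $c$) outside $H$, together with a crossed edge of that square; then a short bookkeeping argument chooses, for each coordinate $k=1,\dots,d$, such a datum $(c^k,J^k,i^k_x,i^k_y)$ so that the $d$ resulting edges are pairwise distinct. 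In your language, this shows directly that at least $d$ crossed edges $[p,q]$ have a second $V_+$-neighbour of $q$, i.e. are good via squares alone. If you want to salvage your route, the missing piece is exactly such a selection argument; replacing it by an isoperimetric or case-by-case analysis of $3$-cube sections would be substantially harder than the paper's construction.
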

}

Let $\mathbf{F}$ be the facets of $P$, and for each $f \in \mathbf{F}$, and 
let $H_f$ be the half-space that contains $P$ and strictly supports $F = \face(P,f)$. 

\begin{lem}\label{lem:nicepose}
There is a rational affine transformation such that for all facets $f \in \mathbf{F}$, $H_f$ satisfies the hypothesis of Lemma~\ref{lem:pent}. 
\end{lem}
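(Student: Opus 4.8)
The plan is to find a rational affine map that puts $P$ into a position where every facet-supporting half-space $H_f$ is in ``general position'' with respect to the standard unit cube, in the sense required by Lemma~\ref{lem:pent}: the half-space must miss all cube vertices, must not contain the whole cube, and must meet every facet of the cube. First I would observe that, since $P$ is an algebraic $d$-polytope, it is bounded, so after an initial rational translation and a rational scaling (which I can always arrange because the conditions I need are open, hence hold on a rational point of any open set of affine maps) I may assume $P$ lies strictly inside the open unit cube $\cube^\circ$. This immediately gives that no $H_f$ contains the cube: each $H_f$ strictly supports a facet of $P$, so its bounding hyperplane $h_f$ meets $\cube^\circ$ and therefore $H_f$ omits part of the cube. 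It also tends to give that $h_f$ meets every facet of $\cube$, but this is the condition I must be careful about and is the main obstacle; see below.

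Next I would handle the requirement that no vertex of $\cube$ lies on any $h_f$. There are finitely many facets $f$ and finitely many cube vertices, so there are finitely many ``bad'' linear conditions ``$h_f$ passes through vertex $v$''. Each such condition cuts out a proper algebraic subset (in fact an affine hyperplane's worth of constraints) in the space of affine transformations $\mathrm{GL}_d(\mb{R}) \ltimes \mb{R}^d$ near the identity. Since the rationals are dense and the complement of finitely many proper closed algebraic subsets is a nonempty open set, I can perturb the affine map by an arbitrarily small \emph{rational} amount to avoid all of them while preserving the already-achieved open conditions (namely $P \subset \cube^\circ$ and whatever has been arranged for the ``meets every facet'' condition). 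The key point here is that all the conditions I want — $P$ strictly inside the cube, $h_f$ crossing each facet hyperplane of the cube at an interior point of that facet, and $h_f$ avoiding all cube vertices — are simultaneously \emph{open} conditions on the affine transformation, so a generic choice works and genericity can be achieved rationally.

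The main obstacle is the second hypothesis of Lemma~\ref{lem:pent}: $H_f$ must intersect \emph{every} facet of $\cube$. If $P$ is merely placed deep inside the cube, the hyperplane $h_f$ need not reach all $2d$ facet hyperplanes of $\cube$ within the relevant range; for instance, a hyperplane nearly parallel to a pair of opposite cube facets might cross neither of them inside $\cube$. To force this I would use the freedom in the linear part of the affine transformation: after placing $P$ centrally, I can apply a generic small rational linear map so that each $h_f$ acquires a nonzero component in every coordinate direction, i.e.\ is not parallel to any coordinate hyperplane, and moreover so that the intercepts of $h_f$ with each coordinate axis fall strictly between $0$ and $1$ (which I can arrange by first scaling $P$ to be very small and very near the center $(\tfrac12,\dots,\tfrac12)$, then applying the generic tilt). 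Being in ``general position'' in this sense is again an open, hence rationally achievable, condition, and it guarantees $h_f$ crosses all $2d$ facets of the cube. Combining this with the vertex-avoidance perturbation from the previous paragraph — and noting that finitely many open conditions intersect in a nonempty open set, which contains a rational point — yields the desired single rational affine transformation. Finally one checks that the conclusion of Lemma~\ref{lem:pent} then holds for each $f\in\mathbf{F}$ simultaneously, which is exactly the claim.
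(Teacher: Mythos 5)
Your handling of the first and third hypotheses of Lemma~\ref{lem:pent} (the half-space not containing $\cube$, and no cube vertex on $h_f$) is fine, but the treatment of the crucial second hypothesis — that $H_f$ must meet \emph{every} facet of $\cube$ — does not work, and this is exactly the condition the whole lemma is about. Two problems. First, your stated criterion is not sufficient: a hyperplane all of whose axis intercepts lie strictly in $(0,1)$ need not cross all $2d$ cube facets. Already in the plane, the line through $(s,0)$ and $(0,t)$ with $0<s,t<1$ meets the square only in the segment joining those two points, so it touches the bottom and left edges and misses the edges $x=1$ and $y=1$ entirely; which half-plane then satisfies the hypothesis depends on which side $P$ lies, and the side containing the origin fails. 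Second, and more fundamentally, shrinking $P$ to be tiny near the center $(\tfrac12,\dots,\tfrac12)$ makes the condition \emph{harder}, not easier: if $h_f$ passes essentially through the center with outward unit normal $a$, then $H_f$ meets the facet $\{x_i=1\}$ (resp.\ $\{x_i=0\}$) of $\cube$ only if roughly $a_i\le\sum_{j\ne i}|a_j|$ (resp.\ $-a_i\le\sum_{j\ne i}|a_j|$). In dimension $2$ this forces every edge normal of $P$ to be exactly diagonal, which is impossible for a polygon in general position, and in any dimension a facet of $P$ whose normal is close to a coordinate axis (e.g.\ an axis-parallel facet, which a ``small generic rational tilt'' cannot move far) violates it, so $H_f$ simply never reaches the cube facet toward which its normal points. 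So the recipe ``place $P$ deep inside $\cube$, then perturb generically'' cannot establish the hypothesis; genericity is not the issue, non-emptiness of your target set of placements is.

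The paper resolves this by going in the opposite direction: instead of putting $P$ inside the cube, it chooses (in a generic coordinate system) rational interior points $\bar v_i,\bar w_i\in P^\circ$ close to the vertices of $P$ attaining the extreme $i$th coordinates, forms the rational box $C=\prod_i[(\bar v_i)_i,(\bar w_i)_i]$, which is slightly \emph{smaller} than the bounding box of $P$, and maps $C$ to $\cube$. After this map, $P$ protrudes slightly through the relative interior of every facet of $\cube$ (each facet hyperplane of $\cube$ contains one of the interior points $\bar v_i,\bar w_i$ of $P$), so for every $f\in\mathbf{F}$ the half-space $H_f\supset P$ automatically meets every facet of $\cube$; the remaining two hypotheses are handled by genericity of the coordinate system and of the choice of the rational box ($C\not\subset H_f$ because otherwise $h_f$ would have to be a facet hyperplane of $C$, contradicting that each extreme coordinate is attained at a unique vertex, and cube vertices avoid the $h_f$ by construction). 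If you want to keep your ``open conditions plus rational density'' framing, you must first change the geometric placement to one of this kind, where the second hypothesis holds robustly, and only then perturb rationally.
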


Assume that $P$ has full dimension, and by Lemma~\ref{lem:nicepose}, that the strictly supporting half-space of each facet satisfies the hypothesis of Lemma~\ref{lem:pent}. 
Let $Q = \cube \cap P$ and $\mc{C}_Q = \conn(\labl(Q))$ and $\mc{T}_{Q,P}$, $\mc{T}_{Q,\subcube}$ be the combinatorial types of $\trmr(Q,P)$ and $\trmr(Q,\cube)$ respectively. 
Let $\bfpentagon_f$ be the triples $(\alpha,p,\pentagon)$ where $(p,\pentagon)$ are the (vertex, pentagonal face) pairs of $Q_f = \cube \cap H_f$ implied by Lemma~\ref{lem:pent}, and $0<\alpha<1$ is the only coordinate of $p$ that is not 0 or 1.  
Let $\mc{T}_{Q,Q_f}$ be the combinatorial type of $\trmr(Q,Q_f)$.
For each triple $(\alpha,p,\pentagon) \in \bfpentagon_f$, let $\mc{A}_{Q_f,(\alpha,p,\pentagon)}$ be the combinatorial polytope obtained by gluing $\pyr^{d-2}\anch(\alpha)$ to $\adapt(\labl(Q_f),\pentagon)$ along the face $\pentagon$, so that the unit square of Lemma~\ref{lem:anchor} is identified with a face of $\cube$, and the coordinates of the vertex $p$ are fixed by the anchor polytope. 
Finally, let $\mc{S}_P$ be the combinatorial polytope obtained by gluing these pieces together according to Figure~\ref{fig:stampdia}.


\begin{figure}[h]
\begin{center}

\begin{tikzpicture}
 \node at (0,0) [draw] (tfg) {$\mc{T}_{Q,P}$};
 \node at (0,-1) [draw] (cg) {$\mc{C}_{Q}$};
 \node at (0,-2) [draw] (tc) {$\mc{T}_{Q,\subcube}$};
 \node at (0,-3) [draw] (u) {$\mc{S}_\subcube$};
 \node at (2,-2) [draw] (tgh) {$\mc{T}_{Q, Q_f}$};
 \node at (2,-3) [draw] (ch) {$\mc{C}_{Q_f}$};
 \node at (4,-4) [draw] (the) {$\mc{A}_{Q_f,(\alpha,p,\subpentagon)}$};
 \node at (4.5,-2) {$ \cdots\ \mathbf{F}\ \cdots$};
 \node at (6.5,-4) {$\cdots\ \bfpentagon_f\ \cdots$};

 \draw	(tfg) -- (cg) -- +(6,0)
  (cg) -- (tc) -- (u)
	(3.5,-1) -- +(0,-.5)
	(4.5,-1) -- +(0,-.5)
	(5.5,-1) -- +(0,-.5)
	(2,-1) -- (tgh) -- (ch) -- +(6,0)
	(5.5,-3) -- +(0,-.5)
	(6.5,-3) -- +(0,-.5)
	(7.5,-3) -- +(0,-.5)
	(4,-3) -- (the);

	\node at (-2.2,0) [draw] {$\mc{S}_P$};
	\node at (-1.2,0) {\scalebox{1.5}{$:=$}};
\end{tikzpicture}
\caption{Gluing diagram for a stamp of $P$.}\label{fig:stampdia}
\end{center}
\end{figure}
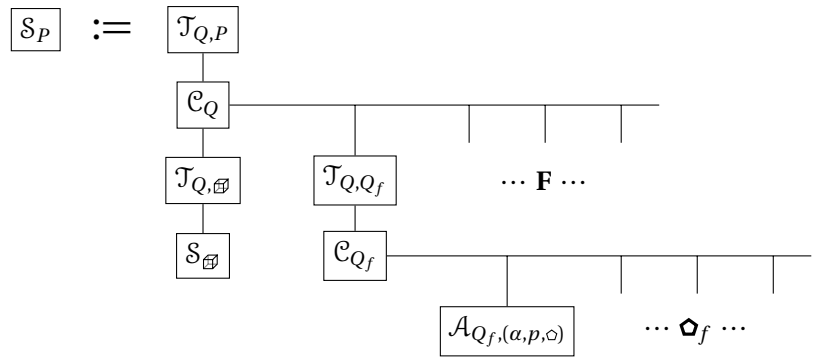

\begin{proof}[Proof of Lemma~\ref{lem:nicepose}]
First choose a coordinate system that is generic with respect to $P$, and let $v_i$ and $w_i$ be the vertices of $P$ with the respectively least and greatest $i^\text{th}$ coordinate.  Note that some vertices may be repeated.  For each such vertex $u \in \{v_1,\dots,w_d\}$, let $\bar{u} \in P^\circ$ such that $\bar{u}$ is rational, $\bar{u}$ is sufficiently close to $u$ that the $i^\text{th}$ coordinate all vertices of $P$ except $v_i$ and $w_i$ are in the interval $I_i = [(\bar{v}_i)_i,(\bar{w}_i)_i]$, and no vertex of $C = \prod_{i=1}^d I_i$ is in the supporting hyperplane of any facet of $P$. 
We now get the desired affine transformation defined by sending $C$ to $\cube$. 
\end{proof}

\begin{proof}[Proof of Lemma~\ref{lem:pent}]
We index the nonempty faces of the unit cube $\cube$ by ${\{0,1,*\}^d}$ where $0,1 \leq *$ and $0,1$ are incomparable.  For this, 0 and 1 indicate coordinates that are constant on a face and $*$ indicates the coordinates of a face that can vary.  
Let $\mathbf{C}_H$ be the set of maximal faces of $\cube$ that do not intersect $H$.  
We may assume without loss of generality that 
the outward normal vector of $H$ is in the positive orthant, which implies  
${\mathbf{C}_H \subset \{1,* \}^d}$.  
Otherwise, we can change coordinates so that this assumption holds. 

For a string $f \in \{0,1,x,y\}^d $, let $\lambda f : \{0,1,*\}^2 \to \{0,1,*\}^d$ such that $\lambda f(\tilde x, \tilde y)$ is the face of $\cube$ obtained by substituting $\tilde x,\tilde y$ for $x,y$ respectively in the string $f$. 
Observe that each vertex $p \in Q_H$ on the hyperplane $h = \partial H$ can be identified by the edge $e$ of $\cube$ that contains $p$, and each pentagonal face $\pentagon \subset Q_H$ can be identified by the square $s$ of $\cube$ that contains $\pentagon$. 
We label each such pair 
$(p,\pentagon)$ where $p \in \pentagon$ by $f \in \{0,1,x,y\}^d $ such that  
$f(1,*) = e$ and $f(*,*) = s$ are respectively the edge and square of $\cube$ that contain $p$ and $\pentagon$. 

Now 
let ${I:=\bigcap_{c\in \mathbf{C}_H} \{i:c_i=*\} }$ be the set of coordinate where all faces of $\mathbf{C}_H$ have the value $*$.  We claim that for each ${c\in \mathbf{C}_H}$ and ${J\subset I}$ and ${i_x,i_y \in \{i:c_i=1\}}$ 
with ${i_x\neq i_y}$ there is a pair $(p,\pentagon)$ labeled by 
$f = f({c,J,i_x,i_y})$ where 
\[ f_i = \left\{ \begin{array}{cl}
 x & i=i_x \\
 y & i=i_y \\
 0 & c_i=*,\ i\notin J \\
 1 & \text{else}.
\end{array} \right.  \]

We must show that exactly one of the vertices of the corresponding square $\lambda f(*,*)$ of $\cube$ is not in $H$.  Specifically, we will show that the vertex $\lambda f(1,1)$ is not in $H$ and $\face(\cube,v) \in H$ for 
$v = \lambda f(0,1)$, $\lambda f(1,0)$, $\lambda f(0,0)$. 
For every coordinate $1\leq i \leq d$, $c_i \in \{1,*\}$ and if $c_i = 1$ then $\lambda f(1,1)_i = 1$, so $\lambda f(1,1)$ is a vertex of $c$, 
which implies $\face(\cube, \lambda f(1,1)) \nin H$. 
For any other vertex $v$ of $\lambda f(*,*)$, either $v_{i_x} = 0$ or $v_{i_y} = 0$, but $c_{i_x} = c_{i_y} = 1$, so $v$ is not a vertex of $c$.  For any other $c' \in \mathbf{C}_H$, $c'\neq c$, there is some coordinate $1 \leq i \leq d$ such that $c'_i = 1$ but $c_i = *$, since the faces in $\mathbf{C}_H$ are incomparable.  Since $c'_i \neq *$ We have $i \nin I$, so $i \nin J$, which implies $v_i = 0$, so $v$ not a vertex of $c'$.  We have now that $v$ is not a vertex of any face in $\mathbf{C}$, so $\face(\cube,v) \in H$.

We now find $d$ distinct points where $h$ intersects a distinct edge of $\cube$ of the form $\lambda f(1,*)$.  Observe that for any $c \in \mathbf{C}_H$, there are always at least two coordinates $i_x, i_y$ such that $c_{i_x} = c_{i_y} = 1$, since $c$ would otherwise either be a facet of $\cube$ that is disjoint from $H$ or be the cube itself. 
For $1 \leq k \leq d$, select ${f^k=f(c^k,J^k,i_x^k,i_y^k)}$ in the following way. 
For ${k \in I}$, let ${c^k}$ be any element of $\mathbf{C}_H$ and ${i_x^k,i_y^k}$ be any distinct indices in $\{i:c^k_i=1\}$ and ${J^k=\{k\}}$.  
For ${k \notin I}$, let $c^k \in \mathbf{C}_H$ such that ${c^k_k =1}$ and ${i_y^k=k}$ and ${i_x^k} \in \{i \neq i_y^k:c^k_i=1\}$ and ${J^k=\emptyset}$.

For ${k \in I}$ and any $j \neq k$, we have $\lambda f^k(1,*)_k = 1$ but $\lambda f^{j}(1,*)_k = 0$, so $\lambda f^k(1,*)$ and $\lambda f^{j}(1,*)$ are distinct edges.  Alternatively for $k,j \nin I$ with $j \neq k$, we have $\lambda f^k(1,*)_k = *$ but $\lambda f^{j}(1,*)_k \neq *$ since $\lambda f^j(1,*)_j = *$, so again $\lambda f^k(1,*)$ and $\lambda f^{j}(1,*)$ are distinct edges.
\end{proof}

\begin{proof}[Proof of Theorem \ref{the:paf}]
First we see that $\mc{S}_P$ is always realizable.  As long as we can realize each piece such that every pair of facets we glue along is projectively equivalent, by Lemma~\ref{lem:glue} we can actually glue them together to get a realization of $\mc{S}_P$.  All of these facets are pyramids, so a pair is projectively equivalent if and only if a their respective bases, which are ridges of $\mc{S}_P$, are projectively equivalent. 
In defining $\mc{S}_P$ we start with polytopes $P$ and $\cube$, and define the combinatorics of the various projective transmitters to be realizable with projective copies of these, so the projective transmitters $\mc{T}_{Q,P}$, $\mc{T}_{Q,\subcube}$, $\mc{T}_{\subcube,Q_f}$ can be realized with the specified bases projectively equivalent to $P$, $Q$, $\cube$, $Q_f$.  The stamp of the cube can be realized with the specified ridge projectively equivalent to $\cube$, and for each pentagon in $\bfpentagon_{\bf H}$,  the anchor polytope $\anch(\alpha)$ can be realized so its specified ridge is projectively equivalent to the corresponding pentagonal face of $Q_f$. 

For the other direction we will show that in every realization $S$ the specified facet $F_P = \face(S,f_P)$ is projectively equivalent to $P$.  
Note that, since each facet we glue along to construct $\mc{S}_P$ is a pyramid, these facets are necessarily flat, which by Lemma~\ref{lem:unglue} implies $S$ can be decomposed into the union of realizations of these pieces such that each adjacent pair of pieces intersects in the facet of each where they are glued together. 
Moreover, the projective transformations implied by the completion conditions of the connectors in Lemma~\ref{lem:cons} and transmitters in Lemma~\ref{lem:projtrans} send a projective basis among the faces of one ridge to that of another ridge, so there is a unique projective transformation between these pairs ridges.  Among these, the ridge $F_\subcube = \face(S_\subcube,f_\subcube)$ of the piece realizing $\mc{S}_\subcube$ must be projectively equivalent to $\cube$.  This determines a unique projective transformation $\phi_\subcube : \projcl(F_\subcube) \to \ralg \mb{P}^d $ such that $\phi_\subcube(F_\subcube) = \cube$, 
and $\phi_\subcube$ in turn determines a unique projective transformation 
$\phi : \projcl(F_P) \to \ralg \mb{P}^d$ by composition with the projective transformations implied by Lemmas~\ref{lem:cons} and~\ref{lem:projtrans} for the intermediate pieces $\mc{T}_{Q,P}$, $\mc{C}_Q$, $\mc{T}_{Q,F_\subcube}$. 
For each facet supporting hyperplane $h_f$, $f \in {\bf F}$ of $F_P$, each of the specified points in $\bfpentagon_h$ must have the same coordinates as the corresponding point in the corresponding facet supporting hyperplane of $P$, since it is on an edge of $\cube$, which determines $d-1$ coordinates,
and appears as the specified point of an anchor polytope $\anch(\alpha)$, which by Lemma~\ref{lem:anchor} sets the remaining coordinate to $\alpha$.  Since this determines $d$ points of $h_f$ that are not contained in any affine space of lower dimension, $\phi(h_f)$ must be a supporting hyperplane of $\face(P,f)$. Hence $F_P \projeq P$.
\end{proof}


\section{Questions}
\label{question}

The most apparent question is, what further applications does the stamp have?  
Among geometric properties of polytopes that are being studied, which of these do faces inherit, and can Theorem~\ref{thrm:log_comb=proj} be applied?  Such methods have recently been employed in \cite{adiprasitoarnau2013universality}. 

The stamp fixes a face of co-dimension 2 up to projectivity, but what about co-dimension 1.  We saw in the introduction a 3-polytope does not impose any completion condition on its facets, but such a stamp may exist in higher dimensions.  Is there a $d_0$ such that for any polytope $P$ of dimension $d \geq d_0$ there is a combinatorial polytope of dimension $d+1$ such that in all realizations a specified facet is projectively equivalent to $P$? Or, are there other properties $P$ could to satisfy to guarantee that such a combinatorial $(d{+}1)$-polytope exists? 

This article was initially motivated by the question, ``Is the Hasse diagram of the face lattice of any polytope the 1-skeleton of some other polytope?''  but we have not settled that question.  Theorem~\ref{the:bai} says that if such a polytope exists, it is not the natural candidate, the interval polytope.  

A sufficient condition was already known for a polytope to have an antiprism, that some realization is perfectly centered.  Here we saw a condition that is necessary and sufficient, that some pair of realizations is balanced, but it is not immediately clear that this new condition is actually weaker.  Does there exist a combinatorial polytope that has a pair of balanced realizations, but does not have a perfectly centered realization?

We have seen a variety of polytopes with various completion conditions.  What sort of condition can be the completion condition of a face of a polytope?  Ideally, this question would be answered by giving a formal language together with a semantic interpretation that includes polytopes among its ground types that satisfies the following.  Given a set of realizations $R$ of a combinatorial polytope $\mc{P}$, there is another combinatorial polytope $\mc{Q}$ such that $R$ is the restriction of realizations of $\mc{Q}$ to a certain face, $R = \{\face(Q,f) : \labl(Q) = \mc{Q}\}$ if and only if there exists a predicate $\psi$ in this language such that $R$ is the set where the predicate holds $R = \{ P : \psi(P) = \text{True} \}$.  Of course, it would have to be possible to formulate the completion conditions already given as a predicate in such a language.  In particular it must be possible to say that a 
polytope is fixed up to projectivity.  In the other direction, any predicate of this language would have to be projectively invariant. 

\section*{Acknowledgments}

The author would like to thank Louis Theran, Igor Rivin, G\"unter Ziegler, and Andreas Holmsen 
for helpful discussions.

\bibliographystyle{plain}
\bibliography{apip}

\begin{thebibliography}{10}

\bibitem{adiprasitoarnau2013universality}
Karim~A. Adiprasito and Arnau Padrol.
\newblock A universality theorem for projectively unique polytopes and a
  conjecture of shephard.
\newblock {\em eprint}, 2013.
\newblock arXiv:1301.2960.

\bibitem{barnette1970preassigning}
David Barnette and Branko Gr{\"u}nbaum.
\newblock Preassigning the shape of a face.
\newblock {\em Pacific Journal of Mathematics}, 32(2):299--306, 1970.

\bibitem{bayer1990lawrence}
Margaret Bayer and Bernd Sturmfels.
\newblock Lawrence polytopes.
\newblock {\em Canadian Journal of Mathematics}, 42:62--79, 1990.

\bibitem{below2002complexity}
Alexander Below.
\newblock {\em Complexity of Triangulation}.
\newblock PhD thesis, ETH Z{\"u}rich, 2002.
\newblock DISS No. 14672.

\bibitem{bjorner1997antiprism}
Anders Bj\"orner.
\newblock The antiprism fan of a convex polytope.
\newblock {\em Abstracts of the AMS}, 18(1):19, 1997.
\newblock \#918-05-688.

\bibitem{broadie1985antiprisms}
Mark~N. Broadie.
\newblock A theorem about antiprisms.
\newblock {\em Linear Algebra and its Applications}, 66:99--111, 1985.

\bibitem{fukudaweibel2007fvectors}
Komei Fukuda and Christophe Weibel.
\newblock f-vectors of minkowski additions of convex polytopes.
\newblock {\em Discrete \& Computational Geometry}, 37(4):503--516, 2007.

\bibitem{princeton2008}
Timothy Gowers, June Barrow-Green, and Imre Leader, editors.
\newblock {\em The Princeton companion to mathematics}.
\newblock Princeton University Press, 2008.

\bibitem{grunbaum2003convex}
Branko Gr{\"u}nbaum.
\newblock {\em Convex Polytopes}, volume 221.
\newblock Springer-Verlag, New York, second edition, 2003.

\bibitem{kleinschmidt1976facets}
Peter Kleinschmidt.
\newblock On facets with non-arbitrary shapes.
\newblock {\em Pacific Journal of Mathematics}, 65(97):101, 1976.

\bibitem{lindstrom1971realization}
Bernt Lindstr{\"o}m.
\newblock On the realization of convex polytopes, euler's formula and
  m{\"o}bius functions.
\newblock {\em Aequationes mathematicae}, 6(2):235--240, 1971.

\bibitem{lindstrom1971problem}
Bernt Lindstr{\"o}m.
\newblock Problem p73.
\newblock {\em Aequations mathematicae}, 6:113, 1971.

\bibitem{richter1996realization}
J{\"u}rgen Richter-Gebert.
\newblock {\em Realization spaces of polytopes}.
\newblock Number 1643 in Lecture notes in Mathematics. Springer-Verlag, 1996.

\bibitem{stolfi1991oriented}
Jorge Stolfi.
\newblock {\em Oriented projective geometry. A framework for geometric
  computations}.
\newblock Academic Press, 1991.

\bibitem{tarski1951decision}
Alfred Tarski.
\newblock {\em A decision method for elementary algebra and geometry}.
\newblock University of California Press, 1951.

\bibitem{thurston1978geometry}
William~P Thurston.
\newblock {\em The geometry and topology of three-manifolds}.
\newblock Princeton University Press, 1978.

\bibitem{ziegler2007lecture}
G{\"u}nter~M. Ziegler.
\newblock {\em Lectures on Polytopes}, volume 152 of {\em Graduate Texts in
  Math.}
\newblock Springer-Verlag, New York, 2007.

\bibitem{ziegler2008nonrational}
G{\"u}nter~M. Ziegler.
\newblock Non-rational configurations, polytopes, and surfaces.
\newblock {\em The Mathematical Intelligencer}, 3(30):36--42, 2008.

\end{thebibliography}

\end{document}